\documentclass[reqno]{amsart}
\usepackage{amssymb,latexsym}
\usepackage{amsmath}
\usepackage{amscd}
\usepackage{graphicx}
\usepackage{tikz}
\usepackage{color}
\usepackage{enumerate}
\newenvironment{enumeratei}{\begin{enumerate}[\upshape (i)]}{\end{enumerate}}
\numberwithin{equation}{section}

\theoremstyle{plain}
\newtheorem{theorem}{Theorem}[section]
\newtheorem*{theorem*}{Theorem}
\newtheorem{lemma}[theorem]{Lemma}
\newtheorem{corollary}[theorem]{Corollary}

\theoremstyle{definition}
\newtheorem{definition}[theorem]{Definition}
\newtheorem{remark}[theorem]{Remark}



\DeclareMathOperator{\Aut}{Aut}
\newcommand \DAstr{DAut}
\DeclareMathOperator{\Daut}{\DAstr}
\DeclareMathOperator{\Fl}{FL}
\newcommand \fl [1] {\Fl(#1)}
\DeclareMathOperator{\Sl}{SFL}
\DeclareMathOperator{\cfree}{CF}

\DeclareMathOperator{\Sym}{Sym}
\newcommand{\alg}[1]{\mathbf{#1}}

\newcommand {\jhom} {\nu^{(\vee)}}
\newcommand {\tno} {N_{\sst{\textup{var}}}}
\newcommand {\mhom} {\mu^{(\wedge)}}
\newcommand \tbf [1] {\textbf{#1}} 
\newcommand \set[1] {\{#1\}}
\newcommand \sst [1] {\scriptscriptstyle{#1}}
\newcommand \ovl [1] {\overline{#1}}

\newcommand \dswap [1] {\delta^{\sst{\textup{sw}}}_{#1}}
\newcommand \Dirr [1] {D_{\sst{\textup{ir}}}(#1)}
\newcommand \nfromo {\mathbb N^+}
\newcommand \nfromz {\mathbb N_0}
\renewcommand \phi {\varphi}
\newcommand \swsym {symmetric}
\newcommand \restrict [2] {{#1}\kern-1pt \rceil_{\kern-1pt #2}}
\newcommand \aut [1] {{#1}^{\sst{\textup{aut}}}}
\newcommand \ketparamaut [2] {{#1}_{#2}^{\sst{\textup{aut}}}}
\newcommand \daut [1] {\overline{\aut{#1}}}
\newcommand \wcond {\textup{(W)}}

\newcommand \ineq {ineq}
\newcommand \iref [1] {\textup{(\ineq\ref{#1})}}

\newcommand \what [1] {\widehat{#1}}
\newcommand \vonal {\noalign{\hrule}}
\newcommand \ssf [1] {{\sf{#1}}}

\newcommand \aindent  {35pt}
\newcommand \bindent  {45pt}
\newcommand \cindent  {13pt}
\newcommand \kr {\kern 1.0pt}

\newcommand \url [1] {\texttt{#1}}
\newcommand \pctex [1] {}
\newcommand\red [1] {\color{red}#1\color{black}}

\title{Symmetric embeddings of free lattices into each other}

\author[G.\ Cz\'edli]{G\'abor Cz\'edli}
\email{czedli@math.u-szeged.hu}
\urladdr{http://www.math.u-szeged.hu/~czedli/}
\address{University of Szeged, Bolyai Institute, 
Szeged, Aradi v\'ertan\'uk tere 1, HUNGARY 6720}

\author[G.\ Gyenizse]{Gerg\H{o} Gyenizse}
\email{gergogyenizse@gmail.com}
\urladdr{http://gllrumdsuhg.atw.hu/}
\address{University of Szeged, Bolyai Institute, 
Szeged, Aradi v\'ertan\'uk tere 1, HUNGARY 6720}

\author[\'A.\ Kunos]{\'Ad\'am Kunos}
\email{akunos@math.u-szeged.hu}
\urladdr{http://www.math.u-szeged.hu/~akunos/}
\address{University of Szeged, Bolyai Institute, 
Szeged, Aradi v\'ertan\'uk tere 1, HUNGARY 6720}

\thanks{This research was supported by
the Hungarian Research Grant KH 126581}

\subjclass{06B25{\hfill \red{Version: May 7, 2018}}}

\keywords{Free lattice, sublattice, dual automorphism, symmetric embedding, selfdually positioned, totally symmetric embedding, lattice word problem, Whitman's condition, FL(3), FL(omega)}

\dedicatory{Dedicated to Ralph Freese and J.\ B.\ Nation on their seventieth birthdays}

\begin{document}

\maketitle

\begin{abstract}
By a 1941 result of Ph.\ M.\ Whitman, the free lattice $\Fl(3)$ on three generators
includes a sublattice $S$ that is isomorphic to the lattice $\Fl(\omega)=\Fl(\aleph_0)$ generated freely by denumerably many elements. 
The first author has recently ``symmetrized'' this classical result by constructing a sublattice $S\cong\fl \omega$ of $\fl 3$ such that $S$ is \emph{selfdually positioned} in $\fl 3$ in the sense that it is invariant under the natural dual automorphism of $\Fl(3)$ that keeps each of the three free generators fixed. Now we move to the furthest in terms of symmetry by constructing a selfdually positioned sublattice $S\cong\fl \omega$ of $\fl 3$ such that 
every element of $S$ is fixed by  all automorphisms of $\fl 3$. That is, in our terminology, we embed $\fl\omega$ into $\fl 3$ in a 
\emph{totally symmetric} way.  Our main result determines all pairs $(\kappa,\lambda)$ of cardinals greater than 2 such that  $\Fl(\kappa)$ is embeddable into $\Fl(\lambda)$ in a totally symmetric way. 
Also, we relax the stipulations on $S\cong\fl\kappa$ by requiring only that $S$ is closed with respect to the automorphisms of $\fl\lambda$, or $S$ is selfdually positioned and closed with respect to the automorphisms; we determine the corresponding pairs $(\kappa,\lambda)$ even in these two cases.  
We reaffirm some of our calculations with a computer program developed by the first author.
This program is for the word problem of free lattices, it runs under Windows, and it is freely available.
\end{abstract}

\section{Introduction and our results}

There are many nice and deep results on free lattices of the variety of all lattices. A large part of these results were achieved by Ralph~Freese and J.\,B.~Nation, to whom this paper is dedicated. Some of these results are included in \cite{fr85,fr87,frnat85,frnat16,nat82,nat84} and in the monograph Freese, Je\v zek, and Nation~\cite{fjnmonograph}, but this list is far from being complete. The monograph just mentioned serves as the reference book for the present paper.

By a classical result of Whitman \cite{whitman}, the free lattice $\Fl(\omega)=\Fl(\aleph_0)$ on denumerably many free generators  is isomorphic to a sublattice of the free lattice $\Fl(3)$ with three free generators. In fact, we know from a deep result of Tschantz~\cite{tschantz} that there are many copies of $\fl\omega$ in $\fl 3$; namely, every infinite interval of $\fl 3$ includes  a sublattice isomorphic to $\fl\omega$. 
For more about free lattices, the reader is referred to Freese, Je\v zek and Nation~\cite{fjnmonograph}.
In this paper, we embed free lattices into each other \emph{symmetrically}. 
For a free lattice $F$, 
\begin{equation}
\parbox{8cm}{$\delta=\delta_F$ will denote the \emph{natural} 
dual automorphism of $F$ that keeps the free generators fixed;}
\label{eqpbxnatduAl}
\end{equation}
it is uniquely determined.
A subset (or a sublattice) $S$ of $F$ is \emph{self\-dually positioned} if $\delta(S)=S$. Selfduality is a sort of symmetry, and a selfdually positioned sublattice is necessarily selfdual.
As the main result of \cite{czg-selfdual}, 
the first author proved that 
\begin{equation}
\parbox{6.8cm}{$\fl 3$ has a selfdually positioned sublattice that is isomorphic to  $\fl\omega$.}
\label{eqpbxflhrMgsD}
\end{equation}

Besides selfduality, there is a more general concept of symmetry, which is used even outside algebra; it is based on automorphisms. 
For a lattice $L$, let  $\Aut(L)$ denote the \emph{automorphism group} of $L$. We call a subset (or a sublattice) $S$ of $L$  \emph{\swsym{}} if $\pi(S)= \pi$ for every $\pi\in \Aut(F)$.  Also, an element $u\in L$ is a \emph{symmetric element} of  $L$ if $\set{u}$ is a \swsym{} subset of $L$. Note that there is no symmetric element in $\Fl(\omega)$. If $S$ contains only  symmetric elements of $L$, then $S$ is \emph{element-wise symmetric} in $L$.
Our key concept is the following.

\begin{definition}\label{deftotsyM}
A lattice embedding $\phi\colon L\to F$ of a lattice $L$ into a free lattice $F$
is  \emph{totally symmetric} if its range 
$\phi(L)=\set{\phi(u):u\in L}$
 is a selfdually positioned and element-wise symmetric sublattice of $F$.
\end{definition}

To expand our notation for all \emph{cardinal} numbers $\kappa\geq 3$, we denote by $\fl \kappa$ the free lattice with $\kappa$ many free generators. If $\kappa=n$ is a natural number, then we often write $\fl n$. Following the tradition, we often denote $\Fl(\aleph_0)$ by $\fl\omega$ even if $\omega$ is an \emph{ordinal} number. In order to  avoid ambiguity about natural numbers, we adhere to the notations  $\nfromo:=\set{1,2,3,\dots}$ and $\nfromz:=\set0\cup\nfromo$. The elements of $\nfromz$ are also cardinals; namely, the finite cardinal numbers. Our main result is the following.

\begin{theorem}\label{thmmain}~
\begin{enumerate}[\upshape (A)]
\item\label{thmmaina}
Assuming that $3\leq \kappa $ and  $3\leq \lambda$ are cardinal numbers, there exists a totally symmetric embedding  $\Fl(\kappa) \to \Fl(\lambda)$ if and only if $\lambda\in \nfromo$ is a natural number and  $\kappa\in\set{2k:k\in \nfromo}\cup\set{\aleph_0}$.
\item\label{thmmainb} In particular, there exists a totally symmetric embedding $\Fl(\omega) \to \Fl(3)$.
\end{enumerate}
\end{theorem}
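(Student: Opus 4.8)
I would prove the two directions of part~\ref{thmmaina} of Theorem~\ref{thmmain} separately and note that part~\ref{thmmainb} is the instance $(\kappa,\lambda)=(\aleph_0,3)$ of the ``if'' direction. For ``only if'', three facts are needed: $\kappa\leq\aleph_0$, $\lambda$ is finite, and $\kappa$ is even whenever it is finite. The first is immediate, since $\Fl(\lambda)$ is countable for finite $\lambda$ and hence contains no copy of $\Fl(\kappa)$ when $\kappa>\aleph_0$. For the second, recall that every automorphism of a free lattice permutes its free generators, so $\Aut(\Fl(\lambda))$ acts on the $\lambda$-element generating set $X$ as the full symmetric group; if $\lambda$ is infinite and $u\in\Fl(\lambda)$, then the least set of free generators over which $u$ can be expressed is finite and nonempty, hence a proper subset of $X$, so a transposition carrying one of those generators outside it moves $u$. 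Thus $\Fl(\lambda)$ has no symmetric element when $\lambda$ is infinite (extending the remark made for $\Fl(\omega)$), and since the range of a totally symmetric embedding is a nonempty, element-wise symmetric sublattice, there is no totally symmetric embedding into $\Fl(\lambda)$ at all.

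For the third fact, suppose $\phi\colon\Fl(\kappa)\to\Fl(\lambda)$ is totally symmetric with $\lambda$ finite; put $S=\phi(\Fl(\kappa))$ and write $\delta=\delta_{\Fl(\lambda)}$ and $\delta_S$ for the two natural dual automorphisms. Since $S$ is selfdually positioned, $\restrict{\delta}{S}$ is a dual automorphism of $S\cong\Fl(\kappa)$; composing with $\delta_S$ gives an automorphism of $S$, and every automorphism of a free lattice is induced by a permutation of its free generators, so $\restrict{\delta}{S}=\delta_S\circ\pi$ for a permutation $\pi$ of the free generators of $S$ (identified here with the induced automorphism). As $\delta_S$ fixes every free generator it commutes with $\pi$, and since $\delta_S^2=\delta^2=\mathrm{id}$ we get $\pi^2=(\restrict{\delta}{S})^2=\mathrm{id}$, so $\pi$ is an involution. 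The crucial point is that $\pi$ is fixed-point-free. Indeed, if $\pi$ fixes a free generator $s$ of $S$, then $\delta(s)=\delta_S(\pi(s))=\delta_S(s)=s$, so $s$ is a self-dual element of $\Fl(\lambda)$; but, by the uniqueness of canonical forms (Whitman's solution of the word problem), $\delta$ carries every element of ``proper join'' shape to one of ``proper meet'' shape and conversely, so the only self-dual elements of $\Fl(\lambda)$ are its free generators, none of which is symmetric since $\lambda\geq 3$ --- contradicting total symmetry. A fixed-point-free involution on a finite $\kappa$-element set forces $\kappa$ to be even, which finishes ``only if''.

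For ``if'', the core case is $\kappa=\aleph_0$, $\lambda=3$. My plan is to build a sublattice $S^{\ast}\cong\Fl(\omega)$ of $\Fl(3)$ whose free generators occur in $\delta$-pairs $\{t_n,\delta(t_n)\}$, each $t_n$ being a \emph{symmetric} term in the generators $x,y,z$; then the involution $\pi$ of the previous paragraph is automatically fixed-point-free, $S^{\ast}$ is selfdually positioned because its generating set is $\delta$-closed, and $S^{\ast}$ is element-wise symmetric because it lies inside the sublattice of symmetric elements of $\Fl(3)$. Each $t_n$ is forced to be symmetric by applying the symmetrising join (or meet) over $\Sym(3)$, and the only substantial work is to arrange the $t_n$ so that the whole family is freely independent; for this I would adapt the construction of \cite{czg-selfdual} and verify the necessary inequalities via canonical forms and Whitman's condition~\wcond{}, cross-checking with the computer program mentioned in the abstract. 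Given $S^{\ast}$, a totally symmetric embedding $\Fl(2k)\to\Fl(3)$ is obtained for every $k$ as the sublattice of $S^{\ast}$ generated by $k$ of the pairs $\{t_n,\delta(t_n)\}$ (again $\delta$-closed and element-wise symmetric), and a general finite $\lambda\geq 3$ is handled by the same recipe, with $\Sym(\lambda)$ replacing $\Sym(3)$ and $x_1\vee\dots\vee x_\lambda$, $x_1\wedge\dots\wedge x_\lambda$ replacing $x\vee y\vee z$, $x\wedge y\wedge z$ in the building blocks.

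The step I expect to be the main obstacle is precisely this freeness verification: one has to guarantee that the symmetrised, $\delta$-paired terms satisfy no nontrivial lattice identity among themselves and, moreover, secure this uniformly in $\lambda$. By contrast, once the canonical-form machinery is available, the ``only if'' direction is short --- essentially because a free lattice possesses no self-dual element other than its free generators.
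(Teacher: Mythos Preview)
Your ``only if'' argument is correct and essentially coincides with the paper's: both exploit that $\delta$ restricts to a fixed-point-free involution on the free generators of $S\cong\Fl(\kappa)$ (you phrase this via the permutation $\pi=\delta_S^{-1}\circ\restrict{\delta}{S}$, the paper via $\restrict{\delta}{\Dirr S}$ directly, but these agree on $\Dirr S$ since $\delta_S$ fixes it pointwise), and both reach the contradiction through the fact that the only $\delta$-fixed elements of $\Fl(\lambda)$ are the free generators, none of which is symmetric.

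For the ``if'' direction your outline is in the right spirit, but the paper's execution is more structured and you should be aware of the difference. Rather than directly producing infinitely many symmetric $\delta$-pairs $\{t_n,\delta(t_n)\}$ and verifying their joint freeness, the paper proceeds in two stages. First, the Key Lemma~\ref{lemmakeylemma} constructs four specific symmetric elements $a,b,\ovl a,\ovl b\in\Sl(\lambda)$ (uniformly for every finite $\lambda\ge 3$) and proves that $\{a,\ovl a,b,\ovl b\}$ freely generates; this is a single finite verification via Whitman's condition, made tractable by preparatory lemmas showing that the building blocks $m_j$ are doubly prime in $\Sl(\lambda)$ and that nontrivial symmetric elements are near-unanimity terms. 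Second, Lemma~\ref{lemmawcMsKlMm} invokes the construction of \cite{czg-selfdual} \emph{inside} this copy of $\Fl(4)$: the swapping dual automorphism $\dswap4$ of $\Fl(4)$ coincides with $\restrict{\delta}{\Fl(4)}$, so the selfdually positioned copy of $\Fl(\omega)$ already built in \cite{czg-selfdual} becomes selfdually positioned in $\Fl(\lambda)$, and it is element-wise symmetric automatically because $[a,\ovl a,b,\ovl b]\subseteq\Sl(\lambda)$. This two-step reduction is precisely what turns your ``main obstacle'' into one finite computation plus a citation; attempting to verify freeness of an infinite symmetrised family directly, as your plan suggests, would be considerably harder to carry out and to make uniform in $\lambda$.
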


For later reference, we mention the following corollary even if it  trivially follows from Theorem~\ref{thmmain}. 

\begin{corollary}\label{corolnhRm} There exists a totally symmetric embedding $\Fl(4) \to \Fl(3)$.
\end{corollary}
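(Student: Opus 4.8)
The plan is to derive Corollary~\ref{corolnhRm} as the special case $\kappa=4$, $\lambda=3$ of Theorem~\ref{thmmain}\eqref{thmmaina}. For this I would simply check that the pair $(4,3)$ satisfies the arithmetic conditions in part~\eqref{thmmaina}: we need $\lambda\in\nfromo$, and indeed $\lambda=3\in\nfromo$; and we need $\kappa\in\set{2k:k\in\nfromo}\cup\set{\aleph_0}$, and indeed $\kappa=4=2\cdot 2$ lies in $\set{2k:k\in\nfromo}$. Both conditions hold, so Theorem~\ref{thmmain}\eqref{thmmaina} yields a totally symmetric embedding $\Fl(4)\to\Fl(3)$, which is exactly the assertion of the corollary.

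Alternatively, and perhaps more instructively, one can obtain the corollary from part~\eqref{thmmainb} of the theorem by composition. By Theorem~\ref{thmmain}\eqref{thmmainb} there is a totally symmetric embedding $\phi\colon\Fl(\omega)\to\Fl(3)$. Since $4\leq\aleph_0$, the free lattice $\Fl(4)$ embeds into $\Fl(\omega)$ via the canonical inclusion $\iota$ that sends the four free generators of $\Fl(4)$ to four of the free generators of $\Fl(\omega)$; this $\iota$ is a lattice embedding. The composite $\phi\circ\iota\colon\Fl(4)\to\Fl(3)$ is then a lattice embedding, and one would need to verify that its range $\phi(\iota(\Fl(4)))$ is again selfdually positioned and element-wise symmetric in $\Fl(3)$. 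The element-wise symmetry is immediate: $\phi(\iota(\Fl(4)))\subseteq\phi(\Fl(\omega))$, and every element of $\phi(\Fl(\omega))$ is a symmetric element of $\Fl(3)$ by Definition~\ref{deftotsyM}, so every element of the smaller range is symmetric as well.

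The one point that is genuinely not automatic along this second route is that $\phi(\iota(\Fl(4)))$ is \emph{selfdually positioned}, i.e.\ closed under $\delta=\delta_{\Fl(3)}$. This does not follow merely from $\phi(\Fl(\omega))$ being selfdually positioned, because a selfdual sublattice can easily have non-selfdual sublattices. One would have to argue that $\iota(\Fl(4))$ is selfdually positioned in $\Fl(\omega)$ — which holds because $\iota$ maps free generators to free generators and the natural dual automorphism $\delta_{\Fl(\omega)}$ fixes all free generators, so $\delta_{\Fl(\omega)}$ restricts to $\delta_{\Fl(4)}$ and preserves $\iota(\Fl(4))$ — and then that $\phi$ intertwines $\delta_{\Fl(\omega)}$ with $\delta_{\Fl(3)}$ on $\phi(\Fl(\omega))$, which is part of what ``selfdually positioned embedding'' should give. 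Because this bookkeeping is exactly what Theorem~\ref{thmmain} already packages, the cleanest route is the first one: the corollary is the instance $(\kappa,\lambda)=(4,3)$ of part~\eqref{thmmaina}, and there is nothing further to prove. The only ``obstacle'' is the trivial observation that $4$ is even.
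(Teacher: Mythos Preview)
Your primary argument---specializing Theorem~\ref{thmmain}\eqref{thmmaina} to $(\kappa,\lambda)=(4,3)$---is correct and is exactly how the paper treats the corollary; the paper states explicitly that it ``trivially follows from Theorem~\ref{thmmain}.'' (The paper also notes later, at the start of Section~\ref{sectionfromKeyL}, that the Key Lemma~\ref{lemmakeylemma} alone already gives a totally symmetric $\Fl(4)\to\Fl(\lambda)$, since $\{a,\ovl a,b,\ovl b\}\subseteq\Sl(\lambda)$ freely generates and is $\delta$-invariant.)

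A word of caution on your alternative route: the step ``$\phi$ intertwines $\delta_{\Fl(\omega)}$ with $\delta_{\Fl(3)}$'' is not just bookkeeping---it is \emph{false} in general and false for the paper's actual construction. Total symmetry only guarantees that $\delta_{\Fl(3)}$ restricts to \emph{some} dual automorphism of $\phi(\Fl(\omega))$; it need not correspond via $\phi$ to the natural $\delta_{\Fl(\omega)}$. In the paper's embedding, the restriction of $\delta_{\Fl(3)}$ to $\phi(\Fl(\omega))$ swaps the free generators in pairs ($x_1^i\leftrightarrow x_2^i$), cf.\ \eqref{eqBsthRlmTlBlDgL} and the discussion of $\dswap4$; it does not fix them. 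So choosing four arbitrary free generators of $\Fl(\omega)$ would typically produce a sublattice that is \emph{not} selfdually positioned in $\Fl(3)$. One could salvage this by choosing two $\delta$-orbits of generators, but then you are essentially redoing the proof of Lemma~\ref{lemmawcMsKlMm}. You were right to fall back on the first argument.
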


In addition to Theorem~\ref{thmmain} on total symmetry, we have some progress in studying selfdually positioned free sublattices, which is stated as follows.

\begin{theorem}\label{thmSD} Assuming that  $3\leq \kappa$ and $3\leq \lambda $ are cardinal numbers,  $\fl\lambda$ has a selfdually positioned sublattice  isomorphic to $\fl\kappa$
if and only if the inequality $\max\set{\kappa,\aleph_0} \leq \max\set{\lambda,\aleph_0}$ holds.
\end{theorem}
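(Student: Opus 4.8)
The plan is to prove the two implications separately. The ``only if'' direction is a pure cardinality count and uses nothing about selfduality; all the work is in the ``if'' direction, and there the only genuinely new case turns out to be the odd finite values of $\kappa$.

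For the ``only if'' direction I would first record that $|\fl\mu|=\max\set{\mu,\aleph_0}$ for every cardinal $\mu\geq 3$: such a free lattice is generated by $\mu$ elements, hence has at most $\max\set{\mu,\aleph_0}$ of them (there are only countably many lattice terms over each finite subset of the generators), while it is infinite and contains its $\mu$ generators. Now if $\fl\lambda$ has any sublattice $S$ isomorphic to $\fl\kappa$ (selfdually positioned or not), then $\max\set{\kappa,\aleph_0}=|S|\le|\fl\lambda|=\max\set{\lambda,\aleph_0}$, which is exactly the claimed inequality.

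For the ``if'' direction, assume $\max\set{\kappa,\aleph_0}\le\max\set{\lambda,\aleph_0}$. Two elementary facts will be used repeatedly. First, for cardinals $\mu\le\nu$ the sublattice $T$ of $\fl\nu$ generated by $\mu$ of its free generators is isomorphic to $\fl\mu$ and is selfdually positioned, because $\delta_{\fl\nu}$ fixes all free generators and therefore $T$ setwise; moreover the restriction $\delta_{\fl\nu}\kern-1pt\rceil_{\kern-1pt T}$ is a dual automorphism of $T$ fixing its free generators, so by the uniqueness in \eqref{eqpbxnatduAl} it equals $\delta_{\fl\mu}$. Consequently, ``being a selfdually positioned sublattice'' is transitive through such sub-generated copies: if $U\subseteq T$ satisfies $\delta_{\fl\mu}(U)=U$, then $\delta_{\fl\nu}(U)=U$. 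Using the first fact, the case $\kappa\le\lambda$ is immediate. So suppose $\kappa>\lambda$. Then $\lambda<\aleph_0$ (otherwise $\max\set{\kappa,\aleph_0}=\kappa>\lambda=\max\set{\lambda,\aleph_0}$) and hence $\kappa\le\aleph_0$ (from $\max\set{\kappa,\aleph_0}\le\max\set{\lambda,\aleph_0}=\aleph_0$), so $4\le\kappa\le\aleph_0$. Since $\fl 3$ sits in $\fl\lambda$ as a sub-generated, selfdually positioned copy, it suffices to embed $\fl\kappa$ into $\fl 3$ in a selfdually positioned way. For $\kappa=\aleph_0$ this is \eqref{eqpbxflhrMgsD}; for $\kappa$ even and finite it follows from Theorem~\ref{thmmain}, since a totally symmetric embedding is in particular selfdually positioned. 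This leaves the odd finite values $\kappa\in\set{5,7,9,\dots}$.

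For the remaining odd case I would first observe that the only fixed points of the natural dual automorphism of a free lattice are its free generators: the canonical form of $\delta(w)$ is obtained from that of $w$ by interchanging all joins and meets, so if $\delta(w)=w$ and the canonical form of $w$ were a join of two or more terms, that same canonical form would also be a meet of two or more terms, contradicting uniqueness of canonical forms; hence $w$ is a generator. In a selfdually positioned copy of $\fl\kappa$ inside $\fl 3$, the dual automorphism $\delta_{\fl 3}$ therefore permutes the (doubly irreducible, hence free) generators of the copy by an involution, which for odd $\kappa$ must have a fixed point; so one of the generators of the copy must be one of $x,y,z$. Accordingly I would realize the copy as the sublattice of $\fl 3$ generated by $\set{x,\ t_1,\ \delta_{\fl 3}(t_1),\ \dots,\ t_m,\ \delta_{\fl 3}(t_m)}$ with $\kappa=2m+1$, where $t_1,\dots,t_m\in\fl 3\setminus\set{x,y,z}$ are chosen by adapting the explicit elements built in the proof of Theorem~\ref{thmmain}. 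Such a generating set is automatically $\delta_{\fl 3}$-invariant, so the resulting sublattice is selfdually positioned; the main obstacle — and essentially the only one — is to verify that the listed $2m+1$ elements freely generate a sublattice isomorphic to $\fl{2m+1}$, which I would do by the standard canonical-form and Whitman-condition bookkeeping for generating sets in free lattices (Freese, Je\v zek and Nation~\cite{fjnmonograph}).
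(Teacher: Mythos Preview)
Your skeleton matches the paper's: the cardinality count for ``only if'', the reduction to $\kappa>\lambda$ (hence $\lambda$ finite and $4\le\kappa\le\aleph_0$), the observation that the even and $\aleph_0$ cases are covered by the totally symmetric embeddings, and the recognition that the odd case must use a free generator $x_0$ as a $\delta$-fixed member of the generating set. Your transitivity reduction through a sub-generated $\fl 3\subseteq\fl\lambda$ is a harmless simplification; the paper argues directly in $\fl\lambda$ but the content is the same.

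The gap is in the last paragraph. Saying you will verify free generation of $\set{x,t_1,\delta(t_1),\dots,t_m,\delta(t_m)}$ by ``standard canonical-form and Whitman-condition bookkeeping'' is not a proof, and in fact the paper does \emph{not} proceed that way. The point is that the Key Lemma (Lemma~\ref{lemmakeylemma}) is deliberately stated for the \emph{five}-element set $\set{a,\ovl a,b,\ovl b,x_0}$, not just for $\set{a,\ovl a,b,\ovl b}$. Once you know that $S:=[a,\ovl a,b,\ovl b,x_0]$ is free on these five generators, you get endomorphisms $\what\xi_1,\what\xi_2\colon S\to S$ sending $(a,\ovl a,b,\ovl b,x_0)$ to $(0_S,0_S,0_S,0_S,1_S)$ and to $(a,\ovl a,b,\ovl b,0_S)$ respectively. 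If the $2m$ elements $c_0,\dots,c_{2m-1}$ (your $t_i,\delta(t_i)$) are taken inside $[a,\ovl a,b,\ovl b]$, then applying $\what\xi_1$ kills the inequality $x_0\le\bigvee c_i$, and applying $\what\xi_2$ reduces $c_j\le x_0\vee\bigvee_{i\ne j}c_i$ to $c_j\le\bigvee_{i\ne j}c_i$, which fails because the $c_i$ already freely generate. That is the whole argument; no term-by-term Whitman analysis of the $c_i$ against $x_0$ is needed, and attempting one uniformly in $m$ would be the wrong tool. So your plan becomes a proof as soon as you (i) place the $t_i$'s inside $[a,\ovl a,b,\ovl b]$ via Lemma~\ref{lemmawcMsKlMm}, and (ii) invoke the five-element freeness from the Key Lemma to run the two-endomorphism separation.
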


This theorem is stronger than \eqref{eqpbxflhrMgsD}, the main result of Cz\'edli~\cite{czg-selfdual}. Implicitly, 
the particular case of Theorem~\ref{thmSD} where $\lambda=3$ and  $\kappa$ belongs to $\set{\aleph_0}\cup\set{2k:k\in\nfromo}$ is also included in \cite{czg-selfdual}.

\subsection*{Prerequisites} The reader is expected to have  some basic familiarity only with the rudiments of lattice theory.
That is, only some preliminary sections of the monographs, say, 
 Gr\"atzer~\cite{gGLTF} or  Nation~\cite{nationbook} are assumed.
The results on free lattices that we need from the literature, mainly from Freese, Je\v zek, and Nation~\cite{fjnmonograph}, are known for most lattice theorists and will be quoted with sufficient details since the  paper is intended to be self-contained. In  Section~\ref{sectionfromKeyL}, we quote some recent achievements from \cite{czg-selfdual}; when reading this section, the reader does not have to but may want to  look into  Cz\'edli~\cite{czg-selfdual}\footnote{\red{Temporary note, only in the preprint version}: available from the author's homepage.}{} to verify how we quote from it.

\subsection*{Main ideas of the paper} In this subsection, we deal mainly with the totally symmetric embeddability of $\fl \omega$ into $\fl 3$, that is, with Part~\eqref{thmmainb} of Theorem~\ref{thmmain}; the rest of the results are derived from or proved like Theorem~\ref{thmmain}\eqref{thmmainb}, or they are easier. 

First, we define \emph{symmetric} elements $m_1<\dots<m_4$ in $\fl 3$, see \eqref{eqalignedXSRpPtW}, and  $\ovl {m_i}$ will be the dual of $m_i$ for $i\in\set{1,\dots,4}$. With some computation based on Whitman's condition, we can prove that 
\begin{enumerate}[\upshape({plan}1)]
{\setlength\itemindent{\cindent}
\item\label{pLaNna} $P=\set{m_1,\dots,m_4}\cup\set{\ovl{m_1},\dots,\ovl{m_4}}$ is the cardinal sum of two 4-element chains; see Figure~\ref{figConstr} for an illustration, and
}
{\setlength\itemindent{\cindent}\item\label{pLaNnb} we prove some properties of $P$ implying that 
the sublattice $[P]_{\fl 3}$  generated by $P$ in $\fl 3$ is isomorphic to the completely free lattice $\cfree(P;\leq)$ generated by the ordered set $(P;\leq)$.} 
\end{enumerate}
Combining the isomorphism $[P]_{\fl 3}\cong\cfree(P;\leq)$ with the main result of Rival and Wille~\cite{rivalwille}, we could immediately obtain an element-wise symmetric sublattice $S$ of $\fl 3$ such that $S\cong\fl\omega$. However, we want more. Hence,
\begin{enumerate}[\upshape({plan}1)]
\setcounter{enumi}{2}
{\setlength\itemindent{\cindent}\item\label{pLaNnc} we define $a,b\in [P]_{\fl 3}$ in \eqref{eqalignedXSRpPtW},  see also Figure~\ref{figConstr}, such that some computation based on Whitman's condition yields that 
$\fl 4$ is isomorphic to $[a,b,\ovl a,\ovl b]_{\fl 3}$, where $\ovl a$ and $\ovl b$ are the duals of $a$ and $b$, respectively. Note that the restriction of  $\delta_{\fl 3}$ to $\fl 4$ is not the natural dual automorphism of $\fl 4$ since it swaps the free generators of $\fl 4$. 
}
\end{enumerate}
At this stage, Corollary~\ref{corolnhRm} is already proved. 
Next, let  $\dswap4$ denote the unique
dual automorphism  of $\fl 4=\Fl(y_0,y_1,y_2,y_3)$ for which we have that $\dswap4(y_0)=y_1$,  $\dswap4(y_1)=y_0$,  $\dswap4(y_3)=y_4$,  $\dswap4(y_4)=y_3$; we call it the \emph{swapping} dual automorphism of $\fl 4$.
\begin{enumerate}[\upshape ({plan}1)]
\setcounter{enumi}{3}
{\setlength\itemindent{\cindent}\item\label{pLaNnd} By the ``diagonal method'' of Cz\'edli~\cite{czg-selfdual}, $\fl \omega$ is a sublattice of  $\fl 4$ closed with respect to $\dswap4$.}
\end{enumerate}
Finally,  a straightforward computation 
will show that if we combine (plan\ref{pLaNnc}) and (plan\ref{pLaNnd}), then their ``swapping'' features neutralize each other and we obtain  a totally symmetric embedding $\fl\omega\to\fl 3$, as required. 

If we embed $\fl\omega$ or $\fl\kappa$ into $\fl\lambda$, rather than into $\fl 3$,
then some of the above-mentioned computations, most of which can be done by a computer, become longer. Fortunately, we can often rely on the following fact, which deserves separate mentioning here: with two trivial exceptions, every symmetric element of $\fl\lambda$ is given by a near-unanimity term.

Let us note that the isomorphism $[P]_{\fl 3}\cong \cfree(P;\leq)$ and the above-mentioned result of Rival and Wille~\cite{rivalwille} 
are only motivating facts and will not be used in the detailed proof. Note also that this subsection will not be used in the rest of the paper; due to elaborated details and many internal references, 
the proofs are readable without keeping the main ideas in mind. Finally, since we also need to prove (our second) Theorem~\ref{thmSD}, we will prove more on $\set{a,b,\ovl a,\ovl b}$ than what is required by (plan\ref{pLaNnc}).

\subsection*{Outline} Our main results, Theorems~\ref{thmmain} and \ref{thmSD}, and our main ideas have already been presented; the rest of the  paper is structured as follows.  
We add some comments and two corollaries to the main result in 
Section~\ref{sectioncomments}. These corollaries characterize the pairs $(\kappa,\lambda)$ of cardinals having the property that there is an embedding $\fl\kappa\to \fl\lambda$ with symmetric range or with selfdually positioned and symmetric range.
The lion's share of our construction and (the Key) Lemma~\ref{lemmakeylemma} stating that this construction works are given in Section~\ref{sectionkeyLm}.  The Key Lemma  is proved in Section~\ref{sectionKLproofS}. Section~\ref{sectionfromKeyL} combines the construct given in Section~\ref{sectionkeyLm} with that given in Cz\'edli~\cite{czg-selfdual}. Section~\ref{sectionRestPrf} completes the proofs of our theorems and proves the corollaries.
Finally, Section~\ref{sectcompProgr} describes our computer program for the word problem of free lattices; note that this  program and its source file are freely available and the program proves Corollary~\ref{corolnhRm} in less than a millisecond.

\section{Remarks and corollaries}\label{sectioncomments}
We will often use the convenient notation $\fl\kappa=\Fl(x_i:i<\kappa)$ in order the specify the free generating set $\set{x_i:i<\kappa}$ of size $\kappa$; in this case, $i\in\nfromz$ denotes an ordinal number and $i<\kappa$ is understood as $|i|<\kappa$. 
(Equivalently, we could say that a cardinal number is the smallest ordinal with a given cardinality; then $i<\kappa$ would have its usual meaning for ordinals.)
Mostly, $\kappa$ is not larger than $\aleph_0$ and then $i\in\nfromz$ will denote a nonnegative integer. 
 Of course, we can write $\Fl(x_0,\dots,x_{\kappa-1})$ or $\Fl(y_i:i<\kappa)$ if $\kappa$ is finite or we need to denote the free generators in a different way, respectively. Also, we write $\fl X$ if we do not want to specify the size $|X|$ of the free generating set $X$.
An element $u$ of a lattice $L$ is \emph{doubly irreducible} if
$L\setminus\set u$ is closed with respect to both joins and meets,  that is, if $u$ is both \emph{join irreducible} and \emph{meet irreducible}. The set of doubly irreducible elements of $L$ will be denoted by $\Dirr L$. We know from Freese, Je\v zek, and Nation~\cite{fjnmonograph}, see
  Corollary 1.9 and the first sentence of the proof of Corollary 1.12,  that 
\begin{align}
&\Dirr{\fl\kappa} = \set{x_i:i<\kappa}, \text{ and}
\label{aligntxtDfkrZntNshG}\\
&\text{every element of }\fl\kappa\text{ is join or meet irreducible.}
\label{aligntxtkrdSkFrZnbT}
\end{align} 
Since a dual automorphism maps join-irreducible elements to meet irreducible ones and vice versa, \eqref{aligntxtDfkrZntNshG} and \eqref{aligntxtkrdSkFrZnbT} imply that for every dual automorphism $\psi$ of $\fl\kappa$, we have that
\begin{equation}
\set{u: \psi(u)=u}\subseteq \psi(\set{x_i:i<\kappa}) = \set{x_i:i<\kappa}.
\label{eqtxtFpNslmnTwszpThG}
\end{equation}

\begin{remark} The concept of totally symmetric embeddings 
might raise the question whether we could consider even stronger embeddings whose ranges are element-wise symmetric and are in \emph{element-wise} selfdual positions. We obtain from \eqref{eqtxtFpNslmnTwszpThG} that the answer is negative, since  at most the free generators are in element-wise selfdual positions and they form an antichain. This justifies our terminology to call the embeddings in Theorem~\ref{thmmain} \emph{totally} symmetric.
\end{remark}

\begin{remark} Assume that $3\leq\kappa\leq\aleph_0$ and $3\leq\lambda\leq\aleph_0$; as a comparison between the result of Whitman~\cite{whitman} and Theorem~\ref{thmmain}, note the following. It follows immediately from Whitman' result that 
$\fl\kappa$ is embeddable into $\fl\lambda$, because embeddability is a transitive relation and $\gamma_1\leq\gamma_2$ implies that  $\fl{\gamma_1}$ is embeddable into $\fl{\gamma_2}$. However, the analogous implication fails for totally symmetric embeddability, since a symmetric element of $\fl{\gamma_1}$ is not symmetric in $\fl{\gamma_2}$ for $\gamma_2>\gamma_1$. This explains that, as opposed to Whitman's result,  Theorem~\ref{thmmain} contains two parameters, $\kappa$ and $\lambda$.
\end{remark}

For a lattice $L$, let $\Daut(L)$ be the set of all automorphisms and dual automorphisms of $L$.
As a consequence of  \eqref{aligntxtDfkrZntNshG} and \eqref{aligntxtkrdSkFrZnbT}, note that for $\kappa\leq \omega$,
\begin{equation}
\parbox{9.5cm}{$\Dirr{\fl\kappa}$ is closed with respect to every $\pi\in\Daut(\fl\kappa)$. Furthermore, each $\pi\in\Daut(\fl\kappa)$ is determined by its restriction to $\Dirr{\fl\kappa}$ if we know whether it is an automorphism or a dual automorphism.}
\label{eqpbxdnwlmghTrzzdtnbQ}
\end{equation}
With respect to composition, $\Daut(L)$ is a group and $\Aut(L)$ is a normal subgroup in it with index $[\Daut(L):\Aut(L)]=2$. Let us call a subset $S$ of $L$ a \emph{\DAstr-symmetric} subset if $\pi(S)=S$ for all $\pi\in\Daut(L)$. 
A dually positioned and element-wise symmetric sublattice of $\fl\lambda$, like the range of a totally symmetric embedding, 
is clearly \DAstr-symmetric but not conversely. 
This might give some hope that a counterpart of 
Theorem~\ref{thmmain} for embeddings with \DAstr-symmetric
ranges allows the case when $\kappa$ is an odd natural number. However, the following corollary of Theorem~\ref{thmmain} shows that this is not so if $\kappa\neq\lambda$. This  corollary as well as Corollary~\ref{corolaUtsym}  will be proved in Section~\ref{sectionRestPrf}.

\begin{corollary}\label{corolDA}
Assuming that $3\leq \kappa$ and    $3\leq \lambda$ are  cardinal numbers, the following two conditions are equivalent.
\begin{enumeratei}
\item\label{corolDAa} There exists an embedding  $\Fl(\kappa) \to \Fl(\lambda)$ with \DAstr-symmetric range.
\item\label{corolDAb} Either $\kappa=\lambda$, or we have that $\lambda\in\nfromo$ and $\kappa\in\set{2k:k\in\nfromo}\cup\set{\aleph_0}$.
\end{enumeratei}
\end{corollary}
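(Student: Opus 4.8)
The plan is to derive Corollary~\ref{corolDA} from Theorem~\ref{thmmain} together with the structural facts \eqref{aligntxtDfkrZntNshG}--\eqref{eqpbxdnwlmghTrzzdtnbQ} about $\Daut(\fl\kappa)$. The easy direction is \ref{corolDAb}$\Rightarrow$\ref{corolDAa}: if $\kappa=\lambda$, take $\phi$ to be the identity embedding $\fl\lambda\to\fl\lambda$, whose range is all of $\fl\lambda$ and hence trivially closed under every $\pi\in\Daut(\fl\lambda)$; and if $\lambda\in\nfromo$ and $\kappa\in\set{2k:k\in\nfromo}\cup\set{\aleph_0}$, then Theorem~\ref{thmmain}\eqref{thmmaina} gives a totally symmetric embedding $\fl\kappa\to\fl\lambda$, and as observed in the paragraph preceding the corollary a selfdually positioned, element-wise symmetric sublattice is automatically $\Daut$-symmetric.

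The substance is the forward direction \ref{corolDAa}$\Rightarrow$\ref{corolDAb}. So suppose $\phi\colon\fl\kappa\to\fl\lambda$ is an embedding whose range $S=\phi(\fl\kappa)$ satisfies $\pi(S)=S$ for all $\pi\in\Daut(\fl\lambda)$, and assume $\kappa\neq\lambda$; I must show $\lambda\in\nfromo$ and $\kappa\in\set{2k:k\in\nfromo}\cup\set{\aleph_0}$. First I would argue $\lambda$ is finite: if $\lambda\geq\aleph_0$ then $\fl\lambda$ has no symmetric elements (the argument is the one recalled for $\fl\omega$ in the introduction, generalized: the free generators can be permuted transitively by automorphisms, so none is fixed, and by \eqref{eqtxtFpNslmnTwszpThG} every automorphism-fixed element is a free generator), hence in particular no $\Daut$-fixed element; but then I need a further step, because $\Daut$-symmetry of $S$ as a \emph{set} is weaker than element-wise symmetry. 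Here the key idea is that the automorphism group $\Aut(\fl\lambda)$, being the full symmetric group on the $\lambda$ free generators by \eqref{eqpbxdnwlmghTrzzdtnbQ}, acts on $S$; since $S\cong\fl\kappa$ and $\kappa\geq 3$, the automorphism group $\Aut(S)$ is, by the same reasoning applied inside $S$, the full symmetric group on the $\kappa$ free generators of $S$, and in particular $S$ itself has no symmetric elements and no nontrivial finite $\Aut(\fl\lambda)$-orbits would be forced — I would compare the permutation action of $\Sym(\lambda)$ (on generators of $\fl\lambda$, inducing an action on $S$) with $\Aut(S)=\Sym(\kappa)$ and extract the arithmetic constraints. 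Concretely, restricting $\pi\in\Aut(\fl\lambda)$ to $S$ gives a homomorphism $\Aut(\fl\lambda)\to\Aut(S)$, i.e. $\Sym(\lambda)\to\Sym(\kappa)$; analysing which such homomorphisms can arise (using that $S$ is freely generated and that $\Daut$ acts, so also the $\kappa$ generators of $S$ carry a $\Daut(\fl\lambda)$-action) should pin down both that $\lambda$ is finite and that $\kappa$ is even or $\aleph_0$.

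The cleanest route, which I expect to be the one the authors intend, avoids this general orbit analysis by reducing directly to Theorem~\ref{thmmain}: a $\Daut$-symmetric sublattice $S$ is in particular closed under $\delta_{\fl\lambda}$, i.e. selfdually positioned, and one shows that from a $\Daut$-symmetric copy of $\fl\kappa$ with $\kappa\neq\lambda$ one can \emph{manufacture} a totally symmetric embedding of $\fl\kappa$ (or of a free lattice of a related rank) into $\fl\lambda$. The mechanism: let $G=\Aut(\fl\lambda)=\Sym(\lambda)$; for each $u\in S$ the orbit $G\cdot u$ lies in $S$ (automorphisms preserving $S$) and is finite because $\fl\lambda$'s elements have only finitely many distinct images under coordinate permutations that move finitely many coordinates — wait, this needs $\lambda$ finite, so one first disposes of infinite $\lambda$ as above, then with $\lambda$ finite $G$ is finite, every $G$-orbit in $S$ is finite, and the meets/joins over orbits are $G$-fixed, hence symmetric elements of $\fl\lambda$ lying in $S$; these generate an element-wise symmetric sublattice $S'\subseteq S$, and $\delta_{\fl\lambda}$-closure of $S$ plus a symmetrization over $\Daut$ makes $S'$ selfdually positioned. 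One then needs $S'$ to still be free of the right rank — this is where Tschantz-type / Whitman-condition arguments or the explicit constructions of Sections~\ref{sectionkeyLm}--\ref{sectionfromKeyL} would be invoked — and then Theorem~\ref{thmmain}\eqref{thmmaina} forces $\lambda\in\nfromo$ and $\kappa\in\set{2k:k\in\nfromo}\cup\set{\aleph_0}$. The main obstacle, and the step I would spend the most care on, is precisely the passage from "$S$ is $\Daut$-symmetric as a set" to producing an \emph{element-wise} symmetric free sublattice without losing the freeness or changing $\kappa$; the parity constraint $\kappa\in\set{2k}\cup\set{\aleph_0}$ (as opposed to odd $\kappa$ being allowed) should fall out of the fact that $\delta_{\fl\lambda}$ together with $\Aut(\fl\lambda)$ acts on the $\kappa$ free generators of $S$ with the generator-fixing dual automorphism of $S$ \emph{not} being the restriction of anything in $\Daut(\fl\lambda)$ — so the induced involution on the $\kappa$ generators of $S$ is fixed-point-free, forcing $\kappa$ even when finite, exactly as in the $\fl 4\hookrightarrow\fl 3$ construction sketched in (plan\ref{pLaNnc}).
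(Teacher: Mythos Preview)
Your easy direction \ref{corolDAb}$\Rightarrow$\ref{corolDAa} is fine and matches the paper. The forward direction, however, has real gaps, and your proposed ``cleanest route'' is a detour the paper does not take and that you yourself leave incomplete.

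The paper's argument is much more direct than either of your sketches. There is no attempt to manufacture a totally symmetric $S'\subseteq S$ and then invoke Theorem~\ref{thmmain}; that would indeed require showing $S'$ is free of rank $\kappa$, which you correctly flag as unresolved. Instead the paper works straight with $\Dirr S$. Since $S\cong\fl\kappa$, \eqref{aligntxtDfkrZntNshG} gives $|\Dirr S|=\kappa$, and since $S$ is $\Daut$-closed, $\delta_{\fl\lambda}$ restricts to an involution on $\Dirr S$. The crucial step, which your final paragraph gestures at but does not argue correctly, is why this involution is fixed-point-free when $\kappa\neq\lambda$: a fixed point $u\in\Dirr S$ satisfies $\delta_{\fl\lambda}(u)=u$, hence by \eqref{eqtxtFpNslmnTwszpThG} $u$ is a \emph{free generator of $\fl\lambda$}; but $S$ is $\Aut(\fl\lambda)$-closed and $\Aut(\fl\lambda)$ acts transitively on the generators, so $S$ contains all of them, whence $S=\fl\lambda$ and $\kappa=\lambda$. (Your reason --- that $\delta_S$ is not a restriction from $\Daut(\fl\lambda)$ --- is not what is needed and does not by itself give fixed-point-freeness.) With no fixed point, $\kappa=|\Dirr S|$ cannot be an odd integer.

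Your treatment of infinite $\lambda$ is also off. First, your justification that $\fl\lambda$ has no symmetric elements misuses \eqref{eqtxtFpNslmnTwszpThG}, which concerns fixed points of \emph{dual} automorphisms, not automorphisms. Second, even granting that fact, you rightly note it does not by itself rule out a $\Daut$-symmetric \emph{set} $S$, and your proposed homomorphism $\Sym(\lambda)\to\Sym(\kappa)$ is left undeveloped. The paper instead proves a clean orbit lemma: for $\lambda\geq\aleph_0$ and any $u$ in a symmetric sublattice $S$, the $\Aut(\fl\lambda)$-orbit of $u$ already has size $\lambda$ (using that $u$ lies in $[Y]$ for some finite $Y\subseteq X$ and that one can choose $\lambda$ pairwise disjoint translates of $Y$). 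Applied to $u\in\Dirr S$ this gives $|\Dirr S|=\lambda$, hence $\kappa=\lambda$, disposing of the infinite case without any appeal to Theorem~\ref{thmmain}.
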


The situation is different if we deal with embeddings whose ranges are symmetric with respect only to $\Aut{\fl\lambda}$.

\begin{corollary}\label{corolaUtsym}
Assuming that $3\leq \kappa$ and    $3\leq \lambda$ are  cardinal numbers, there exists an embedding  $\Fl(\kappa) \to \Fl(\lambda)$ with symmetric range if and only if 
either $\lambda\in\nfromo$ and $\kappa\leq\aleph_0$, or $\kappa=\lambda\geq \aleph_0$.
\end{corollary}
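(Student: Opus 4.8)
I would prove the two implications separately, deriving the ``if'' part from Theorem~\ref{thmmain} and settling the ``only if'' part by counting the orbits of the action of $\Aut(\fl\lambda)$ on $\fl\lambda$. For the ``if'' direction, the diagonal case $\kappa=\lambda\ge\aleph_0$ is handled by the identity embedding $\fl\lambda\to\fl\lambda$, whose range is all of $\fl\lambda$ and is therefore trivially symmetric. In the remaining case we have $\lambda\in\nfromo$ and $3\le\kappa\le\aleph_0$, and I would compose the inclusion $\fl\kappa\hookrightarrow\fl{\aleph_0}$ that sends free generators to free generators (available since $\kappa\le\aleph_0$) with a totally symmetric embedding $\psi\colon\fl{\aleph_0}\to\fl\lambda$; such a $\psi$ exists by Theorem~\ref{thmmain}\eqref{thmmaina}, since $\lambda\in\nfromo$ and $\aleph_0\in\set{2k:k\in\nfromo}\cup\set{\aleph_0}$. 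The range of the composite is contained in $\psi(\fl{\aleph_0})$, which is element-wise symmetric by Definition~\ref{deftotsyM}; as every subset of an element-wise symmetric set is again element-wise symmetric and in particular symmetric, the composite is an embedding of the required kind.

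For the converse, let $\phi\colon\fl\kappa\to\fl\lambda$ be an embedding with symmetric range $S:=\phi(\fl\kappa)$. If $\lambda$ is finite, then $\fl\lambda$ is countable, so $|\fl\kappa|=|S|\le\aleph_0$ forces $\kappa\le\aleph_0$ and the first alternative holds; hence we may assume $\lambda\ge\aleph_0$. Since double irreducibility is an intrinsic property and $\phi$ is an isomorphism onto $S$, we obtain $\Dirr S=\phi(\Dirr{\fl\kappa})=\phi(\set{x_i:i<\kappa})$ by \eqref{aligntxtDfkrZntNshG}, whence $|\Dirr S|=\kappa\ge 3$. Moreover $\Dirr S$ is symmetric, because each $\pi\in\Aut(\fl\lambda)$ fixes $S$ setwise and so restricts to an automorphism of $S$, which must preserve $\Dirr S$. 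Thus $\Dirr S$ is a nonempty union of $\Aut(\fl\lambda)$-orbits.

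The heart of the proof is then the claim that, for infinite $\lambda$, \emph{every $\Aut(\fl\lambda)$-orbit on $\fl\lambda$ has cardinality $\lambda$}. By \eqref{aligntxtDfkrZntNshG}, \eqref{aligntxtkrdSkFrZnbT} and the freeness of $\fl\lambda$, the automorphisms of $\fl\lambda$ are exactly the extensions of permutations of the free generating set $\set{x_i:i<\lambda}$. Each $w\in\fl\lambda$ has a canonical form (Freese, Je\v zek and Nation~\cite{fjnmonograph}); let $Y(w)$ be the finite, nonempty set of free generators occurring in it. By uniqueness of the canonical form, a permutation $\sigma$ with induced automorphism $\pi_\sigma$ satisfies $Y(\pi_\sigma(w))=\sigma(Y(w))$. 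Since $\lambda$ is infinite, the orbit of $w$ therefore surjects, via $w'\mapsto Y(w')$, onto the collection of all $|Y(w)|$-element subsets of $\set{x_i:i<\lambda}$, a collection of cardinality $\lambda$; as the orbit has at most $|\fl\lambda|=\lambda$ elements, it has exactly $\lambda$. Consequently $\kappa=|\Dirr S|\ge\lambda$, so $\kappa\ge\aleph_0$ and $|\fl\kappa|=\kappa$; combined with $\kappa=|\fl\kappa|=|S|\le|\fl\lambda|=\lambda$ this gives $\kappa=\lambda\ge\aleph_0$, the second alternative.

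I expect the orbit-size claim to be the main obstacle: it requires making the assignment $w\mapsto Y(w)$ precise and checking its equivariance under $\Aut(\fl\lambda)$, for which one leans on the canonical-form machinery of~\cite{fjnmonograph}. Everything else is routine cardinal arithmetic together with the elementary remark that the restriction of an automorphism to a sublattice it fixes setwise is again an automorphism of that sublattice.
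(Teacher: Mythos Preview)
Your proof is correct and follows the same architecture as the paper: the ``if'' part via Theorem~\ref{thmmain} composed with an inclusion $\fl\kappa\hookrightarrow\fl{\aleph_0}$, and the ``only if'' part for infinite $\lambda$ by showing $|\Dirr S|=\lambda$ through an orbit-size argument. The one difference worth noting is how you establish that every $\Aut(\fl\lambda)$-orbit has cardinality $\lambda$: you invoke canonical forms from~\cite{fjnmonograph} to obtain an equivariant support map $w\mapsto Y(w)$, whereas the paper (in observation~\eqref{pbxdznGhHmQprnhGr}, proved within the argument for Corollary~\ref{corolDA}) uses a more elementary device---choose any finite $Y$ with $u\in[Y]$, pick $\lambda$ many permutations $\pi_i$ with pairwise disjoint images $\pi_i(Y)$, and separate the resulting $\ketparamaut\pi i(u)$ by homomorphisms to $\alg 2$. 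This avoids the canonical-form machinery you flagged as the main obstacle, so you may find it cleaner; conversely, your version makes the equivariance $Y(\pi_\sigma(w))=\sigma(Y(w))$ explicit, which is conceptually pleasant.
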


Since our concepts are based on the automorphisms of $\fl n$, where $n:=\lambda$ is a positive integer, let us have a look at what these automorphisms and the elements of $\Daut(\fl n)$ are. Let $\Sym_n=\Sym(\set{0,1,\dots,n-1})$ denote the group of all permutations of $\set{0,1,\dots,n-1}$ with respect to composition, and let $\ssf C_2=\set{1,-1}$ be the cyclic group of order 2, considered a subgroup of the group $(\mathbb R\setminus\set0;\cdot)$ of nonzero real numbers with respect to multiplication. Using that  $\fl n=\Fl(x_i:i<n)$ is freely generated by the set $\set{x_i:i<n}$, the following remark is straightforward and its proof is omitted.

\begin{remark} 
\label{remcspRtlrS}
For $n\in\nfromo$ and the free lattice $\fl n=\Fl(\set{x_i:i<n})$, the groups $\Aut(\fl n)$ and $\Daut(\fl n)$ are isomorphic to $\Sym_n$ and $\Sym_n\times \ssf C_2$, respectively. More specifically, for $\sigma\in\Sym_n$, let $\aut\sigma\colon \fl n\to \fl n$ and $\daut\sigma\colon \fl n\to \fl n$ be the maps defined by 
\begin{align*}
\aut\sigma(t(x_0,\dots, x_{n-1}))&:= t(x_{\sigma(0)},\dots, x_{\sigma(n-1)})\text{ and}\cr
\daut\sigma(t(x_0,\dots, x_{n-1}))&:= \delta_{\fl n}(t(x_{\sigma(0)},\dots, x_{\sigma(n-1)})),
\end{align*}
respectively, where $t$ denotes an $n$-ary lattice term and  $\delta=\delta_{\fl n}$ has been defined in  \eqref{eqpbxnatduAl}. Then the maps
\begin{align*}
&\Sym_n\to \Aut(\fl n),\text{ defined by }\sigma\mapsto \aut\sigma\text{, and}\cr
&\Sym_n\times \ssf C_2\to \Daut(\fl n),\text{ defined by }
(\sigma,k)\mapsto
\begin{cases}
\daut\sigma,&\text{if }k=-1,\cr
\aut\sigma,&\text{if }k=1
\end{cases}
\end{align*}
are group isomorphisms. (In particular, they are well defined maps.)
\end{remark}

\section{Construction and the Key Lemma}\label{sectionkeyLm}
\subsection{Notation}\label{subsectkla}
The elements of a free lattice $\fl\kappa=\Fl(x_i:i<\kappa)$ will be represented by \emph{lattice terms} over the set $\set{x_i:i<\kappa}$ of variables. Although there are many terms representing the same element of $\fl\kappa$, it will not cause any confusion that 
\begin{equation}
\parbox{10cm}{we often  treat and  call these \emph{terms as elements} of the free lattice;}
\label{eqpbxtrTtRlMnts}
\end{equation}
see pages 10--11 in Freese, Je\v zek and Nation~\cite{fjnmonograph} for a more rigorous setting. 
The dual of a term $t$ will always be denoted by $\ovl t$; the overline reminds us that dualizing at visual level  means to reflect Hasse diagrams across horizontal axes. The Symmetric part of the Free Lattice of $\fl \kappa$ will be denoted as follows; capitalization explains the acronym:
\begin{equation*}
\Sl(\kappa):=\set{u\in \fl\kappa:  \pi(u)=u\text{ for all }\pi\in\Aut{\fl\kappa} }.
\end{equation*}
Clearly, $\Sl(\kappa)$ is a sublattice of $\fl\kappa$; this fact will often be used implicitly.

\subsection{Constructing some important terms}\label{subsectklb} In this subsection,
we give a construction for the particular case $(\kappa,\lambda)=(4,n)$ of Theorem~\ref{thmmain}\eqref{thmmaina}. 
Let us agree to the following conventions. The set  
\begin{equation*}
\text{$\set{0,1,\dots,n-1}$ will also be denoted by $[n)$.}
\end{equation*} 
The inequality $i<n$ is equivalent to $i\in [n)$. 
Whenever  $x_i$, $x_j$, etc.\ refer to a free generator  of $\Fl(n)=\Fl(x_0,\dots, x_{n-1})$, then $i,j,\dots$ will automatically belong to $[n)$; this convention will often save us from indicating, say, that $i<n$ or $i,j\in[n)$  below the $\bigvee$ and $\bigwedge$  operation signs. Also, we frequently abbreviate the conjunction of $i\in [n)$ and $j\in[n)\setminus\set i$ by the short form $i\neq j$, and self-explanatory similar other abbreviations may also occur. 
For the rest of this  section, let $3\leq n=\lambda \in\nfromo$ and  
\[
\fl\lambda=\Fl(n)=\Fl(x_0, \dots, x_{n-1})=\Fl(x_i: i<n).
\] 
By induction on $j$, we define the following $n$-ary lattice terms over the set $\set{x_i:i<n}$ of variables; according to \eqref{eqpbxtrTtRlMnts}, they will also be considered elements of $\fl n$. Namely, we let
\begin{equation}
\begin{aligned}
p^{(i)}_0&:=x_i\text{ for }i\in\set{0,1,\dots,n-1}=[n),\cr
p^{(i)}_j&:=x_i\vee\bigvee_{\substack{ i_1<i_2 \\  i_1,i_2\in[n)\setminus\set i }}  
  \bigl(p^{(i_1)}_{j-1}\wedge p^{(i_2)}_{j-1} \bigr)\,\, \text{ for }i\in[n)\text{ and }j\in\nfromo,\cr
m_j&:=\bigvee_{{ i_1<i_2,\,\, i_1,i_2\in[n)}} \bigl( p^{(i_1)}_j \wedge p^{(i_2)}_j \bigr)\,\,\text{ for }j
\in\nfromz, \cr
a&:=m_1\vee \ovl{m_3},\,\,\text{ and }\,\, b:=m_2 \vee \ovl{m_4}.
\end{aligned}
\label{eqalignedXSRpPtW}
\end{equation}
For later reference, we note that the set
\begin{equation}
\set{a,\ovl a, b, \ovl b}
\label{eqshhbkvlhkkvlb}
\end{equation}
will play an important role in the paper. 
We say that a subset $X$ of a lattice \emph{freely generates} if the sublattice $S$ generated by $X$ is a free lattice with $X$ as the set of free generators. Next, we formulate our Key Lemma, which is stronger than asserting that the set in \eqref{eqshhbkvlhkkvlb} freely generates.  The proof of the Key Lemma will be postponed to Section~\ref{sectionKLproofS}.

\begin{lemma}[Key Lemma]\label{lemmakeylemma}
If $3\leq n\in\nfromo$, then 
the elements $m_j$ and $\ovl{m_j}$ for $j\in\nfromz$, $a$, $b$, $\ovl{a}$, and $\ovl{b}$ all belong to $\Sl(n)$. Furthermore, 
$\set{a,\ovl a, b, \ovl b, x_0}$ is a five-element subset of $\Sl(n)$ that freely generates. 
\end{lemma}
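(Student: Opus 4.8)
The plan is to split the Key Lemma into two parts. The first part asserts that the terms $m_j$, $\ovl{m_j}$, $a$, $b$, $\ovl a$, $\ovl b$ are symmetric, i.e.\ belong to $\Sl(n)$. For this I would argue by induction on $j$ that each $m_j$ is invariant under $\aut\sigma$ for every $\sigma\in\Sym_n$. Looking at the definition \eqref{eqalignedXSRpPtW}, the term $m_j$ is a join, over all unordered pairs $\set{i_1,i_2}$, of $p^{(i_1)}_j\wedge p^{(i_2)}_j$; and each $p^{(i)}_j$ is obtained from $x_i$ by a construction that only refers to the \emph{set} $[n)\setminus\set i$ and not to any order on it. A short induction shows $\aut\sigma(p^{(i)}_j)=p^{(\sigma(i))}_j$, hence $\aut\sigma$ permutes the joinands of $m_j$ among themselves and fixes $m_j$. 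Since $\ovl{m_j}=\delta_{\fl n}(m_j)$ and $\delta_{\fl n}$ commutes with every $\aut\sigma$ (both are determined by their action on the free generators, which they fix resp.\ permute compatibly — this is essentially Remark~\ref{remcspRtlrS}), each $\ovl{m_j}$ is symmetric as well. Then $a=m_1\vee\ovl{m_3}$, $b=m_2\vee\ovl{m_4}$, and their duals $\ovl a=\ovl{m_1}\wedge m_3$, $\ovl b=\ovl{m_2}\wedge m_4$ are symmetric because $\Sl(n)$ is a sublattice. Finally $x_0\in\Sl(n)$ fails in general, but here we only claim $x_0\in\fl n$ and that the \emph{five-element set} freely generates; note $x_0$ itself need not be symmetric, so I would double-check whether the statement really wants $x_0\in\Sl(n)$ or just $x_0\in\fl n$ — from the phrasing ``a five-element subset of $\Sl(n)$'' it seems only $a,\ovl a,b,\ovl b$ are claimed to lie in $\Sl(n)$ together with the separate fact that the enlarged set freely generates.

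The second and harder part is that $\set{a,\ovl a,b,\ovl b,x_0}$ freely generates a copy of $\fl 5$ inside $\fl n$. The standard tool is Whitman's solution of the word problem: a set $X$ in $\fl n$ freely generates iff the natural surjection $\fl X\twoheadrightarrow [X]_{\fl n}$ is injective, and by \cite{fjnmonograph} this can be verified by showing that for the canonical (lowest) terms representing the five elements, all the relevant covering/order relations that hold among them are exactly those forced in the free lattice on five generators — equivalently, one checks that whenever a join of some of the five generators lies below a meet of some others, this is already derivable from Whitman's condition \wcond. Concretely I would compute the order relations among $m_1,m_2,m_3,m_4$ and their duals (which, as foreshadowed in (plan\ref{pLaNna}), should assemble into the disjoint union of two $4$-chains $m_1<m_2<m_3<m_4$ and $\ovl{m_4}<\ovl{m_3}<\ovl{m_2}<\ovl{m_1}$ with no comparabilities between a barred and an unbarred element), then read off the order on $\set{a,\ovl a,b,\ovl b}$, and finally check that adjoining the free generator $x_0$ introduces no new relations. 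The key computational inputs are: (i) the ``ladder'' inequalities $p^{(i)}_{j}\le p^{(i)}_{j+1}$ and $m_j\le m_{j+1}$, proved by induction using \wcond; (ii) incomparabilities such as $m_j\not\le \ovl{m_k}$ for the relevant $j,k$, and $x_0$ incomparable with each of $a,\ovl a,b,\ovl b$ in the appropriate direction; these are verified by exhibiting, via Whitman's algorithm, that the putative inequality cannot hold.

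I expect the main obstacle to be the bookkeeping in (ii): showing the \emph{negative} relations. An inequality like $m_2\vee\ovl{m_4}\le$ (some meet) must be refuted by peeling off joins and meets according to \wcond\ and tracing the recursion through several layers of the definition of $p^{(i)}_j$; because $m_j$ is itself a large join of meets, each application of \wcond\ branches, and one must argue that every branch terminates in a failure, typically bottoming out at an incomparability of two distinct free generators $x_i\not\le x_{i'}$ (using \eqref{aligntxtDfkrZntNshG}, the free generators are doubly irreducible and pairwise incomparable). I would organize this by first establishing a clean normal-form / "canonical term" description of $m_j$ and $p^{(i)}_j$, proving they are already in Whitman (canonical) form or easily reduced to it, and then invoking the characterization of $\le$ on canonical forms from \cite{fjnmonograph}; this reduces the branching arguments to finite, checkable case distinctions on the index set $[n)$, uniformly in $n\ge 3$. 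The role of the hypothesis $n\ge 3$ enters precisely here: for each pair $i_1\ne i_2$ there is a third index available, which is exactly what makes the $p^{(i)}_j$'s genuinely interact and forces the chain structure of $P$; for $n=2$ the construction would degenerate.
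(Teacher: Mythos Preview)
Your first part (symmetry) is fine and matches the paper; the observation $\aut\sigma(p^{(i)}_j)=p^{(\sigma(i))}_j$ is exactly Lemma~\ref{lemmakshbzRmBgJwWm}, and you are right to flag the phrase ``subset of $\Sl(n)$'' as imprecise for $x_0$ --- the paper's proof only puts $\set{m_j,\ovl{m_j},a,b,\ovl a,\ovl b}$ into $\Sl(n)$ and then shows the five-element set freely generates in $\fl n$.

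For the second part, however, your plan misses the idea that makes the paper's argument short, and it has a real gap. The paper does \emph{not} run Whitman's algorithm directly on the five terms. Instead it first proves (Lemma~\ref{lemmamjisdoubleprime}) that each $m_j$ and $\ovl{m_j}$ is \emph{doubly prime in the sublattice $\Sl(n)$}. Once $\ovl{m_3}$ is join prime in $\Sl(n)$, an inequality $\ovl{m_3}\le g_1\vee\cdots\vee g_k$ with all $g_i\in\Sl(n)$ collapses to $\ovl{m_3}\le g_i$ for some $i$, with no descent into the nested structure of $\ovl{p^{(i)}_3}$. Together with the strict monotonicity $m_0<m_1<\cdots$ (Lemma~\ref{lemmamjisincreasing}) and the incomparabilities of Lemma~\ref{lemmam1notsubdual}, the five non-inequalities required by the criterion of Lemma~\ref{lemmafjnFrGSSt} (the paper's (\ineq1)--(\ineq5)) each become a few lines.

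Your alternative --- direct Whitman recursion --- would have to descend through up to four nested levels of the $p^{(i)}_j$ construction, branching at every step; you flag this bookkeeping as the main obstacle but give no mechanism to bound it. More importantly, your step ``then read off the order on $\set{a,\ovl a,b,\ovl b}$'' from the poset of the $m_i$'s is a genuine gap: knowing that $\set{m_1,\dots,m_4,\ovl{m_1},\dots,\ovl{m_4}}$ forms two incomparable $4$-chains tells you nothing, by itself, about inequalities among their joins and meets in $\fl n$. For that you would need those eight elements to generate the \emph{completely free} lattice over that poset --- which is exactly what the double-primeness lemma delivers (cf.\ Corollary~\ref{corolddTPsT}). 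Finally, your free-generation criterion (``whenever a join of some of the five lies below a meet of some others\ldots'') is not the right test; the correct one (Lemma~\ref{lemmafjnFrGSSt}) is that $h\not\le\bigvee_{z\in Z}z$ for every $h\in Y$ and finite $Z\subseteq Y\setminus\set h$, together with the dual.
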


For later reference, based on Remark~\ref{remcspRtlrS}, note the following trivial lemma.

\begin{lemma}\label{lemmakshbzRmBgJwWm}
For every $i\in [n)$, $j\in\nfromz$ and $\sigma\in\Sym([n))$, we have that $\aut\sigma(p_j^{(i)}) = p_j^{(\sigma(i))}$ and $\aut\sigma (m_j) = m_j$.
\end{lemma}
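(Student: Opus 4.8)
The plan is to prove the two assertions of Lemma~\ref{lemmakshbzRmBgJwWm} by a single induction on $j$, using only the definitions in \eqref{eqalignedXSRpPtW} and the explicit description of $\aut\sigma$ from Remark~\ref{remcspRtlrS}. Recall that, for an $n$-ary lattice term $t$, we have $\aut\sigma\bigl(t(x_0,\dots,x_{n-1})\bigr)=t(x_{\sigma(0)},\dots,x_{\sigma(n-1)})$; thus $\aut\sigma$ is the lattice endomorphism of $\fl n$ induced by $x_i\mapsto x_{\sigma(i)}$, and in particular it commutes with the lattice operations $\vee$ and $\wedge$. This is the only property of $\aut\sigma$ that I will use, so the argument is a purely syntactic bookkeeping of how the index set is permuted through the recursive definition of the $p^{(i)}_j$.

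First I would handle the base case $j=0$: since $p^{(i)}_0=x_i$, we get $\aut\sigma(p^{(i)}_0)=\aut\sigma(x_i)=x_{\sigma(i)}=p^{(\sigma(i))}_0$, as wanted. For the inductive step, assume $\aut\sigma(p^{(i)}_{j-1})=p^{(\sigma(i))}_{j-1}$ for all $i\in[n)$. Applying $\aut\sigma$ to the defining equation of $p^{(i)}_j$ and using that $\aut\sigma$ preserves $\vee$ and $\wedge$, one obtains
\[
\aut\sigma(p^{(i)}_j)=x_{\sigma(i)}\vee\bigvee_{\substack{i_1<i_2\\ i_1,i_2\in[n)\setminus\set i}}\bigl(p^{(\sigma(i_1))}_{j-1}\wedge p^{(\sigma(i_2))}_{j-1}\bigr).
\]
The key point is that, as the unordered pair $\set{i_1,i_2}$ ranges over all two-element subsets of $[n)\setminus\set i$, the image pair $\set{\sigma(i_1),\sigma(i_2)}$ ranges bijectively over all two-element subsets of $\sigma\bigl([n)\setminus\set i\bigr)=[n)\setminus\set{\sigma(i)}$, since $\sigma$ is a permutation of $[n)$. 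Hence the right-hand side is exactly the defining expression for $p^{(\sigma(i))}_j$, which completes the induction. The claim $\aut\sigma(m_j)=m_j$ then follows by the same reindexing: $\aut\sigma$ sends $\bigvee_{i_1<i_2}(p^{(i_1)}_j\wedge p^{(i_2)}_j)$ to $\bigvee_{i_1<i_2}(p^{(\sigma(i_1))}_j\wedge p^{(\sigma(i_2))}_j)$, and as $\set{i_1,i_2}$ runs over all two-element subsets of $[n)$ so does $\set{\sigma(i_1),\sigma(i_2)}$, so the join is unchanged.

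There is no real obstacle here; the only thing to be careful about is the cosmetic mismatch between the ordered index convention $i_1<i_2$ used to write the joins and the fact that $\sigma$ need not be order-preserving, which is why the argument must be phrased in terms of \emph{unordered} pairs $\set{i_1,i_2}$ rather than ordered ones. Since join is commutative, rewriting $\bigvee_{i_1<i_2}$ as a join over two-element subsets is harmless, and after that the surjectivity/injectivity of $\set{i_1,i_2}\mapsto\set{\sigma(i_1),\sigma(i_2)}$ on two-element subsets is immediate. I would remark that, strictly speaking, these equalities hold at the level of \emph{terms} (syntactically), hence a fortiori as elements of $\fl n$, consistent with the convention in \eqref{eqpbxtrTtRlMnts}.
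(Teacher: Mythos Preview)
Your proof is correct and follows essentially the same approach as the paper's own proof: the paper condenses the argument into the single observation that in \eqref{eqalignedXSRpPtW} each stipulation $i_1<i_2$ may be replaced by $i_1\neq i_2$, which is exactly your point about reindexing the joins over unordered two-element subsets. The only (harmless) overstatement is your final remark that the equalities hold ``syntactically'' at the level of terms; after reindexing, the order of the joinands may differ, so the identification is via commutativity and associativity of $\vee$ in $\fl n$, not literal term equality.
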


\begin{proof}[Proof of Lemma~\ref{lemmakshbzRmBgJwWm}]
The first equality above follows from the fact that in \eqref{eqalignedXSRpPtW},  each stipulation of the form $i_1<i_2$ can be replaced by $i_1\neq i_2$. The second equality follows from the first one.
\end{proof}

\section{The proof of the Key Lemma}\label{sectionKLproofS}
From the theory of free lattices, we only use three basic facts, 
which we recall below as lemmas; all of them can be found  in Freese, Je\v zek and Nation~\cite{fjnmonograph}. An element $u$ of a lattice $L$ is \emph{join prime} if for all $k\in\nfromo$ and $x_0,\dots, x_{k-1}\in L$, the inequality $u\leq x_0\vee\dots\vee x_{k-1}$ implies that $u\leq x_i$ for some $i\in[k)$. Meet prime elements are defined dually. An element is \emph{doubly prime} if it is join prime and meet prime.

\begin{lemma}[{Freese, Je\v zek and Nation~\cite[Corollary 1.5]{fjnmonograph}}]\label{lemmagnJMggDprm} 
In every free lattice $\fl X$, the free generators are doubly prime elements.
\end{lemma}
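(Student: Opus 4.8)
The plan is to read both halves of the statement off Whitman's recursive solution of the word problem for free lattices. The only fact I need from that solution is the following pair of ``reduction rules'': if $u$ is a free generator of $\fl X$ and $v_1,v_2$ are (terms representing) elements of $\fl X$, then $u\le v_1\vee v_2$ holds in $\fl X$ if and only if $u\le v_1$ or $u\le v_2$, and dually $v_1\wedge v_2\le u$ if and only if $v_1\le u$ or $v_2\le u$. Since any term involves only finitely many generators, and the free lattice on a subset $Y\subseteq X$ embeds into $\fl X$ (the inclusion $Y\hookrightarrow\fl X$ extends to a homomorphism $\fl Y\to\fl X$ which has a retraction, hence is injective), there is no loss in assuming $X$ finite when applying this, so Whitman's algorithm applies verbatim.

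Granting this, I would fix a free generator $x_a\in X$ and suppose $x_a\le y_0\vee\dots\vee y_{k-1}$ for some $k\in\nfromo$ and $y_0,\dots,y_{k-1}\in\fl X$. Rewriting the right-hand side as the left-associated term $(\dots(y_0\vee y_1)\vee\dots\vee y_{k-1})$ and applying the ``join'' reduction rule above $k-1$ times, by an obvious induction on $k$ (the case $k=1$ being vacuous), one gets $x_a\le y_i$ for some $i<k$. Hence $x_a$ is join prime.

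For meet-primeness I would invoke the natural dual automorphism $\delta_{\fl X}$ of $\fl X$ from \eqref{eqpbxnatduAl}: it is an order-reversing bijection of $\fl X$ onto itself that fixes every free generator, so in particular $\delta_{\fl X}(x_a)=x_a$. An order anti-automorphism sends join-prime elements to meet-prime elements: indeed, if $\bigwedge_i z_i\le \delta_{\fl X}(x_a)$, then applying the inverse anti-automorphism and swapping meets with joins gives $x_a\le\bigvee_i\delta_{\fl X}^{-1}(z_i)$, so join-primeness of $x_a$ yields $x_a\le\delta_{\fl X}^{-1}(z_j)$, i.e.\ $z_j\le\delta_{\fl X}(x_a)$, for some $j$. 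Thus the join-primeness just established forces $x_a=\delta_{\fl X}(x_a)$ to be meet prime as well, so $x_a$ is doubly prime; as $a$ was arbitrary, all free generators of $\fl X$ are doubly prime. (Alternatively one can skip $\delta_{\fl X}$ and simply repeat the second paragraph dually, using the ``meet'' reduction rule in place of the ``join'' one.)

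The only substantial ingredient is Whitman's word-problem theorem itself (equivalently, the fact that the relation defined by those recursive rules is genuinely a partial order whose term algebra modulo it is $\fl X$); that is where freeness is really used, and a fully self-contained account would have to develop it first. Everything after that input is bookkeeping, so I expect no real obstacle beyond being careful that the reduction rules are applied only in the cases where their hypotheses hold.
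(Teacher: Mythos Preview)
Your argument is correct and is essentially the standard one: Whitman's recursive description of the order on $\fl X$ gives exactly the reduction rules you quote, and join-primeness of a generator follows immediately, with meet-primeness by duality (either via $\delta_{\fl X}$ as you do, or by the dual rule directly). There is nothing to compare it against, however: the paper does not prove this lemma at all but simply quotes it as Corollary~1.5 of Freese, Je\v zek and Nation~\cite{fjnmonograph}, along with Whitman's condition and the free-generation criterion, as one of three background facts used without proof.
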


The following statement says that free lattices satisfy \emph{Whitman's condition} \wcond{}, see Whitman~\cite{whitman}.

\begin{lemma}[{Freese, Je\v zek and Nation~\cite[Theorem 1.8]{fjnmonograph}}]\label{lemmaWcnDtLgnB} 
For arbitrary elements $u_1,\dots,u_r,v_1,\dots,v_s $ of a free lattice  $\Fl(X)$,
\begin{enumerate}
{\setlength\itemindent{0.5pt}
\item[\wcond] The inequality 
$u=u_1 \wedge\dots\wedge u_r \leq v_1 \vee \dots \vee v_s = v$  implies that either $u_i\leq v$ for some subscript $i$, or $u \leq v_j$ for some $j$.
}
\end{enumerate} 
\end{lemma}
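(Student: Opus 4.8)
The plan is to prove \wcond{} by realizing $\Fl(X)$ concretely, so that the assertion becomes almost a defining feature of the construction rather than something to be extracted after the fact. Let $T$ be the set of all lattice terms over $X$, where each term is written in exactly one of three shapes: a \emph{generator} $x\in X$; a \emph{formal join} $s=s_1\vee\dots\vee s_m$ with $m\ge 2$ and no $s_i$ a formal join; or a \emph{formal meet} $s=s_1\wedge\dots\wedge s_k$ with $k\ge 2$ and no $s_i$ a formal meet. I would define a relation $\sqsubseteq$ on $T$ by simultaneous recursion on the total number of operation symbols in the two arguments, using precisely these clauses: $s\sqsubseteq t$ holds iff either $s$ is a formal join and $s_i\sqsubseteq t$ for every $i$; or $t$ is a formal meet and $s\sqsubseteq t_j$ for every $j$; or, when neither shape applies on the appropriate side, either $s$ is a formal meet with $s_i\sqsubseteq t$ for some $i$, or $t$ is a formal join with $s\sqsubseteq t_j$ for some $j$; and for two generators $x\sqsubseteq y$ iff $x=y$. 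The crucial point is that the third clause is precisely \wcond{} read at the level of terms; applied to a generator $s$ or $t$ it is exactly join- or meet-primeness, so Lemma~\ref{lemmagnJMggDprm} is recovered en route.

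Next I would verify that $T/{\approx}$, where $s\approx t$ means $s\sqsubseteq t\sqsubseteq s$, is the free lattice on $X$. Reflexivity is immediate from the clauses. The first substantial step is transitivity: assuming $s\sqsubseteq t$ and $t\sqsubseteq w$, I would run a nested induction on the combined term complexity, splitting into cases according to which clauses witness the two hypotheses; the awkward cases are those in which the middle term $t$ is consumed by different clauses on its two sides, and these are exactly where the induction hypothesis on shorter terms must be invoked. Granting transitivity, $\sqsubseteq$ descends to a partial order on $T/{\approx}$, and a further short induction shows that a formal join $s_1\vee\dots\vee s_m$ is the genuine least upper bound of the classes of $s_1,\dots,s_m$, and dually for meets; hence $T/{\approx}$ is a lattice whose operations agree with the formal ones.

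For the universal property I would take any lattice $M$ and any map $f\colon X\to M$, and consider the evaluation map $\hat f\colon T\to M$ sending a term to its value in $M$. An induction along the clauses shows that $s\sqsubseteq t$ forces $\hat f(s)\le\hat f(t)$: each clause is a valid lattice inference, and in the third clause one uses $\hat f(s)=\bigwedge_i\hat f(s_i)\le\hat f(s_i)\le\hat f(t)$, or dually $\hat f(s)\le\hat f(t_j)\le\bigvee_j\hat f(t_j)=\hat f(t)$. Thus $\hat f$ factors through $T/{\approx}$ as a lattice homomorphism; since distinct generators are $\sqsubseteq$-incomparable, they stay distinct in the quotient, so $T/{\approx}$ is the free lattice on $X$ and $\Fl(X)\cong T/{\approx}$. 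Then \wcond{} for canonical meets and joins is exactly the third clause of the definition, and to recover the statement for \emph{arbitrary} representations $u=u_1\wedge\dots\wedge u_r$ and $v=v_1\vee\dots\vee v_s$ one rewrites $u,v$ in canonical form and checks that the conclusion survives the absorption of nested meets into $u$ and nested joins into $v$, which is routine.

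The main obstacle I anticipate is the transitivity of $\sqsubseteq$: the simultaneous recursion and the interplay of the four clauses make the case analysis delicate, and one must be scrupulous that every appeal to the induction hypothesis genuinely decreases the combined complexity. A secondary, lesser difficulty is confirming that the formal operations are honest lattice operations, that is, that no unexpected collapses occur in $T/{\approx}$; but this follows from the same recursive clauses once transitivity is in hand.
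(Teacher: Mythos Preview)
The paper does not supply its own proof of this lemma; it is quoted verbatim from Freese, Je\v zek and Nation~\cite[Theorem 1.8]{fjnmonograph} and used as a black box throughout. So there is no ``paper's own proof'' to compare against.

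Your outline is essentially the classical Whitman construction as presented in the cited monograph (and in Whitman's original 1941 paper): build the free lattice as terms modulo a recursively defined quasi-order whose clauses encode \wcond{} and the double-primeness of generators, verify transitivity by a nested induction on term complexity, and then check the universal property. Your identification of transitivity as the crux is accurate; the case split when the middle term $t$ is simultaneously a formal meet consumed from the left and a formal join consumed from the right is where the induction genuinely earns its keep. The only place I would tighten the exposition is the passage from \wcond{} for terms in your canonical alternating form to \wcond{} for \emph{arbitrary} $u_1\wedge\dots\wedge u_r$ and $v_1\vee\dots\vee v_s$: you wave at ``absorption of nested meets into $u$'', but concretely you should note that if some $u_i$ is itself a meet $u_i=\bigwedge_\ell w_\ell$, then $u=\bigwedge_{i'\neq i}u_{i'}\wedge\bigwedge_\ell w_\ell$ in canonical form, and the conclusion ``some $w_\ell\le v$'' implies ``$u_i\le v$'' a fortiori; dually on the right. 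With that small addition the argument is complete and matches the reference the paper cites.
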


Next, we describe whether a subset of a free lattice generates freely or not.

\begin{lemma}[{Freese, Je\v zek and Nation~\cite[Corollary 1.13]{fjnmonograph}}]\label{lemmafjnFrGSSt}
A nonempty subset $Y$ of $\Fl(X)$ generates freely if and only if for all $h\in Y$ and all finite subsets $Z\subseteq Y$, the following condition and its dual hold.
$$h\notin Z \;\; \text{ implies }\;\; h\nleq \bigvee_{z\in Z}z.$$
\end{lemma}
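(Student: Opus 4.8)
The plan is to prove both implications, with essentially all the content sitting in the ``if'' direction. Throughout, write $\phi\colon\Fl(Y)\to [Y]_{\Fl(X)}$ for the canonical surjection sending each abstract generator $\hat h$ to the corresponding element $h\in Y$; then $Y$ generates freely exactly when $\phi$ is an isomorphism.

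For the ``only if'' direction, suppose $\phi$ is an isomorphism. In $\Fl(Y)$ the generators are doubly prime by Lemma~\ref{lemmagnJMggDprm} and form an antichain. Given $h\in Y$ and a finite $Z\subseteq Y$ with $h\notin Z$, join-primeness of $\hat h$ together with the antichain property yields $\hat h\not\leq\bigvee_{z\in Z}\hat z$ in $\Fl(Y)$, since otherwise $\hat h\leq \hat z$ and hence $\hat h=\hat z$ for some $z\in Z$. Applying the order-isomorphism $\phi$, and using that joins in the sublattice $[Y]_{\Fl(X)}$ coincide with those in $\Fl(X)$, I obtain $h\not\leq\bigvee_{z\in Z}z$. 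The dual condition follows the same way from meet-primeness of $\hat h$.

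For the ``if'' direction I would first observe that generating freely is a finitary condition, since any two elements of $\Fl(Y)$ involve only finitely many generators; hence it suffices to treat finite $Y$. The task is to show that $\phi$ is injective, equivalently order-reflecting: $\phi(s)\leq\phi(t)$ implies $s\leq t$ for all terms $s,t$ over $Y$. The natural strategy is to run in parallel the recursive Whitman comparison deciding $s\leq t$ in $\Fl(Y)$ and the comparison deciding $\phi(s)\leq\phi(t)$ in $\Fl(X)$, and to check that at every node the two recursions branch identically. The steps for a join on the left, a meet on the right, and a meet below a join are valid in any lattice, respectively by Whitman's condition; moreover \wcond{} is inherited by the sublattice $[Y]_{\Fl(X)}$, because the relevant meets and joins there agree with those of $\Fl(X)$ (Lemma~\ref{lemmaWcnDtLgnB}). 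Consequently the two recursions can differ only at the leaf-type steps involving a generator: ``generator below generator,'' ``generator below a join,'' and ``a meet below a generator.'' The first matches precisely when $Y$ is an antichain, i.e.\ the singleton-$Z$ instance of the hypothesis, and the remaining two match precisely when each $h\in Y$ is join prime, respectively meet prime, in $[Y]_{\Fl(X)}$. Thus the ``if'' direction reduces to: under the hypothesis, every $h\in Y$ is doubly prime in $[Y]_{\Fl(X)}$.

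The hypothesis delivers exactly this primeness, but only against \emph{joins of generators}, namely $h\leq\bigvee_{z\in Z}z$ with $Z\subseteq Y$ forces $h\in Z$. The remaining, and main, difficulty is to upgrade this to primeness against arbitrary elements of $[Y]_{\Fl(X)}$: from $h\leq\phi(t_1)\vee\dots\vee\phi(t_k)$ to $h\leq\phi(t_j)$ for some $j$, where the $t_j$ are arbitrary terms over $Y$. I would attempt this by induction on the total complexity of the $t_j$: after flattening the joins, each $\phi(t_j)$ is the image of a generator or of a proper meet; the all-generator case is the hypothesis verbatim, while the mixed case is to be reduced, via \wcond{} in $\Fl(X)$, to the join-irreducible structure of the $\phi(t_j)$. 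I expect this upgrade to be the crux. The obstacle is that the images $h$ need not themselves be prime, nor even irreducible, in $\Fl(X)$ (in the present paper, for instance, $\ovl a$ and $\ovl b$ are meet-reducible), so one cannot simply invoke the ambient primeness of the generators of $\Fl(X)$; the argument must exploit the hypothesis on $Y$ globally, and keeping the meet--join interactions under control through Whitman's condition is where the real care is required.
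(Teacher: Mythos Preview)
The paper does not prove Lemma~\ref{lemmafjnFrGSSt} at all; it is quoted verbatim from Freese, Je\v zek and Nation~\cite[Corollary~1.13]{fjnmonograph} as one of the three facts about free lattices that the paper takes for granted. So there is no ``paper's own proof'' to compare against.

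On the merits of your proposal: the ``only if'' direction is fine, and for the ``if'' direction you have the right architecture (run the Whitman recursion for $\Fl(Y)$ and for $[Y]_{\Fl(X)}$ in parallel and check that they branch identically). You also correctly isolate the one nontrivial node, namely upgrading the hypothesis ``$h\nleq\bigvee Z$ for $Z\subseteq Y$'' to full join-primeness of $h$ in $[Y]_{\Fl(X)}$. What you have not done is close that gap, and the specific induction you sketch does not go through as written. In the mixed case $h\leq \phi(u\wedge v)\vee(\text{rest})$ you propose to ``reduce via \wcond{} in $\Fl(X)$,'' but \wcond{} requires the \emph{left} side to be presented as a meet; since $h$, viewed as an element of $\Fl(X)$, may well be join-reducible (as you yourself observe for $\ovl a,\ovl b$), applying \wcond{} forces you to decompose $h$ inside $\Fl(X)$, and the resulting pieces need not lie in $[Y]$, so your induction hypothesis no longer applies to them. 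In other words, the obstacle you flag in your last paragraph is exactly the point at which your proposed argument stalls, and you have not supplied the idea that gets past it. The proof in \cite{fjnmonograph} closes this gap using the structure theory developed earlier in their Chapter~1 (beyond just \wcond{}); your outline is the right shape but stops short of the key step.
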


The general assumption for the rest of this section is
that $3\leq n\in\nfromo$ and $\fl n=\Fl(x_0,\dots, x_{n-1})=\Fl(x_i: i<n)$. 
A (lattice) term $t$ is called a {\it near-unanimity lattice term} or, shortly, an {\it NU-term} if it satisfies
\[
t(y, x, \dots, x)=t(x,y,x, \dots, x)= \dots = t(x, \dots, x,y)=x.
\]
Since the lattice operations are idempotent, it is obvious that
\begin{equation}
\text{the join and the meet of two $n$-ary NU-terms are NU-terms.}
\label{eqtxtjmtwNUtrms}
\end{equation}
If $t_1$ and $t_2$ are $n$-ary lattice terms such that $t_1=t_2$ in $\fl n$, see \eqref{eqpbxtrTtRlMnts}, then $t_1\in\Sl(n)$ iff $t_2\in \Sl(n)$. Also, for $t_1=t_2\in\fl n$,  $t_1$ is an NU-term iff so is $t_2$. So convention \eqref{eqpbxtrTtRlMnts} still applies.

\begin{lemma}
\label{lemmasymNU} If $t\in\Sl(n)=\Sl(x_0,\dots, x_{n-1})$ such that none of the equalities
$t(x_0,\dots, x_{n-1})=\bigwedge_{i\in[n)}x_i$ and $t(x_0,\dots, x_{n-1})=\bigvee_{i\in[n)}x_i$ holds in $\Fl(n)$, then $t$ is a near-unanimity term. 
\end{lemma}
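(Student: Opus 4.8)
The plan is to prove the contrapositive in a useful form: if $t\in\Sl(n)$ is not an NU-term, then $t$ equals either $\bigwedge_{i\in[n)}x_i$ or $\bigvee_{i\in[n)}x_i$ in $\Fl(n)$. Being non-NU means that for some coordinate $k\in[n)$ the equality $t(x,\dots,x,y,x,\dots,x)=x$ fails in $\Fl(2)=\Fl(x,y)$ with $y$ in position $k$; call the resulting element $s\in\Fl(2)$, so $s\neq x$. By symmetry — here is where $t\in\Sl(n)$ enters — plugging $y$ into \emph{any} single coordinate and $x$ elsewhere gives the same element $s$, because any two such substitutions differ by an automorphism $\aut\sigma$ of $\Fl(n)$ (a transposition) which fixes $t$; more precisely $\aut\sigma(t)=t$ gives $t(x_{\sigma(0)},\dots,x_{\sigma(n-1)})=t(x_0,\dots,x_{n-1})$ as terms-as-elements, and then one specializes. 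So the single relevant piece of data is this $s\in\Fl(2)$, and I want to show $s\in\{x\wedge y,\ x\vee y\}$; once that is known, specializing $t$ by setting all of $x_0,\dots,x_{n-1}$ distinct but recovering the value from the two-variable collapses will force $t=\bigwedge x_i$ or $t=\bigvee x_i$.

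The heart of the argument is analyzing $s\in\Fl(x,y)$ with $s\ne x$. Use \eqref{aligntxtkrdSkFrZnbT}: $s$ is join irreducible or meet irreducible; say meet irreducible (the other case is dual, and dualizing swaps $\wedge$ and $\vee$ which is exactly the symmetry of the claimed conclusion). If $s$ is a free generator then $s\in\{x,y\}$ by Lemma~\ref{lemmagnJMggDprm}-type considerations; $s=x$ is excluded, and $s=y$ would say $t(x,\dots,x,y,x,\dots,x)=y$, which together with the symmetry forces (via another substitution, e.g.\ putting $y$ into two coordinates) a contradiction with idempotency, or else directly gives $t=x_k$ depending on coordinate, which is not symmetric unless $n$ collapses — in any case this branch is handled by a short direct check. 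Otherwise $s=s_1\wedge\dots\wedge s_r$ nontrivially ($r\ge 2$), and I substitute $y:=x$ to get $x = s_1(x,x)\wedge\dots$, so by join/meet irreducibility of the $s_i(x,x)\in\Fl(x)=\{x\}$... actually $\Fl(x)$ is the one-element lattice only if we allow that, so rather: evaluating at $y=x$ sends $s$ to $x$, hence each $s_i$ evaluated at $y=x$ is $\ge x$, i.e.\ equals $x$ in the trivial one-generator lattice; this means each $s_i$ is an NU-term in two variables (the only two-variable terms collapsing to $x$ at $y=x$ that are "below" in the right sense). Then applying Whitman's condition \wcond{} to $x\le s = \bigwedge s_i$ read the correct way, plus join-primeness of $x$ from Lemma~\ref{lemmagnJMggDprm}, pins down the possible shapes of the $s_i$ and forces $s=x\wedge y$.

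Concretely, the key computational step I expect to be the main obstacle is the finite case analysis of two-variable terms: showing that the only elements $s\in\Fl(x,y)$ with $s\ne x$, $s\le x\vee y$... hmm, rather the only ones that can arise as a symmetric-term collapse and are not $x\vee y$ or $x\wedge y$ lead to contradictions. I would organize this by: (1) reduce to $s$ meet irreducible via duality; (2) write $s=\bigwedge_{i}s_i$ with each $s_i$ join irreducible, hence each $s_i$ is $x$, $y$, or a join; (3) use that $s|_{y:=x}=x$ to kill $s_i$'s equal to $x$ and to constrain the joins; (4) use Whitman plus doubly-primeness of $x$ and $y$ to conclude that in fact $s=x\wedge y$, ruling out $s=y$ by invoking the symmetry of $t$ in two coordinates at once. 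After that, the passage from "the two-variable collapse of $t$ is $x\wedge y$" to "$t=\bigwedge_{i\in[n)}x_i$" is an induction on term structure of $t$ (every appearance of a variable, when all others are identified, behaves like $x\wedge y$, so $t$ is a meet of variables; symmetry then forces all $n$ of them to appear), and dually for the join case. I will lean on Lemma~\ref{lemmaWcnDtLgnB}, Lemma~\ref{lemmagnJMggDprm}, and \eqref{aligntxtkrdSkFrZnbT} throughout, and use Lemma~\ref{lemmakshbzRmBgJwWm}'s style of "automorphisms permute coordinates" reasoning to transport substitutions.
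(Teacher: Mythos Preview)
Your contrapositive strategy is equivalent to the paper's direct argument, but the execution has two real gaps that the paper closes with much simpler tools than the Whitman analysis you sketch.

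First, you never use that $\Fl(x,y)$ has exactly four elements: $x$, $y$, $x\wedge y$, $x\vee y$. This is the whole point. Once you know your collapsed element $s=t(x,\dots,x,y)$ lies in this four-element set, the case analysis is immediate; your attempt to prove $s\in\{x\wedge y,x\vee y\}$ by writing $s=\bigwedge s_i$, invoking \eqref{aligntxtkrdSkFrZnbT}, and applying \wcond{} is reinventing this elementary fact in a confused way (the passage ``actually $\Fl(x)$ is the one-element lattice only if we allow that'' shows you sensed something was off). Also, your elimination of $s=y$ is not right: putting $y$ into two coordinates does not by itself contradict idempotency. The paper kills $s=y$ with a one-line computation in the two-element lattice using monotonicity plus symmetry: $1=t(0,\dots,0,1)\le t(0,1,\dots,1)=t(1,\dots,1,0)=0$.

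Second, and more seriously, your passage from ``the two-variable collapse is $x\wedge y$'' to ``$t=\bigwedge_i x_i$'' via ``induction on term structure'' does not work as stated: if $t=t_1\vee t_2$, the subterms $t_1,t_2$ need not lie in $\Sl(n)$, so the induction hypothesis is unavailable. What the paper does instead is a single clever substitution: from $t(x,\dots,x,y)=x\wedge y$ one gets
\[
t(x_0,\dots,x_{n-1})\le t(x_0\vee\cdots\vee x_{n-2},\dots,x_0\vee\cdots\vee x_{n-2},x_{n-1})=(x_0\vee\cdots\vee x_{n-2})\wedge x_{n-1}\le x_{n-1},
\]
and then the symmetry of $t$ (via the easy observation \eqref{eqpbxfhszRtdzBg}: any $g\in\Sl(n)$ with $g\le x_i$ must be $0_{\Sl(n)}$) forces $t=\bigwedge_i x_i$. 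This substitution-plus-symmetry step is the genuine idea you are missing; once you have it and the four-element description of $\Fl(2)$, the proof is about five lines.
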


\begin{proof} First, we are going to show that%
\begin{align}
\parbox{8.5cm}{if $g\in \Sl(n)$  and $x_i\leq g$ in $\fl n$ for some $i\in[n)$, then $g=1_{\Sl(n)}=x_0\vee\dots\vee x_{n-1}$, and dually.}
\label{eqpbxfhszRtdzBg}
\end{align}
Assume that $x_i\leq g$. For $i'\in[n)$, take a permutation $\sigma\in\Sym([n))$ with $\sigma(i)=i'$, then $x_{i'}=\aut\sigma(x_i)\leq\aut\sigma(g)=g$, which immediately gives that 
$1_{\Sl(n)}=\bigvee_{i'\in[n)}x_{i'} \leq g\leq 1_{\Sl(n)}$, implying \eqref{eqpbxfhszRtdzBg}. Next,
assume that $t$ satisfies the assumptions of the lemma. As a binary lattice term, $t(x,\dots,x,y)$  equals one of  $x$, $y$, $x\wedge y$, and $x\vee y$ in $\Fl(x,y)$.
If we had that, in $\Fl(x,y)$,  $t(x,\dots,x,y)=x\wedge y$, then 
\begin{align*}
t(x_0,\dots,x_{n-1})
&\leq t(x_0\vee \dots\vee x_{n-2},\dots,x_0\vee \dots\vee x_{n-2},x_{n-1})\cr
&\leq (x_0\vee \dots\vee x_{n-2})\wedge x_{n-1}\leq x_{n-1}
\end{align*}
together with \eqref{eqpbxfhszRtdzBg} would yield that $t=0_{\Sl(n)}=x_0\wedge \dots\wedge x_{n-1}$, a contradiction.  
Hence, $t(x,\dots,x,y)$ is distinct from $x\wedge y$, and it is distinct also from $x\vee y$ by duality. The case $t(x,x,\dots,x,y)=y$ is impossible, because it would imply that $
1=t(0,\dots,0,1)\leq t(0,1,\dots,1)=t(1,\dots,1,0)=0$ holds in the two-element lattice $\mathbf{2}$, which is a contradiction. Hence, $t(x,x,\dots,x,y)=x$, which means that $t$ is an NU-term since it is symmetric.
\end{proof}

\begin{figure}[th]
\includegraphics [scale=0.95] {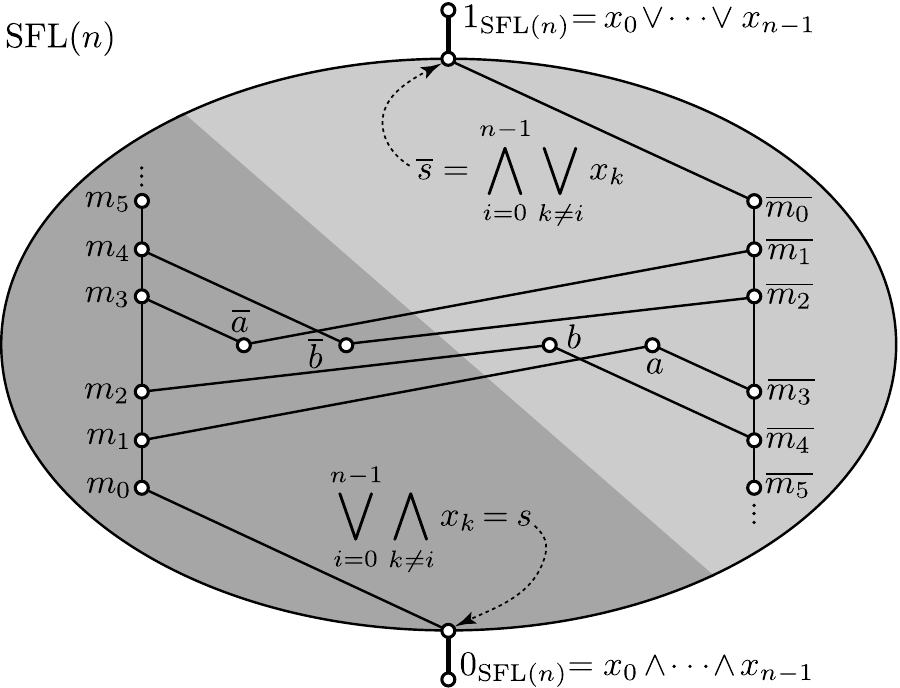}
\caption{The lattice $\Sl(n)$ for $n>3$}
\label{figConstr}
\end{figure}

\begin{lemma}
\label{lemmabottopNu} 
There is exactly one  atom in $\Sl(n)$, and it  is 
\[s:=\bigvee_{i\in[n)}\bigwedge_{i'\in [n)\setminus\set i}x_{i'}\,.
\] 
The only coatom of $\Sl(n)$ is $\ovl{s}$. Except from its bottom 
$0_{\Sl(n)}=\bigwedge_{i<n} x_i$ and  top $1_{\Sl(n)}=\bigvee_{i<n} x_i$, every element of  $\,\Sl(n)$ is in the interval $[s,\ovl{s}]$.
\end{lemma}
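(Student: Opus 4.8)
Here is how I would prove Lemma~\ref{lemmabottopNu}. The plan is to extract everything from Lemma~\ref{lemmasymNU} by substituting a suitable meet of variables into the near-unanimity identities.

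First I would record the easy order-theoretic facts. Both $s$ and its dual $\ovl s=\bigwedge_{i\in[n)}\bigvee_{i'\in[n)\setminus\set i}x_{i'}$ are symmetric terms, hence $s,\ovl s\in\Sl(n)$; also $0_{\Sl(n)}=\bigwedge_{i<n}x_i$ and $1_{\Sl(n)}=\bigvee_{i<n}x_i$, since these two symmetric elements are below, respectively above, every element of $\fl n$. Next, since $n\geq 3$, for any $i,k\in[n)$ there is a $j\in[n)\setminus\set{i,k}$, so $\bigwedge_{i'\neq i}x_{i'}\leq x_j\leq\bigvee_{i'\neq k}x_{i'}$; taking the join over $i$ and the meet over $k$ gives $s\leq\ovl s$. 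Finally, $x_1\wedge\dots\wedge x_{n-1}\leq s$ but $x_1\wedge\dots\wedge x_{n-1}\nleq x_0$, because $x_0$ is meet prime by Lemma~\ref{lemmagnJMggDprm} and the generators are pairwise incomparable; hence $s\nleq\bigwedge_{i<n}x_i$, that is, $s\neq 0_{\Sl(n)}$, and dually $\ovl s\neq 1_{\Sl(n)}$. Combining these, $0_{\Sl(n)}<s\leq\ovl s<1_{\Sl(n)}$, so in particular $\Sl(n)$ has at least three elements.

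The heart of the argument is the claim that $s\leq t\leq\ovl s$ for every $t\in\Sl(n)\setminus\set{0_{\Sl(n)},1_{\Sl(n)}}$. By Lemma~\ref{lemmasymNU}, such a $t$ is a symmetric near-unanimity term, so $t(y,x,\dots,x)=x$ in $\fl n$. I would substitute $y:=x_0$ and $x:=x_1\wedge\dots\wedge x_{n-1}$ into this identity to get
\[
t\bigl(x_0,\,x_1\wedge\dots\wedge x_{n-1},\,\dots,\,x_1\wedge\dots\wedge x_{n-1}\bigr)=x_1\wedge\dots\wedge x_{n-1}.
\]
Since $t$ is order preserving and $x_1\wedge\dots\wedge x_{n-1}\leq x_j$ for $1\leq j\leq n-1$, the left-hand side is $\leq t(x_0,x_1,\dots,x_{n-1})=t$, whence $x_1\wedge\dots\wedge x_{n-1}=\bigwedge_{i'\in[n)\setminus\set0}x_{i'}\leq t$. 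For an arbitrary $i\in[n)$, choosing $\sigma\in\Sym([n))$ with $\sigma(0)=i$ and applying the automorphism $\aut\sigma$, together with $\aut\sigma(t)=t$ and $\aut\sigma\bigl(\bigwedge_{i'\neq0}x_{i'}\bigr)=\bigwedge_{i'\neq i}x_{i'}$, gives $\bigwedge_{i'\neq i}x_{i'}\leq t$. Taking the join over all $i$ yields $s\leq t$; the inequality $t\leq\ovl s$ follows dually.

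The rest is bookkeeping. The claim says precisely that $\Sl(n)\setminus\set{0_{\Sl(n)},1_{\Sl(n)}}\subseteq[s,\ovl s]$. Moreover, any atom of $\Sl(n)$ lies in $\Sl(n)\setminus\set{0_{\Sl(n)},1_{\Sl(n)}}$ — it is not $0_{\Sl(n)}$, and it is not $1_{\Sl(n)}$ because $\Sl(n)$ has more than two elements — so by the claim it is $\geq s$, hence equals $s$. And $s$ itself is an atom: if $c\in\Sl(n)$ with $0_{\Sl(n)}<c\leq s$, then $c\in\Sl(n)\setminus\set{0_{\Sl(n)},1_{\Sl(n)}}$, so $s\leq c$ by the claim and $c=s$. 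Thus $s$ is the unique atom, and dually $\ovl s$ is the unique coatom of $\Sl(n)$. The only step that requires real thought is the choice of substitution in the near-unanimity identity: collapsing the variables $x_1,\dots,x_{n-1}$ to their meet is exactly what turns $t(y,x,\dots,x)=x$ into a lower bound for $t$, after which monotonicity and the symmetry of $t$ (via the permutations $\aut\sigma$) finish the job; everything else is routine manipulation and duality.
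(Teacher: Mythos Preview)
Your proof is correct and follows essentially the same approach as the paper: both use Lemma~\ref{lemmasymNU} to reduce to near-unanimity terms, then substitute the meet $\bigwedge_{i'\neq i}x_{i'}$ into all but one slot of the NU identity and use monotonicity to obtain $\bigwedge_{i'\neq i}x_{i'}\leq t$, whence $s\leq t$ after joining over $i$. The paper performs this substitution directly for each $i$, whereas you do it for $i=0$ and then transport the inequality by the automorphisms $\aut\sigma$; this is cosmetic, and your added preliminary remarks and bookkeeping simply make explicit what the paper leaves to the reader.
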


The statement of this lemma for $3<n\in\nfromo$ is illustrated by Figure~\ref{figConstr}, where only the two thick edges stand for coverings in $\Sl(n)$; the thin lines indicate comparabilities that need not be coverings. (These comparabilities will be proved later; see Lemma~\ref{lemmam1notsubdual}.)  The reflection across the symmetry center point, which is not indicated in the figure, represents the restriction of  $\delta_{\fl n}$ to $\Sl(n)$. We could obtain a similar figure for $n=3$ by removing the vertices $m_0$ and $\ovl{m_0}$ and decreasing the subscripts of the remaining vertices by $1$; see Lemma~\ref{lemmam1notsubdual}.

\begin{proof}
By Lemma \ref{lemmasymNU} and the duality principle, we need to show  only that $t\geq s$ holds for every near-unanimity term $t\in\Sl(n)$. This follows from the fact that for each $i\in[n)$, 
\begin{align*}
t&(x_0,\dots,x_{i-1},x_i,x_{i+1},\dots x_{n-1}) \cr
&\geq 
t(\bigwedge_{i'\neq i}x_{i'},\dots,\bigwedge_{i'\neq i}x_{i'},x_i,\bigwedge_{i'\neq i}x_{i'},\dots,\bigwedge_{i'\neq i}x_{i'})=\bigwedge_{i'\neq i}x_{i'}.\qedhere
\end{align*}
\end{proof}

\begin{figure}[ht]
\includegraphics [scale=0.95] {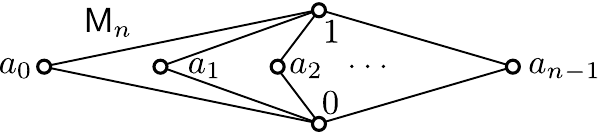}
\caption{The lattice $\ssf M_n$}
\label{figMn}
\end{figure}

In order to get some preliminary insight into $\Sl(n)$, note the following. As usual,  $\ssf M_n$ denotes $(n+2)$-element lattice given in Figure \ref{figMn}. 
For a permutation $\pi\colon [n)\to [n)$, let $\pi_X$ and $\pi_A$ denote the permutation of $\set{x_i:i<n}$ and that of $\set{a_i:i<n}$ defined by $\pi_X(x_i)=x_{\pi(i)}$ and  $\pi_A(a_i)=a_{\pi(i)}$ for all $i<n$, respectively.  These permutations uniquely extend to automorphisms $\aut\pi_X\in\Aut({\Sl(n)})$ and $\aut\pi_A\in\Aut(\ssf M_n)$, respectively. Consider the  natural homomorphism
\begin{equation}
\eta\colon \fl n\to \ssf M_n \text{ defined by }\eta(t(x_0,\dots,x_{n-1}))=t(a_0,\dots,a_{n-1}).
\label{eqnatHmTdvsz}
\end{equation}
Clearly, $\aut\pi_A\circ\eta=\eta\circ\aut\pi_X$. This implies easily that the $\eta$-image of a symmetric element of $\fl n$ is  a symmetric element of $\ssf M_n$. But the symmetric elements of $\ssf M_n$ form $\alg 2$ (as a sublattice), and we obtain that $\restrict \eta{\Sl(n)}\colon \Sl(n)\to \tbf 2$ is a surjective homomorphism. So the kernel of $\restrict \eta{\Sl(n)}$ cuts $\Sl(n)$ into a prime ideal, the dark-grey lower part together with the bottom element in Figure \ref{figConstr}, and its complementary  prime filter, the light-grey upper part together with the top in the figure. Besides that their shades are distinct in the Figure, the two parts are separated by a dashed line.

\begin{lemma}
\label{lemmapjincreasing}
For every $i<n$, the sequence $\set{p^{(i)}_j: j\in \nfromz}$ is strictly increasing, that is, $p^{(i)}_0< p^{(i)}_1 < p^{(i)}_2
<\dots$  in $\fl n$.
\end{lemma}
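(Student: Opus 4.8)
\textbf{Proof plan for Lemma~\ref{lemmapjincreasing}.}

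The plan is to prove the strict increase $p^{(i)}_0 < p^{(i)}_1 < p^{(i)}_2 < \dots$ by induction on $j$, establishing both the monotonicity $p^{(i)}_{j-1} \leq p^{(i)}_j$ and the strictness $p^{(i)}_{j-1} \neq p^{(i)}_j$ as I go. The inequality $p^{(i)}_{j-1}\leq p^{(i)}_j$ is essentially built into the defining formula \eqref{eqalignedXSRpPtW}, since $p^{(i)}_j = x_i \vee \bigvee_{i_1<i_2,\ i_1,i_2\neq i}(p^{(i_1)}_{j-1}\wedge p^{(i_2)}_{j-1})$ already contains $x_i = p^{(i)}_0$ as a join-term; but for the inductive step I want more, namely that the whole join on the right-hand side of the $j$-level formula dominates the whole join on the right-hand side of the $(j-1)$-level formula. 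This follows monotonically: if $p^{(i')}_{j-1}\leq p^{(i')}_j$ for every $i'$ (the induction hypothesis applied to each index), then each meet $p^{(i_1)}_{j-1}\wedge p^{(i_2)}_{j-1}$ is $\leq p^{(i_1)}_{j}\wedge p^{(i_2)}_{j}$, hence $p^{(i)}_j \leq p^{(i)}_{j+1}$. So monotonicity propagates cleanly; I just need $n\geq 3$ so that for each $i$ there exist at least two indices $i_1<i_2$ in $[n)\setminus\set i$, making the joins nonempty.

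The real content is strictness, and here the plan is to use that the free generators are doubly prime, in particular join prime (Lemma~\ref{lemmagnJMggDprm}), together with Whitman's condition \wcond{} (Lemma~\ref{lemmaWcnDtLgnB}). The base case $p^{(i)}_0 = x_i < p^{(i)}_1$: suppose for contradiction $p^{(i)}_1 \leq x_i$. Then in particular $p^{(i_1)}_0 \wedge p^{(i_2)}_0 = x_{i_1}\wedge x_{i_2} \leq x_i$ for some fixed pair $i_1<i_2$ with $i_1,i_2\neq i$; by \wcond{} applied to $x_{i_1}\wedge x_{i_2}\leq x_i$, either $x_{i_1}\leq x_i$ or $x_{i_2}\leq x_i$, contradicting that distinct free generators form an antichain (which is immediate from double primeness, or from $\Dirr{\fl n}$ being an antichain). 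For the inductive step, I would show that if $p^{(i)}_{j-1} < p^{(i)}_j$ for all $i$, then $p^{(i)}_j < p^{(i)}_{j+1}$ for all $i$. Assuming equality $p^{(i)}_{j+1} = p^{(i)}_j$, pick an index $i_1 \neq i$ and a second index $i_2$ with $i_1,i_2\neq i$; then $p^{(i_1)}_j \wedge p^{(i_2)}_j \leq p^{(i)}_j$. Now $p^{(i)}_j$ is a join of $x_i$ and meets of the form $p^{(k_1)}_{j-1}\wedge p^{(k_2)}_{j-1}$; apply \wcond{} to $p^{(i_1)}_j \wedge p^{(i_2)}_j \leq p^{(i)}_j$. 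One branch gives $p^{(i_1)}_j \leq p^{(i)}_j$ (or symmetrically $p^{(i_2)}_j\leq p^{(i)}_j$), and the other gives $p^{(i_1)}_j\wedge p^{(i_2)}_j \leq x_i$ or $p^{(i_1)}_j\wedge p^{(i_2)}_j\leq p^{(k_1)}_{j-1}\wedge p^{(k_2)}_{j-1}$ for some term of $p^{(i)}_j$.

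The main obstacle I anticipate is turning these Whitman-derived sub-inequalities into an actual contradiction; this is where one needs a good \emph{invariant}. The natural candidate is: track \emph{which free generator occurs as a "lower bound" via join-primeness}. Concretely, since $x_{i_1}$ is join prime and $x_{i_1} \leq p^{(i_1)}_{j-1}\wedge p^{(i_2)}_{j-1}$ is false in general (that is a meet, not something above $x_{i_1}$), I should instead look upward: $x_{i_1}\leq p^{(i_1)}_j$ always (by the base of the recursion), and I want to argue $x_{i_1}\nleq p^{(i)}_j$ when $i\neq i_1$ — i.e. establish the auxiliary claim that for $i\neq i'$, $x_{i'}\nleq p^{(i)}_j$ for all $j$, again by induction on $j$ using \wcond{} and join-primeness of $x_{i'}$. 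Once that auxiliary claim is in hand, strictness follows: if $p^{(i)}_j = p^{(i)}_{j+1}$ then $p^{(i_1)}_j\wedge p^{(i_2)}_j \leq p^{(i)}_j$, and I would chase \wcond{} to reach either $x_{i_1}\leq p^{(i)}_j$ (contradicting the auxiliary claim) or $p^{(i_1)}_j\leq p^{(i)}_j$, and in the latter case I would need a second invariant — perhaps a "depth/rank" count showing the generator $x_{i_1}$ appears at level $j$ in $p^{(i_1)}_j$ but only at level $<j$ in $p^{(i)}_j$ — to finish. I expect the cleanest packaging is to prove simultaneously, by one induction on $j$, the two statements "$x_{i'}\nleq p^{(i)}_j$ for $i\neq i'$" and "$p^{(i)}_{j-1}<p^{(i)}_j$", since each feeds the other; the Whitman case analysis is then mechanical but needs care to list all join-terms of $p^{(i)}_j$ and apply \wcond{} to each possibility.
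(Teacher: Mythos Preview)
Your plan is headed in the right direction and overlaps substantially with the paper's argument (monotonicity by induction; strictness by contradiction plus a \wcond{}-chase), but the chase is not carried far enough and your diagnosis of where the difficulty lies is off.

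First, a small point: the case you single out as needing a ``second invariant'', namely $p^{(i_1)}_j \leq p^{(i)}_j$ with $i_1\neq i$, is already killed by your own auxiliary claim. Since $x_{i_1}\leq p^{(i_1)}_j$ by construction, you get $x_{i_1}\leq p^{(i)}_j$, contradicting the auxiliary. No new invariant is needed there. (The paper gets the same incomparability more cheaply by evaluating in $\mathsf M_n$: one checks $p^{(i)}_j(a_0,\dots,a_{n-1})=a_i$, so $p^{(i)}_j$ and $p^{(i')}_{j'}$ are incomparable whenever $i\neq i'$.)

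The actual gap is in the remaining \wcond{}-branch. From $p^{(i_1)}_j\wedge p^{(i_2)}_j \leq p^{(i)}_j$ you may land in
\[
p^{(i_1)}_j \wedge p^{(i_2)}_j \;\leq\; p^{(k_1)}_{j-1}\wedge p^{(k_2)}_{j-1}
\;\leq\; p^{(u_1)}_{j-1},
\]
and this does \emph{not} reduce to either of your two terminal outcomes; it requires iteration. The paper runs this as an explicit infinite descent: assuming a minimal $j$ with $p^{(i)}_{j+1}\leq p^{(i)}_j$, one shows that the condition ``$j-m\in\nfromz$ and $p^{(1)}_j\wedge p^{(2)}_j\leq p^{(u_m)}_{j-m}$ for some $u_m$'' propagates from $m$ to $m+1$, which is absurd for $m>j$. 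Two subtleties in that step are invisible in your sketch. At each stage one must re-apply \wcond{} to $p^{(1)}_j\wedge p^{(2)}_j\leq p^{(u_m)}_{j-m}$, and the meetand-branch can now produce $p^{(i_s)}_j\leq p^{(u_m)}_{j-m}$ with $u_m=i_s$ (same superscript); your auxiliary claim says nothing here, and one needs the \emph{minimality} of $j$ (equivalently, the induction hypothesis for all indices simultaneously) to get $p^{(i_s)}_j\leq p^{(i_s)}_{j-1}$ as a contradiction. One must also dispose of the branch $p^{(1)}_j\wedge p^{(2)}_j\leq x_{u_m}$ at every level of the descent, not just once; the paper packages this as the preliminary fact $p^{(r)}_j\wedge p^{(s)}_j\nleq x_k$ for all $j\geq 1$ and all $r,s,k$, proved by its own short induction before the main argument begins.

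In short: replace the vague ``depth/rank'' idea by an honest descent on the level index, and prove the two auxiliary facts (\,$p^{(r)}_j\wedge p^{(s)}_j\nleq x_k$ for $j\geq 1$, and incomparability of $p^{(i)}_j$ with $p^{(i')}_{j'}$ for $i\neq i'$\,) up front so that each \wcond{}-step of the descent has exactly one surviving branch. With those in place, your outline becomes the paper's proof.
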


\begin{proof} 
First, we show by induction on $j$ that
\begin{equation}
\text{$p^{(r)}_j \wedge p^{(s)}_j\nleq x_k$ for all $j\in\nfromo$ and $k,r,s\in[n)$.}
\label{eqtxtxcvxcei}
\end{equation}
Suppose, for a contradiction, that  \eqref{eqtxtxcvxcei} fails, and let $j\in\nfromo$ be the smallest number violating it. Pick $k,r,s\in [n)$ such that $p^{(r)}_j \wedge p^{(s)}_j\leq x_k$. Since $x_k$ is meet prime by Lemma~\ref{lemmagnJMggDprm}, we can assume that $p^{(r)}_j\leq x_k$. 
By \eqref{eqalignedXSRpPtW}, $x_r\leq x_k$, whence $r=k$ and we have that $p^{(k)}_j\leq x_k$. Pick $r'<s'$ in $[n)\setminus\set k$; this is possible since $n\geq 3$. Since  $p^{(k)}_j\leq x_k$,  \eqref{eqalignedXSRpPtW} gives that  $p^{(r')}_{j-1} \wedge p^{(s')}_{j-1}\leq x_k$. Since this inequality does not violate  \eqref{eqtxtxcvxcei} by the choice of $j$, it follows that $j-1\notin\nfromo$. Hence, $j=1$ and \eqref{eqalignedXSRpPtW} turns $p^{(r')}_{j-1} \wedge p^{(s')}_{j-1}\leq x_k$ into $x_{r'}\wedge x_{s'}\leq x_k$, which is a contradiction since $k\notin\set{r',s'}$. This proves \eqref{eqtxtxcvxcei}.

Next, based on \eqref{eqalignedXSRpPtW}, a trivial induction on $j$ shows that  
\begin{equation}
\text{for all $j\in\nfromz$ and $i\in[n)$, $\,\,p^{(i)}_j(a_0,\dots,a_{n-1})=a_i$ holds in $\ssf M_n$.}
\label{eqtxtdhbmzTwMn}
\end{equation}
This implies  that, for any $j,j'\in\nfromz$ and $i,i'\in[n)$,
\begin{equation}
\text{if $i\neq i'$, then the terms $p^{(i)}_j$ and $p^{(i')}_{j'}$ are incomparable in $\fl n$.}
\label{eqtxtwqoiropt} 
\end{equation}
A straightforward induction yields that the sequence is increasing, that is,   
\begin{equation}
\text{
$p^{(i)}_0\leq p^{(i)}_1 \leq p^{(i)}_2 \leq \dots$ holds in $\fl n$.}
\label{eqtxtsrtLmnTnNdzTs}
\end{equation} 

Armed with the preparations above,  it suffices to prove the strict inequalities in the lemma  only for $i=0$, since then the case $i>0$ will follow by symmetry.
For the sake of contradiction,  suppose that there exists a $j\in \nfromz$ such that
\begin{equation}
p_{j+1}^{(0)}\overset{\eqref{eqalignedXSRpPtW}}=x_0\vee \bigvee_{i_1<i_2,\,\,i_1,i_2\in[n)\setminus\set 0}\bigl(p^{(i_1)}_{j} \wedge p^{(i_2)}_{j}\bigr)\leq p^{(0)}_j.
\label{qwekuezh}
\end{equation}
Let $j$ be minimal with this property. By symmetry, the superscript 0 does not play a distinguished role here. That is, until the end of the proof,
\begin{equation}
\text{for each $i\in[n)$, $j\in\nfromz$ is minimal such that  $p_{j+1}^{(i)}\leq p_{j}^{(i)}$.} 
\label{eqtxtjmFrLnJfkLtt}
\end{equation}
We are going to derive a contradiction  from \eqref{qwekuezh} by infinite descent.
Since $p^{(0)}_1=x_0\vee (x_1\wedge x_2)\vee \dots \leq x_0=p^{(0)}_0$ would lead to $x_1\wedge x_2\leq x_0$, which fails even in $\alg 2$, we obtain that $j > 0$. So, as the first step of the descent, we conclude that $j-1\in\nfromz$. 
Thus,  \eqref{qwekuezh} and $j-1\in\nfromz$ imply that
\begin{equation}
p^{(1)}_j \wedge p^{(2)}_j\leq 
p^{(0)}_j \overset{\eqref{eqalignedXSRpPtW}}=
x_0+\bigvee_{i_1<i_2,\,\,i_1,i_2\in[n)\setminus\set 0} \bigl(p^{(i_1)}_{j-1} \wedge p^{(i_2)}_{j-1}\bigr).
\label{eqou9eqw}
\end{equation}
By \eqref{eqtxtwqoiropt}, this inequality would fail if one of the two meetands on the left was omitted. Hence, \wcond{} and \eqref{eqtxtxcvxcei} yield that 
 $p^{(1)}_j \wedge p^{(2)}_j\leq  p^{(u_1)}_{j-1} \wedge p^{(v_1)}_{j-1}$ for some $v_1, u_1\in[n)\setminus\set0$. 
In particular, $p^{(1)}_j \wedge p^{(2)}_j\leq  p^{(u_1)}_{j-1}$. We formulate this inequality together with $j-1\in\nfromz$ also in the following way: the condition 
\begin{equation}
\text{$j- m\in\nfromz$ and there is a $u_m\in[n)$ such that $p^{(1)}_j \wedge p^{(2)}_j\leq  p^{(u_m)}_{j-m}$}
\label{eqNtfwTGtsp}
\end{equation}
holds for $m=1$. 
In order to continue the descent in infinitely many steps, we assert that
\begin{equation}
\parbox{5.4cm}{for every $m\in\nfromo$, if \eqref{eqNtfwTGtsp} holds for $m$, then it holds also for $m+1$.}
\label{eqpbxdhTrjNmTrsF}
\end{equation}
In order to prove \eqref{eqpbxdhTrjNmTrsF}, assume  \eqref{eqNtfwTGtsp}  for $m$. The case $j-m=0$ is ruled out by \eqref{eqtxtxcvxcei}, whence $j-(m+1)\in\nfromz$. Hence, the inequality in  \eqref{eqNtfwTGtsp} gives that 
\begin{equation}
 p^{(1)}_j \wedge p^{(2)}_j\leq  p^{(u_m)}_{j-m}\overset{\eqref{eqalignedXSRpPtW}}=
 x_{u_m}\vee\bigvee_{\substack{i_1<i_2\\i_1,i_2\in[n)\setminus\set {u_m}}} \bigl(p^{(i_1)}_{j-(m+1)} \wedge p^{(i_2)}_{j-(m+1)}\bigr).
\label{eqfsRrgtcWx}
\end{equation}
As before, we are going to apply \wcond{} to \eqref{eqfsRrgtcWx}; however, the argument for the meetands on the left is a bit longer. If $u_m\notin\set{1,2}$, then we obtain from \eqref{eqtxtwqoiropt} that  none of the meetands on the left of  \eqref{eqfsRrgtcWx}  can be omitted from  the inequality.
If $u_m=1$, then $p^{(1)}_j$ still cannot be omitted by \eqref{eqtxtwqoiropt}, but we need the same fact for the other meetand, $p^{(2)}_j$. Observe that if $p^{(2)}_j$ was omitted and $u_m$ equaled $1$, then we would have by \eqref{eqtxtsrtLmnTnNdzTs} that $p^{(1)}_j \leq p^{(1)}_{j-m}  \leq p^{(1)}_{j-1}$, contradicting 
\eqref{eqtxtjmFrLnJfkLtt}. So none of the two meetands in question can be omitted if $u_m=1$, and the same is true for $u_m=2$ since $1$ and $2$ play symmetric roles.
This shows that no matter what is $u_m$,  none of the two meetands on the left of \eqref{eqfsRrgtcWx} can be omitted. 
Therefore, \eqref{eqfsRrgtcWx}, \wcond{} and  \eqref{eqtxtxcvxcei}  imply that 
$p^{(1)}_j \wedge p^{(2)}_j\leq  p^{(u_{m+1})}_{j-(m+1)} \wedge p^{(v_{m+1})}_{j-(m+1)}$ for some $u_{m+1}, v_{m+1}\in[n)\setminus\set{u_m}$. 
In particular, $p^{(1)}_j \wedge  p^{(2)}_j\leq   p^{(u_{m+1})}_{j-(m+1)}$. Thus, we conclude that \eqref{eqNtfwTGtsp} holds for $m+1$, completing the proof of \eqref{eqpbxdhTrjNmTrsF}. Consequently, \eqref{eqNtfwTGtsp} holds for all $m\in\nfromo$, which contradicts the finiteness of $j\in\nfromz$ and completes the proof of Lemma~\ref{lemmapjincreasing}.
\end{proof}

\begin{lemma}
\label{lemmamjisincreasing}
The sequence $\set{m_j: j\in \nfromz}$ is strictly increasing, that is, we have that $m_0< m_1 < m_2 <\dots$  in $\fl n$.
Also, $\set{\ovl{m_j}: j\in \nfromz}$ is strictly decreasing.
\end{lemma}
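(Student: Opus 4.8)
The plan is to avoid a direct Whitman-style computation by first isolating the clean identity
\[
p^{(i)}_{j+1} = x_i \vee m_j \qquad \text{for all } j\in\nfromz \text{ and } i\in[n),
\]
which makes the link between the $p$-sequences and the $m$-sequence transparent. The inequality ``$\leq$'' here is immediate from \eqref{eqalignedXSRpPtW}, since $p^{(i)}_{j+1}$ is by definition the join of $x_i$ with a subfamily of the meetands $p^{(i_1)}_j\wedge p^{(i_2)}_j$ that occur in $m_j$. For ``$\geq$'' I would note three things: $x_i\leq p^{(i)}_{j+1}$ holds by \eqref{eqalignedXSRpPtW}; every meetand $p^{(i_1)}_j\wedge p^{(i_2)}_j$ of $m_j$ with $i\notin\set{i_1,i_2}$ is below $p^{(i)}_{j+1}$, again directly by \eqref{eqalignedXSRpPtW}; and every remaining meetand $p^{(i)}_j\wedge p^{(k)}_j$ of $m_j$ satisfies $p^{(i)}_j\wedge p^{(k)}_j\leq p^{(i)}_j\leq p^{(i)}_{j+1}$ by Lemma~\ref{lemmapjincreasing}. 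Hence $m_j\leq p^{(i)}_{j+1}$, and the identity follows.

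With this identity available, the weak monotonicity $m_0\leq m_1\leq m_2\leq\cdots$ is a triviality: by Lemma~\ref{lemmapjincreasing} every $p^{(i)}_j\leq p^{(i)}_{j+1}$, so each meetand $p^{(i_1)}_j\wedge p^{(i_2)}_j$ is nondecreasing in $j$, and therefore so is their join $m_j$. (Alternatively, from the identity $(x_{i_1}\vee m_j)\wedge(x_{i_2}\vee m_j)\geq m_j$ for every pair, giving $m_{j+1}\geq m_j$ directly.)

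For \emph{strictness} I would argue by contradiction. Suppose $m_j=m_{j+1}$ for some $j\in\nfromz$. Applying the identity twice gives
\[
p^{(0)}_{j+1}=x_0\vee m_j = x_0\vee m_{j+1}=p^{(0)}_{j+2},
\]
which contradicts the \emph{strict} increase $p^{(0)}_{j+1}<p^{(0)}_{j+2}$ furnished by Lemma~\ref{lemmapjincreasing}. Hence $m_j<m_{j+1}$ for all $j\in\nfromz$. The second assertion is then purely formal: the natural dual automorphism $\delta_{\fl n}$ reverses the order, and $\ovl{m_j}=\delta_{\fl n}(m_j)$ by the convention on overlines, so applying $\delta_{\fl n}$ to the chain $m_0<m_1<m_2<\cdots$ yields $\ovl{m_0}>\ovl{m_1}>\ovl{m_2}>\cdots$.

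The only genuinely non-routine step is spotting and verifying the identity $p^{(i)}_{j+1}=x_i\vee m_j$; after that, everything reduces to the already-established Lemma~\ref{lemmapjincreasing}. It is worth emphasizing why that lemma must be proved first: its non-strict half is exactly what drives the ``$\geq$'' direction of the identity, while its strict half is precisely what produces the contradiction that upgrades $m_j\leq m_{j+1}$ to $m_j<m_{j+1}$.
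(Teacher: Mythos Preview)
Your argument is correct, and it is genuinely different from the paper's. The paper does not isolate the identity $p^{(i)}_{j+1}=x_i\vee m_j$; instead, after observing the weak monotonicity exactly as you do, it supposes $m_j\leq m_{j-1}$, extracts the consequence $p^{(1)}_j\wedge p^{(2)}_j\leq m_{j-1}$, and then feeds this into the infinite-descent machinery \eqref{eqNtfwTGtsp}--\eqref{eqpbxdhTrjNmTrsF} that was already set up inside the proof of Lemma~\ref{lemmapjincreasing}, obtaining the same contradiction as there. So the paper's route is ``reuse the Whitman-style descent already proved'', while yours is ``prove a clean algebraic identity that makes the strictness transfer from the $p$-sequence to the $m$-sequence automatic''. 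Your approach is shorter and more transparent, and it has the conceptual payoff of explaining \emph{why} the two sequences move in lockstep; the paper's approach has the virtue of introducing nothing new, since it literally quotes two displayed statements from the preceding proof. Both rely on Lemma~\ref{lemmapjincreasing} in an essential way; yours makes that dependence visibly minimal (only the inequality $p^{(i)}_j\leq p^{(i)}_{j+1}$ for the identity, and only a single instance of strictness for the contradiction).
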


\begin{proof} It suffices to deal only with  $\set{m_j: j\in \nfromz}$. The sequence in question is increasing by its definition, see \eqref{eqalignedXSRpPtW}, and Lemma \ref{lemmapjincreasing}. For the sake of contradiction, suppose that $m_{j}\leq m_{j-1}$ holds for some $j\in\nfromo$. 
Then, since  all joinands of $m_{j}$ are less than or equal to $m_{j-1}$, we have that, in particular, 
\begin{equation}
p^{(1)}_{j} \wedge p^{(2)}_{j}\leq m_j=\bigvee_{i_1<i_2,\,\,i_1,i_2\in[n)}\bigl(p^{(i_1)}_{j-1} \wedge p^{(i_2)}_{j-1}\bigr).
\label{eqbmhTrzBvmJ}
\end{equation}
It follows from \eqref{eqtxtdhbmzTwMn} that, with  $\vec a=(a_0,\dots, a_{n-1})\in \ssf M_n\times\dots\times \ssf M_n$, we have that $m_j(\vec a)=0$ while $p^{(1)}_{j}(\vec a)=a_1$ and $p^{(2)}_{j}(\vec a)=a_2$. Hence, neither of the meetands on the left of \eqref{eqbmhTrzBvmJ} can be omitted without breaking the inequality. Thus, applying \wcond{} to \eqref{eqbmhTrzBvmJ}, it follows that 
$p^{(1)}_{j} \wedge p^{(2)}_{j}\leq p^{(u_1)}_{j-1} \wedge p^{(v_1)}_{j-1}$
for some $u_1,v_1\in[m)$. This yields that 
$p^{(1)}_{j} \wedge p^{(2)}_{j}\leq p^{(u_1)}_{j-1}$. Therefore, \eqref{eqNtfwTGtsp} holds for $m=1$. Consequently, it follows from  \eqref{eqpbxdhTrjNmTrsF} that  \eqref{eqNtfwTGtsp}  holds for all $m\in\nfromo$, which is a contradiction since $j-m\notin \nfromz$ for $m>j$.
\end{proof}

The following lemma states something on $\Sl(n)$, not on $\Fl(n)$.

\begin{lemma}
\label{lemmamjisdoubleprime}
For all $j\in\nfromz$, $m_j$ and $\ovl{m_j}$ are  doubly prime elements of $\Sl(n)$. 
\end{lemma}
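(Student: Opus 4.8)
The plan is to prove that $m_j$ is doubly prime in $\Sl(n)$ and to derive the statement about $\ovl{m_j}$ for free. Since $\Aut(\fl n)$ is normal in $\Daut(\fl n)$, the sublattice $\Sl(n)$ is closed under $\delta=\delta_{\fl n}$, so $\restrict\delta{\Sl(n)}$ is a dual automorphism of $\Sl(n)$; it sends $m_j$ to $\ovl{m_j}$, and a dual automorphism interchanges join prime and meet prime elements, so it is enough to treat $m_j$. Before the two primeness arguments I would record the ingredients: $x_i=p_0^{(i)}\le p_j^{(i)}$ and $m_0\le m_1\le\cdots$ (Lemmas~\ref{lemmapjincreasing} and~\ref{lemmamjisincreasing}); each $x_i$ is meet prime in $\fl n$ (Lemma~\ref{lemmagnJMggDprm}); by Lemma~\ref{lemmabottopNu} and \eqref{eqpbxfhszRtdzBg}, every element of $\Sl(n)$ other than $0_{\Sl(n)}$ and $1_{\Sl(n)}$ lies in $[s,\ovl s]$, hence $1_{\Sl(n)}=\bigvee_{i<n}x_i$ is join irreducible in $\Sl(n)$ and $g\le x_i$ with $g\in\Sl(n)$ forces $g=0_{\Sl(n)}$; and, repeatedly, the ``symmetrization trick'': if $u\in\Sl(n)$ lies below one of the terms $p_j^{(i)}$, $p_j^{(i_1)}\wedge p_j^{(i_2)}$ or $x_i$, then, as every $\aut\sigma$ fixes $u$ and permutes superscripts (Lemma~\ref{lemmakshbzRmBgJwWm}), $u$ lies below the analogous term for every choice of superscripts, and so below the meet of all of them (for instance $u\le p_j^{(i)}$ gives $u\le\bigwedge_{i'<n}p_j^{(i')}$).

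For join primeness I would assume $m_j\le a\vee b$ with $a,b\in\Sl(n)$ and apply \wcond{} to $p_j^{(i_1)}\wedge p_j^{(i_2)}\le a\vee b$ for each pair $i_1<i_2$ (valid since this meet is $\le m_j$). For each pair this yields $p_j^{(i_1)}\wedge p_j^{(i_2)}\le a$, the same with $b$, or $p_j^{(i_\ell)}\le a\vee b$ for some $\ell\in\set{1,2}$. If the first possibility holds for one pair, the symmetrization trick makes it hold for all pairs and then $m_j\le a$; likewise for the second. Otherwise the set $M=\set{i<n:p_j^{(i)}\le a\vee b}$ contains $0$ or $1$; being $\Aut(\fl n)$-invariant, $M=[n)$, so $1_{\Sl(n)}=\bigvee_i x_i\le\bigvee_i p_j^{(i)}\le a\vee b$, and join irreducibility of $1_{\Sl(n)}$ gives $a=1_{\Sl(n)}$ or $b=1_{\Sl(n)}$. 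In every case $m_j\le a$ or $m_j\le b$.

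For meet primeness I would assume $a\wedge b\le m_j$ with $a,b\in\Sl(n)$ and, aiming at a contradiction, $a\nleq m_j$ and $b\nleq m_j$. The core is a downward induction on $m\in\set{0,1,\dots,j}$ showing that $a\wedge b\le p_{j-m}^{(i)}$ for all $i<n$. For $m=0$, \wcond{} applied to $a\wedge b\le m_j=\bigvee_{i_1<i_2}(p_j^{(i_1)}\wedge p_j^{(i_2)})$ excludes $a\le m_j$ and $b\le m_j$, leaving $a\wedge b\le p_j^{(i_1)}\wedge p_j^{(i_2)}$ for some pair, which the symmetrization trick promotes to $a\wedge b\le\bigwedge_{i<n}p_j^{(i)}$. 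For the step (with $m<j$, so $j-m\ge 1$), fix any $i<n$ and apply \wcond{} to $a\wedge b\le p_{j-m}^{(i)}=x_i\vee\bigvee_{\substack{k_1<k_2\\k_1,k_2\ne i}}(p_{j-m-1}^{(k_1)}\wedge p_{j-m-1}^{(k_2)})$: the alternative $a\le p_{j-m}^{(i)}$ gives $a\le\bigwedge_{i'}p_{j-m}^{(i')}\le p_{j-m}^{(0)}\wedge p_{j-m}^{(1)}\le m_{j-m}\le m_j$, contradicting $a\nleq m_j$ (and symmetrically for $b$); the alternative $a\wedge b\le x_i$ gives $a\le x_i$ or $b\le x_i$ since $x_i$ is meet prime, hence $a=0_{\Sl(n)}$ or $b=0_{\Sl(n)}$, again a contradiction; so $a\wedge b\le p_{j-m-1}^{(k_1)}\wedge p_{j-m-1}^{(k_2)}$ for some pair with $k_1,k_2\ne i$ (here $n\ge 3$ is used), and the symmetrization trick upgrades this to $a\wedge b\le p_{j-m-1}^{(k)}$ for all $k<n$. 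Finally, taking $m=j$ gives $a\wedge b\le p_0^{(i)}=x_i$ for all $i$, whence $x_0$ meet prime yields $a\le x_0$ or $b\le x_0$, so $a=0_{\Sl(n)}$ or $b=0_{\Sl(n)}$ --- the desired contradiction. Therefore $a\le m_j$ or $b\le m_j$, i.e., $m_j$ is meet prime.

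The reduction to $m_j$ and the join-primeness step are routine. The part needing real care is the meet-primeness induction: one must check that in the inductive step each Whitman alternative other than the level-lowering one is impossible, and that the symmetrization trick genuinely turns ``for some pair'' into ``for all indices'' at each level, so that the finite descent reaches a contradiction once $m$ equals $j$.
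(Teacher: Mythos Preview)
Your proof is correct and follows essentially the same route as the paper's: the join-primeness argument via \wcond{} on $p_j^{(0)}\wedge p_j^{(1)}\le a\vee b$ plus symmetrization is the paper's argument (you phrase the terminal case via join irreducibility of $1_{\Sl(n)}$ rather than via $x_0$ being join prime and \eqref{eqpbxfhszRtdzBg}, but this is the same content), and your meet-primeness descent is the paper's infinite-descent \eqref{eqtxtntLnlSzkrSgynP}--\eqref{eqpbxmSdjRarGszktLnlrSg}, merely carrying the symmetrized invariant ``$a\wedge b\le p_{j-m}^{(i)}$ for all $i$'' rather than ``for some $u_r$'' (these are equivalent by your symmetrization trick, which is exactly the paper's \eqref{eqsMtcHnPlWq}).
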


\begin{proof}
By duality, it suffices to deal only with $m_j$. 
In order to show that $m_j$ is join prime, assume that
$m_j\leq h_1\vee h_2$ where $h_1,h_2\in\Sl(n)$. Remember that the containment here means that $h_1$ and $h_2$ are fixed points of every automorphism of $\fl n$. We have to show that $m_j\leq h_i$ for some $i\in\set{1,2}$. There are two cases to consider. 
First, assume that 
\begin{equation}
\text{
there exists an $i\in\set{1,2}$ such that $p^{(0)}_j  \wedge  p^{(1)}_j\leq h_i$.}
\label{eqpbxdPSWrmXvQpkjsbtz}
\end{equation}
In this case, for each $(u_1,u_2)\in[n)\times [n)$ with $u_1<u_2$, pick a permutation $\sigma\in\Sym([n))$ such that $\sigma(0)=u_1$ and $\sigma(1)=u_2$. By Remark~\ref{remcspRtlrS} and Lemma~\ref{lemmakshbzRmBgJwWm},
\begin{equation}
\begin{aligned}
p^{(u_1)}_j  \wedge p^{(u_2)}_j&=p^{(\sigma(0))}_j \wedge p^{(\sigma(1))}_j\cr
&=\aut\sigma(p^{(0)}_j \wedge p^{(1)}_j)\leq \aut\sigma(h_i)=h_i.
\end{aligned}
\label{eqsMtcHnPlWq}
\end{equation}
Forming the join of these inequalities for all meaningful pairs $(u_1,u_2)$, we obtain that $m_j\leq h_i$, as required.

Second, assume that \eqref{eqpbxdPSWrmXvQpkjsbtz} fails. Then  \wcond{} applied to the (side terms of the)  inequality $p^{(0)}_j \wedge p^{(1)}_j\leq m_j \leq h_1 \vee h_2$ gives that $p^{(0)}_j \leq  h_1 \vee h_2$ or $p^{(1)}_j \leq h_1 \vee h_2$; we can assume that $p^{(0)}_j \leq  h_1 \vee h_2$. By \eqref{eqalignedXSRpPtW} and Lemma~\ref{lemmapjincreasing}, we have that $x_0\leq h_1 \vee h_2$. Since $x_0$ is join prime by Lemma~\ref{lemmagnJMggDprm}, $x_0\leq h_i$ holds for some $i\in\set{1,2}$. Applying \eqref{eqpbxfhszRtdzBg}, we obtain that $h_i=1_{\Sl(n)}$. Hence, $m_j\leq 1_{\Sl(n)} = h_i$, as required. Now that both the validity and the failure of \eqref{eqpbxdPSWrmXvQpkjsbtz} have been considered, 
we conclude 
that $m_j$ is a join prime element of $\Sl(n)$.

Next, we are going to show that $m_j$ is meet prime in $\Sl(n)$. Suppose the contrary, and pick $h_1,h_2\in\Sl(n)$ such that $h_1 \wedge h_2\leq m_j$ but $h_1\nleq m_j$ and $h_2\nleq m_j$. 
We are going to obtain a contradiction by infinite descent.
In order to do so, it suffices to show that the condition 
\begin{equation}
\text{$j-r\in\nfromz$ and there is a $u_r\in[n)$ such that
$h_1 \wedge h_2\leq p^{(u_r)}_{j-r}$}
\label{eqtxtntLnlSzkrSgynP}
\end{equation}
holds for $r=0$, and to show that
\begin{equation}
\parbox{5.4cm}{for every $r\in\nfromz$, if \eqref{eqtxtntLnlSzkrSgynP} holds for $r$, then it also holds for $r+1$.}
\label{eqpbxmSdjRarGszktLnlrSg}
\end{equation}
Applying \wcond{}  to $h_1 \wedge h_2\leq m_j=\bigvee_{u_0<v_0}(p_j^{(u_0)} \wedge p_j^{(v_0)})$, we obtain $u_0,v_0\in[n)$ such that $h_1 \wedge h_2\leq p_j^{(u_0)} \wedge p_j^{(v_0)} \leq p_j^{(u_0)}=p_{j-0}^{(u_0)}$. Hence,  \eqref{eqtxtntLnlSzkrSgynP} holds for $r=0$. 
Next, in order to show \eqref{eqpbxmSdjRarGszktLnlrSg}, assume that $r\in \nfromz$ satisfies condition  \eqref{eqtxtntLnlSzkrSgynP}. We cannot have that $h_1 \wedge h_2\leq x_{u_r}$, because otherwise $h_i\leq x_{u_r}$ for some $i\in\set{1,2}$ by the meet primeness of $x_{u_r}$, see Lemma~\ref{lemmagnJMggDprm}, and so  \eqref{lemmaWcnDtLgnB} would give that $h_i = 0_{\Sl(n)}\leq m_j$, contradicting our assumption. Combining $h_1 \wedge h_2\nleq x_{u_r}$ with \eqref{eqtxtntLnlSzkrSgynP} and \eqref{eqalignedXSRpPtW}, we obtain that $j-r\neq 0$. Hence, $j-(r+1)\in\nfromz$ and  
\begin{equation}
h_1 \wedge h_2 \leq p_{j-r}^{(u_r)} = x_{u_r}\vee\bigvee_{u_{r+1}<v_{r+1}} \bigl( p_{j-(r+1)}^{(u_{r+1})} \wedge  p_{j-(r+1)}^{(v_{r+1})}\bigr).
\label{eqhbLnMhgThtwppl}
\end{equation}
In order to exclude that $h_1\leq p_{j-r}^{(u_r)}$, suppose the contrary. Then, by Lemma~\ref{lemmapjincreasing},  $h_1\leq p_{j-r}^{(u_r)}\leq p_{j}^{(u_r)}$. Using a permutation of $[n)$ with $u_r\mapsto  v_r\in [n)\setminus\set{u_r}$ as in \eqref{eqsMtcHnPlWq},  we  obtain that $h_1 \leq p_{j}^{(v_r)}$. Hence, $h_1\leq p_{j}^{(u_r)}  \wedge p_{j}^{(v_r)}\leq m_j$  is a contradiction, proving that $h_1\nleq p_{j-r}^{(u_r)}$. Similarly, $h_2\nleq p_{j-r}^{(u_r)}$. 
Now that we have seen that $h_1 \wedge h_2 \nleq  x_{u_r}$, $h_1\nleq p_{j-r}^{(u_r)}$, and  $h_2\nleq p_{j-r}^{(u_r)}$, we are in the position to apply \wcond{}  to 
\eqref{eqhbLnMhgThtwppl}. So we obtain  that  $h_1 \wedge h_2 \leq  p_{j-(r+1)}^{(u_{r+1})} \wedge  p_{j-(r+1)}^{(v_{r+1})}$ for some $u_{r+1}, v_{r+1}\in [n)$. 
Hence,  $h_1 \wedge h_2 \leq  p_{j-(r+1)}^{(u_{r+1})}$ and \eqref{eqtxtntLnlSzkrSgynP} holds for $r+1$. We have verified \eqref{eqpbxmSdjRarGszktLnlrSg}. Thus, it follows that \eqref{eqtxtntLnlSzkrSgynP} holds for all $r\in\nfromz$. This
is the required contradiction proving that $m_j$ is meet prime in $\Sl(n)$, completing  the proof of Lemma~\ref{lemmamjisdoubleprime}.
\end{proof}

For $3\leq n\in\nfromo$,  in connection with \eqref{eqalignedXSRpPtW} and Lemma~\ref{lemmabottopNu}, let
\begin{equation*} 
P_n:=\begin{cases}
\set{m_j\in \Sl(n):j\in\nfromo}\cup \set{\ovl{m_j}\in \Sl(n):j\in\nfromo},&\text{ if }n=3\cr
\set{m_j\in \Sl(n):j\in\nfromz}\cup \set{\ovl{m_j}\in \Sl(n):j\in\nfromz},&\text{ if }n>3.
\end{cases}
\end{equation*} 
With the ordering of $\Sl(n)$ restricted to $P_n$, $\alg P_n=(P_n;\leq)$ is an ordered set, which is described by the following lemma; see also Figure~\ref{figConstr} for $3<n\in\nfromo$. This lemma explains why the case $n=3$ differs slightly from the case $n>3$.

\begin{lemma} 
\label{lemmam1notsubdual} The following four assertions hold.
\begin{enumeratei}
\item\label{lemmam1notsubduala} If $n>3$, then $s<m_0 \nleq \ovl{m_0}$.
\item\label{lemmam1notsubdualb} If $n=3$, then $m_1\not\leq\ovl{m_1}$. However, $m_0=s<\ovl s=\ovl{m_0}$.
\item\label{lemmam1notsubdualc} For  $n\geq 3$ and $i,j\in\nfromo$, $m_i\nleq \ovl{m_j}$.  
\item\label{lemmam1notsubduald} For  $n\geq 3$ and $i,j \in\nfromz$, $\ovl{m_j}\nleq m_i$.  
\end{enumeratei}
\end{lemma}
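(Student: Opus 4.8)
The plan is to prove the four assertions using only basic free-lattice tools already available: Whitman's condition \wcond{} (Lemma~\ref{lemmaWcnDtLgnB}), the double primeness of the free generators (Lemma~\ref{lemmagnJMggDprm}) together with the fact, immediate from Lemma~\ref{lemmafjnFrGSSt}, that no free generator of $\fl n$ lies below another, the strict monotonicity of the sequences $\set{m_j:j\in\nfromz}$ and $\set{\ovl{m_j}:j\in\nfromz}$ (Lemma~\ref{lemmamjisincreasing}), and the homomorphism $\eta\colon\fl n\to\ssf M_n$ of \eqref{eqnatHmTdvsz} with $\eta(p^{(i)}_j)=a_i$ from \eqref{eqtxtdhbmzTwMn}. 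I would handle part (iv) first: by \eqref{eqtxtdhbmzTwMn}, $\eta(m_i)=\bigvee_{i_1<i_2}(a_{i_1}\wedge a_{i_2})=0_{\ssf M_n}$, and dually $\eta(\ovl{m_j})=\bigwedge_{i_1<i_2}(a_{i_1}\vee a_{i_2})=1_{\ssf M_n}$ (this dual computation is legitimate because $\ssf M_n$ carries a dual automorphism fixing each $a_k$). As $\eta$ is order preserving, $\ovl{m_j}\leq m_i$ would yield $1_{\ssf M_n}=\eta(\ovl{m_j})\leq\eta(m_i)=0_{\ssf M_n}$, a contradiction; this disposes of part (iv) for all $i,j\in\nfromz$ and all $n\geq 3$ at once.

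Parts (i) and (ii), except the last clause of (ii), follow a single routine. The inclusion $s\leq m_0$ (for $n>3$) is immediate: each meetand $\bigwedge_{i'\neq i}x_{i'}$ of $s$ involves $n-1\geq 3$ generators, hence lies below some joinand $x_{i_1}\wedge x_{i_2}$ of $m_0$. For the negative statements --- $m_0\nleq s$ (so $s<m_0$ when $n>3$), $m_0\nleq\ovl{m_0}$ (when $n>3$), and, when $n=3$, $\ovl{m_0}\nleq m_0$ (which, combined with the syntactic identity $m_0=(x_0\wedge x_1)\vee(x_0\wedge x_2)\vee(x_1\wedge x_2)=s$ read off from \eqref{eqalignedXSRpPtW} and the evident $m_0\leq\ovl{m_0}$, gives $m_0=s<\ovl s=\ovl{m_0}$) --- I would assume the inequality holds, apply \wcond{} (directly, or after first passing to a single joinand of the left side and/or a single meetand of the right side), and refute every one of the finitely many resulting alternatives by the double primeness and mutual incomparability of the free generators. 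It is exactly here that $n>3$ is needed in part (i): one must exhibit a fourth index distinct from three prescribed ones.

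The genuinely nontrivial point, which I expect to be the main obstacle, is the remaining clause of part (ii): $m_1\nleq\ovl{m_1}$ for $n=3$. Unlike the other inequalities, it cannot be detected in any convenient finite quotient --- in the free distributive lattice on three generators both $m_1$ and $\ovl{m_1}$ collapse to $m_0$ --- so it genuinely uses freeness. Here $p^{(0)}_1=x_0\vee(x_1\wedge x_2)$, $p^{(1)}_1=x_1\vee(x_0\wedge x_2)$, and $p^{(2)}_1=x_2\vee(x_0\wedge x_1)$. If $m_1\leq\ovl{m_1}$, then the joinand $p^{(0)}_1\wedge p^{(1)}_1$ of $m_1$ is below the meetand $\ovl{p^{(0)}_1}\vee\ovl{p^{(2)}_1}=\bigl(x_0\wedge(x_1\vee x_2)\bigr)\vee\bigl(x_2\wedge(x_0\vee x_1)\bigr)$ of $\ovl{m_1}$; applying \wcond{} to this inequality, I would rule out all four alternatives. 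If $p^{(0)}_1$ or $p^{(1)}_1$ were below the right-hand side, then, using $x_0\leq p^{(0)}_1$ (resp.\ $x_1\leq p^{(1)}_1$) and join primeness, some free generator would lie below another. And $p^{(0)}_1\wedge p^{(1)}_1\leq x_0\wedge(x_1\vee x_2)$ or $p^{(0)}_1\wedge p^{(1)}_1\leq x_2\wedge(x_0\vee x_1)$ forces $p^{(0)}_1\wedge p^{(1)}_1\leq x_0$ or $p^{(0)}_1\wedge p^{(1)}_1\leq x_2$, whence by meet primeness $p^{(0)}_1$ or $p^{(1)}_1$ lies below a free generator, which is again impossible. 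This contradiction proves $m_1\nleq\ovl{m_1}$. Part (iii) then follows immediately: if $m_i\leq\ovl{m_j}$ with $i,j\in\nfromo$, Lemma~\ref{lemmamjisincreasing} gives $m_1\leq m_i\leq\ovl{m_j}\leq\ovl{m_1}$, contradicting $m_1\nleq\ovl{m_1}$ when $n=3$ and, when $n>3$, also giving $m_0\leq m_1\leq\ovl{m_1}\leq\ovl{m_0}$, contradicting part (i).
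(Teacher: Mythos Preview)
Your argument is correct and matches the paper's closely for parts~(iii) and~(iv). Your Whitman analysis for $m_1\nleq\ovl{m_1}$ in~(ii) is essentially the paper's as well: you choose the meetand $\ovl{p^{(0)}_1}\vee\ovl{p^{(2)}_1}$ where the paper chooses $\ovl{p^{(0)}_1}\vee\ovl{p^{(1)}_1}$ and invokes a symmetry/duality shortcut to halve the case count, but the substance is identical. The one genuine difference is part~(i): instead of a \wcond{} case analysis, the paper simply evaluates at $\vec w=(1,1,0,\dots,0)$ in the two-element lattice $\mathbf 2$, obtaining $s(\vec w)=0$, $m_0(\vec w)=1$, and $\ovl{m_0}(\vec w)=0$ in one line (with $s\leq m_0$ supplied by Lemma~\ref{lemmabottopNu}). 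Likewise, for the second clause of~(ii) the paper uses Lemma~\ref{lemmabottopNu} together with $\ssf M_3$, whereas you argue directly; note, incidentally, that your own part~(iv) already delivers $\ovl{m_0}\nleq m_0$, so the separate \wcond{} check you sketch for that inequality is redundant.
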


\begin{proof} 
If $n>3$, then letting $\vec w:=(1,1,0,\dots,0)$, we have that $s(\vec w)=0$,  $m_0(\vec w)=1$,  and $\ovl{m_0}(\vec w)=0$ hold in the two-element lattice $\mathbf{2}$.
This proves \eqref{lemmam1notsubduala}, because $s\leq m_0$ follows from Lemma~\ref{lemmabottopNu}.  

Next, to deal with \eqref{lemmam1notsubdualb}, we assume that $n=3$. Clearly, $m_0=s$ and $\ovl{m_0}=\ovl s$. Lemma~\ref{lemmabottopNu} gives that $s\leq\ovl s$ while $\sf M_3$ witnesses that $s\neq\ovl s$. We have seen that $m_0=s<\ovl s=\ovl{m_0}$. 
For the sake of contradiction, suppose that $m_1\leq \ovl{m_1}$. Then each of the joinands of $m_1$ is less than or equal to every meetand of $\ovl{m_1}$. In particular, we have that $p_1^{(0)} \wedge p_1^{(1)}$ is less than or equal to its dual, that is, 
\begin{equation}
\begin{aligned}
&\bigl(x_0\vee (x_1 \wedge x_2)\bigr) \wedge  
\underline{\bigl(x_1\vee (x_0 \wedge x_2)\bigr)} 
\cr
&\leq \bigl(x_0 \wedge (x_1\vee x_2)\bigr) \vee \bigl( x_1 \wedge (x_0\vee x_2)\bigr).
\end{aligned}
\label{eqbrCdpRtkpcs}
\end{equation}
By \wcond{}, duality, and since $x_0$ and $x_1$ play symmetric roles, we can assume that \eqref{eqbrCdpRtkpcs} holds after omitting its underlined meetand. 
Hence, $x_0$ is less than or equal to the right hand side of 
 \eqref{eqbrCdpRtkpcs}. Using that $x_0$ is join prime by Lemma~\ref{lemmagnJMggDprm}, we obtain that either 
$x_0\leq x_0 \wedge (x_1\vee x_2)\leq x_1\vee x_2$, or $x_0\leq x_1 \wedge (x_0\vee x_2)\leq x_1$, so we have obtained a contradiction, proving \eqref{lemmam1notsubdualb}.

Next, we turn our attention to \eqref{lemmam1notsubdualc}. Suppose, for a contradiction, that $m_i\leq \ovl{m_j}$ for some $i,j\in\nfromo$. We obtain from  Lemma~\ref{lemmamjisincreasing}  that 
\[m_0<m_1\leq m_i\leq \ovl{m_j}\leq \ovl{m_1}<\ovl{m_0}.
\] 
In particular, $m_0<\ovl{m_0}$ and $m_1\leq\ovl{m_1}$. The first
of these two inequalities contradicts part \eqref{lemmam1notsubduala} if $n>3$, while the second one contradicts part \eqref{lemmam1notsubdualb} if $n=3$. So we obtain a contradiction for all $n\geq 3$, whereby   \eqref{lemmam1notsubdualc} holds.

Finally, we obtain \eqref{lemmam1notsubduald} basically from $\eta(\ovl{m_j})=1$ and $\eta(m_i)=0$, see \eqref{eqnatHmTdvsz}, as follows.  Using \eqref{eqtxtdhbmzTwMn}, its dual,  and   \eqref{eqalignedXSRpPtW} defining our terms, we obtain that $\ovl{m_j}(a_0,\dots, a_{n-1})=1_{\ssf M_n}$ and 
 $m_i(a_0,\dots, a_{n-1})=0_{\ssf M_n}$.  This implies  \eqref{lemmam1notsubduald} and completes the proof of Lemma~\ref{lemmam1notsubdual}.
\end{proof}

Now, to indicate that we are progressing in the desired direction, 
we are going to formulate a corollary. Note, however, that neither this corollary, nor its proof, nor the concept defined in the present paragraph will be used in this paper, so the reader can skip over them.
Following Dean~\cite{deanCF} and Dilworth~\cite{dilworthLUB},  an ordered set  $P=(P;\leq_P)$ \emph{completely freely generates} a lattice $K$ if $P$ is a subset of $K$, $\leq_P$ is the restriction of the lattice order $\leq_K$ to $P$, and for every lattice $L$ and every order-preserving map $\phi\colon (P;\leq_P)\to L$, there exists a lattice homomorphism $K\to L$ that extends $\phi$. If so, then we denote $K$ by $\cfree(P;\leq)$. 
 The ordered set $(P_n;\leq)$ was defined right before (and in) Lemma~\ref{lemmam1notsubdual}; 
let $[P_n]_{\fl n}=[P_n]_{\Sl(n)}$ denote the sublattice generated by it.

\begin{corollary}\label{corolddTPsT}\
\begin{enumeratei}
\item\label{corolddTPsTa}
 As an ordered set, $(P_n;\leq)$ is described by Lemmas~\ref{lemmamjisincreasing} and \ref{lemmam1notsubdual}; note that  for $3<n\in\nfromo$, $(P_n;\leq)$ is given also by Figure~\ref{figConstr}. 
\item\label{corolddTPsTb} The sublattice $[P_n]_{\fl n}=[P_n]_{\Sl(n)}$ is completely freely generated by $(P_n;\leq)$. 
\item\label{corolddTPsTc}
Furthermore, $\Sl(n)$ has a sublattice isomorphic to $\fl\omega$.
\end{enumeratei}
\end{corollary}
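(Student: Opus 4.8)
The plan is to treat the three parts in order, drawing only on the lemmas already proved in this section together with Dean's description of completely free lattices and the Rival--Wille theorem cited in the introduction. Part~\eqref{corolddTPsTa} carries no content beyond collecting facts: Lemma~\ref{lemmamjisincreasing} says that the $m_j$ form a chain of order type $\omega$ and the $\ovl{m_j}$ a chain of order type $\omega^\ast$, and Lemma~\ref{lemmam1notsubdual}---its parts~\eqref{lemmam1notsubdualc} and~\eqref{lemmam1notsubduald}, supplemented by part~\eqref{lemmam1notsubduala} when $n>3$ (which, via $\ovl{m_j}\leq\ovl{m_0}$, rules out $m_0$ lying below some $\ovl{m_j}$)---says that no element of one of these chains is comparable with any element of the other. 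Hence $(P_n;\leq)$ is the cardinal sum of two countably infinite chains, which for $3<n$ is exactly the ordered set drawn in Figure~\ref{figConstr}; I would simply record this.

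For part~\eqref{corolddTPsTb}, note first that $[P_n]_{\fl n}=[P_n]_{\Sl(n)}$ is immediate, since $\Sl(n)$ is a sublattice of $\fl n$ that contains $P_n$. The substantive input is Lemma~\ref{lemmamjisdoubleprime}: \emph{every} element of $P_n$ is doubly prime \emph{in} $\Sl(n)$. I would then invoke the criterion of Dean~\cite{deanCF} (see also Dilworth~\cite{dilworthLUB}), which is the ordered-set analogue of Lemma~\ref{lemmafjnFrGSSt}: a subset $Q$ of a lattice $L$, endowed with the induced order, completely freely generates $[Q]_L$ if and only if, for every $h\in Q$ and every finite $Z\subseteq Q$, the inequality $h\leq\bigvee_{z\in Z}z$ forces $h\leq z$ for some $z\in Z$, and dually for meets. (For antichains this is precisely Lemma~\ref{lemmafjnFrGSSt}.) Applying it with $Q=P_n$ and $L=\Sl(n)$, the first implication is immediate from the join primeness of $h$ in $\Sl(n)$ and the second from its meet primeness, both guaranteed by Lemma~\ref{lemmamjisdoubleprime}; hence $(P_n;\leq)$ completely freely generates $[P_n]_{\Sl(n)}$.

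Part~\eqref{corolddTPsTc} I would deduce from part~\eqref{corolddTPsTb}. By part~\eqref{corolddTPsTa}, $(P_n;\leq)$ contains, as a subset with the induced order, the cardinal sum of two four-element chains. Inside $\cfree(P_n;\leq)$ the set $P_n$ satisfies the criterion above, hence so does every subset of it carrying the induced order; consequently $\cfree(P_n;\leq)$ has a sublattice isomorphic to the completely free lattice on the cardinal sum of two four-element chains. By the main result of Rival and Wille~\cite{rivalwille}, that lattice has a sublattice isomorphic to $\fl\omega$. Composing these embeddings with the isomorphism from part~\eqref{corolddTPsTb} exhibits a copy of $\fl\omega$ inside the sublattice $[P_n]_{\Sl(n)}$ of $\Sl(n)$.

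The only point that really needs care is citing the complete-free-generation criterion in exactly the right form; it is contained in Dean's treatment of $\cfree(P;\leq)$ and, for antichains, coincides with Lemma~\ref{lemmafjnFrGSSt}, but extracting the precise ``a subset generates a completely free lattice'' statement takes a little attention. Granted that, the corollary is easy: part~\eqref{corolddTPsTa} is read straight off Lemmas~\ref{lemmamjisincreasing} and~\ref{lemmam1notsubdual}, part~\eqref{corolddTPsTb} is a one-line consequence of double primeness in $\Sl(n)$, and part~\eqref{corolddTPsTc} is a transfer through the Rival--Wille theorem.
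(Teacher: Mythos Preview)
Your approach matches the paper's in spirit: both parts rest on Lemma~\ref{lemmamjisdoubleprime} (double primeness of the $m_j,\ovl{m_j}$ in $\Sl(n)$), Dean's description of $\cfree(P;\leq)$, and Rival--Wille. The paper argues part~\eqref{corolddTPsTb} by observing that Dean's recursive algorithm for $\cfree(P_n;\leq)$ uses only \wcond{}, double primeness, and the order of $P_n$, and that the same three ingredients are available in $\fl n$; you package the same idea as a ``criterion'' analogous to Lemma~\ref{lemmafjnFrGSSt}.

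There is, however, one point to tighten. The criterion you state---$h\leq\bigvee Z$ forces $h\leq z$ for some $z\in Z$, and dually---does \emph{not} by itself force $[Q]_L\cong\cfree(Q;\leq)$ in an arbitrary lattice $L$; one also needs $L$ (or at least $[Q]_L$) to satisfy \wcond{}, since otherwise the inductive comparison with Dean's algorithm breaks at the meet--join step. You apply the criterion with $L=\Sl(n)$, but we have no reason to believe $\Sl(n)$ satisfies \wcond{}. The fix is exactly what the paper does: work in $\fl n$, which satisfies \wcond{} by Lemma~\ref{lemmaWcnDtLgnB}, and use that double primeness in $\Sl(n)$ suffices because every term you compare lies in $[P_n]\subseteq\Sl(n)$. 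You flag this as ``the only point that really needs care,'' and that is accurate; just be explicit that the ambient \wcond{} comes from $\fl n$, not $\Sl(n)$.

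For part~\eqref{corolddTPsTc} the paper's chain is $\fl\omega\hookrightarrow\fl 3\hookrightarrow\cfree(P_n;\leq)\hookrightarrow\Sl(n)$, citing Whitman for the first arrow and Rival--Wille for the second. Your route through $\cfree(\text{two }4\text{-chains})$ is fine, but Rival--Wille gives $\fl 3$ rather than $\fl\omega$ directly, so you should still invoke Whitman for the last step.
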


Part~\eqref{corolddTPsTc} is a consequence of Theorem~\ref{thmmain}\eqref{thmmainb}; the point is that we can easily conclude Corollary~\ref{corolddTPsT}\eqref{corolddTPsTc} from known results and the previous lemmas.

\begin{proof}[Proof of  Corollary~\ref{corolddTPsT}]
As opposed to other proofs in the paper, the present argument relies on some outer references that are not quoted with full details. Part~\eqref{corolddTPsTa} is clear. For the validity of Part~\eqref{corolddTPsTb}, we need to show that for arbitrary $(k+k)$-ary  lattice terms $t_1$ and $t_2$, the inequality 
\begin{equation}
t_1(m_i:i<k,\, \ovl{m_i}: i<k)\leq t_2(m_i:i<k,\, \ovl{m_i}: i<k)
\label{eqdNszBmfTsztLntk}
\end{equation}
holds in $\fl n$ iff it holds in the completely free lattice $\cfree(P_n;\leq)$. 
The satisfaction of \eqref{eqdNszBmfTsztLntk} in $\cfree(P_n;\leq)$ can be tested by Dean's algorithm, which is a generalization of Whitman's algorithm; see Dean~\cite{deanCF} or see  Freese, Je\v zek, and Nation~\cite[Theorem 5.19]{fjnmonograph}. This is a recursive algorithm that uses only the following three properties of $\cfree(P_n; \leq)$ and $(P_n; \leq)$: 
\begin{enumerate}[(\upshape{D}1)\quad]
{\setlength\itemindent{\aindent}\item\label{enumDalga} $\cfree(P_n ; \leq)$ satisfies \wcond{},} 
{\setlength\itemindent{\aindent}\item\label{enumDalgb} the elements of $P_n$ are doubly prime, and}
{\setlength\itemindent{\aindent}\item\label{enumDalgc} the description of the ordering of $P_n$.} 
\end{enumerate}
It follows from Lemmas~\ref{lemmaWcnDtLgnB},  \ref{lemmamjisincreasing},  \ref{lemmamjisdoubleprime}, and \ref{lemmam1notsubdual} that these properties hold for $P_n$ as a subset of $\fl n$. Therefore, Dean's algorithm gives the same result in $\cfree(P_n;\leq)$ as (D\ref{enumDalga})--(D\ref{enumDalgc}) give
in $\fl n$. Hence, we conclude that  $[P_n]_{\fl n}$ is completely freely generated by $(P_n;\leq)$, proving Part~\eqref{corolddTPsTb}.
Thus, $\cfree(P_n;\leq)$ can be embedded into $\Sl(n)$. Using that $\fl\omega$ can be embedded into $\fl 3$ by Whitman~\cite{whitman} and $\fl 3$ can be embedded into $\cfree(P_n;\leq)$ by the main result of Rival and Wille~\cite{rivalwille}, we conclude by transitivity that $\Sl(n)$ has a sublattice isomorphic to $\fl\omega$. This proves Part~\eqref{corolddTPsTc}.
\end{proof}

Now, we are ready to prove our Key Lemma.

\begin{proof}[Proof of Lemma \ref{lemmakeylemma}]
It is clear by \eqref{eqalignedXSRpPtW} that
\begin{equation}
\set{m_j:j\in\nfromz}\cup\set{\ovl{m_j}:j\in\nfromz}\cup\set{a,b,\ovl a,\ovl b} \subseteq \Sl(n).
\label{eqsznbhrTsstQ}
\end{equation}
In order to apply Lemma~\ref{lemmafjnFrGSSt} and complete the proof in this way, it suffices to show that none of the  inequalities 
\begin{enumerate}[({\upshape ineq}1)\quad]
{\setlength\itemindent{\bindent}\item\label{afe1} $x_0\leq a \vee \ovl{a} \vee b \vee \ovl{b}$,}
{\setlength\itemindent{\bindent}\item\label{afe2} $a\leq x_0 \vee \ovl{a} \vee b \vee \ovl{b}$,}
{\setlength\itemindent{\bindent}\item\label{afe3} $b\leq x_0 \vee a \vee \ovl{a} \vee \ovl{b}$,}
{\setlength\itemindent{\bindent}\item\label{afe4} $\ovl{a}\leq x_0 \vee a \vee b \vee \ovl{b}$, and}
{\setlength\itemindent{\bindent}\item\label{afe5} $\ovl{b}\leq x_0 \vee a \vee \ovl{a} \vee b$}
\end{enumerate}
holds in $\fl n$, because then the same will be true for their duals. For example, if  
$a\geq x_0  \wedge  \ovl{a} \wedge  b \wedge   \ovl{b}$ held, then we could apply  $\delta=\delta_{\fl n}$ from \eqref{eqpbxnatduAl} to this inequality to obtain  that \iref{afe4} holds.

First, we consider  \iref{afe1}. Suppose, for a contradiction, that it holds. Using \eqref{eqsznbhrTsstQ}, the elements we are going to deal are in $\Sl(n)$. For $i\in\nfromo$, we have that $m_i<m_{i+1}$ by Lemma~\ref{lemmamjisincreasing}, whereby Lemma~\ref{lemmabottopNu} gives that $m_i\leq \ovl s$. Since $m_0 < m_i$ by Lemma~\ref{lemmamjisincreasing}, Lemma~\ref{lemmabottopNu} gives that $s\leq m_i$, whence $\ovl{m_i}\leq \ovl s$. Since $m_i\leq \ovl s$ and $\ovl{m_i}\leq \ovl s$ for all $i\in\nfromo$,  \eqref{eqalignedXSRpPtW} gives that $a \vee \ovl{a} \vee b \vee \ovl{b}\leq \ovl s$. This is a contradiction, because \eqref{eqpbxfhszRtdzBg}
and \iref{afe1} imply that $a \vee \ovl{a} \vee b \vee \ovl{b} = 1_{\Sl(n)} > \ovl s$.

Second, for the sake of contradiction, suppose that  \iref{afe2} holds. Since $\ovl{m_3}\leq a$, we obtain that 
\begin{equation}
\ovl{m_3}\leq x_0 \vee (\ovl{m_1} \wedge m_3) \vee m_2 \vee \ovl{m_4} \vee (\ovl{m_2} \wedge m_4). 
\label{eqdhRtRspDkTnvR}
\end{equation}
By Lemma~\ref{lemmamjisincreasing}, $\ovl{m_3}\nleq \ovl{m_4}$. None of the inequalities 
$\ovl{m_3}\leq \ovl{m_1} \wedge m_3\leq m_3$, $\ovl{m_3}\leq m_2$, and $\ovl{m_3}\leq \ovl{m_2} \wedge m_4\leq m_4$ holds by Lemma~\ref{lemmam1notsubdual}\eqref{lemmam1notsubduald}. 
Thus, since $\ovl{m_3}$ is join prime by Lemma~\ref{lemmamjisdoubleprime}, it follows from \eqref{eqdhRtRspDkTnvR} that $\ovl{m_3}\leq x_0$. Hence, 
 \eqref{eqpbxfhszRtdzBg} yields that $\ovl{m_3}=0_{\Sl(n)}$. So 
 $0_{\Sl(n)} = \ovl{m_3}>\ovl{m_4} \in \Sl(n)$ by Lemma~\ref{lemmamjisincreasing}, and this is a contradiction.
Thus, \iref{afe2} fails, as required.

Third, for the sake of  contradiction, we suppose that  \iref{afe3} holds. Since $m_2\leq b$, we obtain that 
\begin{equation}
m_2\leq  x_0 \vee   m_1 \vee \ovl{m_3}   \vee   (\ovl{m_1} \wedge m_3)   \vee  (\ovl{m_2} \wedge m_4). 
\label{eqdpRtRssvwhzW}
\end{equation}
By Lemma~\ref{lemmamjisincreasing},  $m_2\nleq m_1$. None of the inequalities 
$m_2\leq \ovl{m_1} \wedge m_3 \leq \ovl{m_1}$, $m_2\leq \ovl{m_3}$, 
and $m_2\leq \ovl{m_2}\wedge m_4\leq \ovl{m_2}$ holds by Lemma~\ref{lemmam1notsubdual}\eqref{lemmam1notsubdualc}. 
Therefore, since $m_2$ is join prime by Lemma~\ref{lemmamjisdoubleprime}, \eqref{eqdpRtRssvwhzW} gives that $m_2\leq x_0$. Hence, \eqref{eqpbxfhszRtdzBg} and Lemma~\ref{lemmamjisincreasing} yield that 
$m_1<m_2=0_{\Sl(n)}$, contradicting $m_1\in\Sl(n)$. 
Therefore, \iref{afe3} fails, as required.

Clearly, there is a lot of similarity between the treatment for \iref{afe2} and that for \iref{afe3}. Namely, both arguments rely on  \eqref{eqpbxfhszRtdzBg}, Lemmas~\ref{lemmamjisincreasing}, \ref{lemmamjisdoubleprime}, and \ref{lemmam1notsubdual}, and 
some comparabilities among the subscripts. In an analogous way, the argument for \iref{afe4} and that for \iref{afe5} are also very similar; this justifies that  only the first of them will be detailed.

For the sake of  contradiction, suppose that \iref{afe4}  is satisfied, that is,  
\begin{equation}
\ovl{m_1} \wedge m_3 \leq x_0 \vee m_1 \vee \ovl{m_3} \vee  m_2 \vee \ovl{m_4} \vee \underline{(\ovl{m_2} \wedge m_4)}.
\label{eqdzbnTpQrgNmGlVmh}
\end{equation}
We are going to parse this inequality by \wcond{}, taking into account that,  according to  Lemmas \ref{lemmagnJMggDprm} and \ref{lemmamjisdoubleprime}, both meetands on the left and the five non-underlined joinands on the right of   \eqref{eqdzbnTpQrgNmGlVmh} are doubly prime elements. Therefore, either one of the two meetands is less than or equal to one of the five non-underlined joinands, or $\ovl{m_1} \wedge m_3 \leq \ovl{m_2} \wedge m_4$; so we need to consider only these possibilities.  If we had that $\ovl{m_1}\leq x_0$ or $m_3\leq x_0$, then  \eqref{eqpbxfhszRtdzBg} and Lemma~\ref{lemmamjisincreasing} would lead to $\ovl{m_2}<\ovl{m_1}=0_{\Sl(n)}$ or $m_2<m_3=0_{\Sl(n)}$, which are contradictions.
If one of the  two meetands was less than or equal to another 
non-underlined joinand, then Lemma~\ref{lemmamjisincreasing} or Lemma~\ref{lemmam1notsubdual} would prompt give a contradiction. We are left with the case $\ovl{m_1} \wedge m_3 \leq \ovl{m_2} \wedge m_4$, but then  $\ovl{m_1} \wedge m_3 \leq \ovl{m_2}$, so the meet primeness of $\ovl{m_2}$ gives that $\ovl{m_1} \leq \ovl{m_2}$, contradicting Lemma~ \ref{lemmamjisincreasing}, or $m_3 \leq \ovl{m_2}$, contradicting Lemma~\ref{lemmam1notsubdual}\eqref{lemmam1notsubdualc}. Therefore, \iref{afe4} fails, as required. Finally, as we have already mentioned,  \iref{afe5} fails by an analogous argument.
This completes the proof of Lemma~\ref{lemmakeylemma}
\end{proof}

\section{From the Key Lemma to a stronger statement}\label{sectionfromKeyL}
If a subset $X$ of a lattice freely generates, then so do the subsets of $X$. Thus, (the Key) Lemma~\ref{lemmakeylemma} in itself implies  that, for every $3\leq \lambda\in\nfromo$, there is a totally symmetric embedding $\fl4\to\fl\lambda$. In particular, Lemma~\ref{lemmakeylemma}  implies 
Corollary~\ref{corolnhRm}. 
In this section, with the extensive help of Cz\'edli~\cite{czg-selfdual}, we lift the rank $4$ of $\fl 4$ to all even natural numbers $\kappa\geq 4$ and even to $\aleph_0$.  That is, we are going to prove the following lemma. Remember that $a$, $\ovl a$, $b$ and $\ovl b$ have been defined in \eqref{eqalignedXSRpPtW}.

\begin{lemma}\label{lemmawcMsKlMm} If $\kappa=\aleph_0$ or $\kappa\geq 4$ is an even integer, then for every integer $\lambda=n\geq 3$,  there exists a totally symmetric embedding $\tau_{\kappa\lambda}\colon\fl\kappa\to\fl\lambda$ with the additional property that $\tau_{\kappa\lambda}(\fl\kappa)\subseteq [a,\ovl a,b,\ovl b]$. 
\end{lemma}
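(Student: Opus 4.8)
The plan is to assemble $\tau_{\kappa\lambda}$ from two maps: the free embedding of $\fl 4$ into $\fl\lambda$ coming from the Key Lemma, and an embedding of $\fl\kappa$ into $\fl 4$ produced by the ``diagonal method'' of Cz\'edli~\cite{czg-selfdual}. The point is that $\delta_{\fl\lambda}$ restricts on the image of the first map to a copy of the swapping dual automorphism $\dswap 4$, which is exactly the dual automorphism that the second map respects; so the ``swappings'' cancel.

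\emph{Step 1 (the free copy of $\fl 4$ inside $\fl\lambda$).} By Lemma~\ref{lemmakeylemma}, $\set{a,\ovl a,b,\ovl b}$ freely generates; put $G:=[a,\ovl a,b,\ovl b]$, so $G\cong\fl 4$ and, since $a,\ovl a,b,\ovl b\in\Sl(\lambda)$ and $\Sl(\lambda)$ is a sublattice, $G\subseteq\Sl(\lambda)$. Write $\delta:=\delta_{\fl\lambda}$; as $\delta$ is the natural dual automorphism and hence an involution, $\delta(a)=\ovl a$, $\delta(\ovl a)=a$, $\delta(b)=\ovl b$, $\delta(\ovl b)=b$. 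Thus $\delta$ permutes the free generating set of $G$, so $\delta(G)=G$ and $\restrict\delta G$ is a dual automorphism of $G$ that swaps $a\leftrightarrow\ovl a$ and $b\leftrightarrow\ovl b$. Choosing the isomorphism $\iota\colon\fl 4=\Fl(y_0,y_1,y_2,y_3)\to G$ that maps the free generators onto $a,\ovl a,b,\ovl b$ in the order for which the induced generator swap agrees with that of $\dswap 4$, uniqueness of a dual automorphism with prescribed action on the free generators (Remark~\ref{remcspRtlrS}, cf.\ \eqref{eqpbxdnwlmghTrzzdtnbQ}) gives $\restrict\delta G\circ\iota=\iota\circ\dswap 4$, i.e.\ $\iota$ transports $\dswap 4$ to $\restrict\delta G$. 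Note that $\restrict\delta G$ is genuinely a copy of $\dswap 4$ and not the natural dual automorphism of $\fl 4$, precisely because it moves the free generators.

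\emph{Step 2 (the diagonal method).} Quoting Cz\'edli~\cite{czg-selfdual}: for $\kappa=\aleph_0$ or $\kappa\geq 4$ an even integer, $\fl 4$ contains a sublattice $S_0$ with $S_0\cong\fl\kappa$ that is closed with respect to $\dswap 4$. (The parity hypothesis on $\kappa$ is what the diagonal method produces, since $\dswap 4$ induces a fixed-point-free involution on a suitable free generating set of $S_0$.)

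\emph{Step 3 (putting it together).} Fix an isomorphism $\rho\colon\fl\kappa\to S_0$ and set $\tau_{\kappa\lambda}:=\iota\circ\rho\colon\fl\kappa\to\fl\lambda$, a composite of embeddings and hence an embedding. Its range is $\iota(S_0)\subseteq\iota(\fl 4)=G=[a,\ovl a,b,\ovl b]$, which yields the additional property in the statement and, because $G\subseteq\Sl(\lambda)$, shows that $\tau_{\kappa\lambda}(\fl\kappa)$ is element-wise symmetric in $\fl\lambda$. For selfdual positioning, applying $\restrict\delta G\circ\iota=\iota\circ\dswap 4$ to the subset $S_0\subseteq\fl 4$ and using that $S_0$ is $\dswap 4$-closed,
\[
\delta\bigl(\tau_{\kappa\lambda}(\fl\kappa)\bigr)=\delta\bigl(\iota(S_0)\bigr)=\iota\bigl(\dswap 4(S_0)\bigr)=\iota(S_0)=\tau_{\kappa\lambda}(\fl\kappa).
\]
Hence $\tau_{\kappa\lambda}(\fl\kappa)$ is a selfdually positioned, element-wise symmetric sublattice of $\fl\lambda$, so $\tau_{\kappa\lambda}$ is totally symmetric, as required. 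The only genuine work beyond bookkeeping is extracting the precise statement used in Step 2 from \cite{czg-selfdual} and justifying Step 1's identification of $\restrict\delta G$ with $\dswap 4$; everything else is the routine composition above.
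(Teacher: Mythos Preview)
Your proof is correct and follows essentially the same route as the paper: use the Key Lemma to realize $\fl 4$ as $G=[a,\ovl a,b,\ovl b]\subseteq\Sl(\lambda)$, identify $\restrict{\delta_{\fl\lambda}}G$ with the swapping dual automorphism $\dswap 4$, and then import from \cite{czg-selfdual} a $\dswap4$-closed copy of $\fl\kappa$ inside $\fl 4$. The paper does this slightly more concretely---it identifies $\fl 4$ with $G$ rather than using an isomorphism $\iota$, and it spells out the specific free generators $x_1^{1+i},x_2^{1+i}$ from \cite[Lemma~4.1]{czg-selfdual} together with the verification that the $\delta$ appearing there coincides with $\dswap4$---but the structure of the argument is the same, and you correctly flag that extracting the right statement from \cite{czg-selfdual} is the one nontrivial step.
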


\begin{proof}
First, in order to make our references to Cz\'edli~\cite{czg-selfdual} convenient, we need to deal with the notation. Let $(y_1,y_2,y_3,y_4):=(a,\ovl a,b,\ovl b)\in \Sl(n)^4$. It follows from (the Key) Lemma~\ref{lemmakeylemma} that $\set{y_1,\dots, y_4}$ freely generates. This allows us to write $\fl 4=\Fl(y_1,\dots,y_4)$ in the present proof, so $\fl 4$  is a sublattice of $\Sl(n)$. 
Since $\set{y_1,\dots,y_4}=\set{a,\ovl a,b,\ovl b}$ is closed with respect to $\delta_{\fl n}$ defined in \eqref{eqpbxnatduAl},  $\fl 4=\Fl(y_1,\dots,y_4)$ is selfdually positioned in $\fl n$. 
Hence, 
\begin{equation}
\text{the restriction $\dswap4:=\restrict{\delta_{\fl n}}{\fl 4}$  of $\delta_{\fl n}$ to $\fl 4$,}
\label{eqtxtdfrrStrdmgSzRmnG}
\end{equation}
is a dual automorphism of $\fl 4$. Note the rule that
\begin{equation}
\dswap4(y_1)=y_2,\quad \dswap4(y_2)=y_1,\quad\dswap4(y_3)=y_4,\quad\dswap4(y_4)=y_3.
\label{eqBsthRlmTlBlDgL}
\end{equation}
We do not need the exact definition of the lattice terms
$x_1^{1+i}$ and $x_2^{1+i}$ given in  \cite[Section 4]{czg-selfdual}, but we have to recall some of their properties. For $i\in\nfromz$, 
$x_1^{1+i}$ and $x_2^{1+i}$ are lattice terms over $\set{y_1,\dots,y_4}$, that is, they belong to $\fl 4$. For brevity, we  denote $\set{x_1^{1+i}: 2i<\kappa}\cup \set{x_2^{1+i}: 2i<\kappa}$  by $\set{x_1^{1+i}, x_2^{1+i}: 2i<\kappa}$. By \cite[Lemma 4.1]{czg-selfdual}, $\set{x_1^{1+i}, x_2^{1+i}: 2i<\kappa}$ freely generates a sublattice of $\fl 4$. Hence, in the present proof, we can write that 
\[\fl\kappa=\Fl(x_1^{1+i}, x_2^{1+i}: 2i<\kappa)=[x_1^{1+i}, x_2^{1+i}: 2i<\kappa]_{\Fl(4) }.
\]
For example, $\fl 6=\Fl(x_1^{1+i}, x_2^{1+i}: 2i<6)=\Fl(x_1^1,x_2^1, x_1^2,x_2^2, x_1^3,x_2^3)$. It is important that 6 and, in general, $\kappa$ is even or $\aleph_0$, because for an odd integer $\kappa\in\nfromo$, 
$\Fl(x_1^{1+i}, x_2^{1+i}: 2i<\kappa)$ would be $\Fl(\kappa+1)$ rather than $\fl\kappa$.

This paragraph is to tailor the second half of \cite[Lemma 4.1]{czg-selfdual} to the present situation;  the reader may want to skip over it. 
It is irrelevant for us what $a$ and $b$ denote in  \cite{czg-selfdual}; they are not the same as here. It is also irrelevant that $\fl 4=\Fl(y_1,\dots, y_4)$ is embedded into $\Fl(x,y,z)$ in \cite{czg-selfdual} but 
into $\fl n$ here; these two embeddings are different even for $n=3$.  
Using the first page, Lemma 2.1(B), and the second line of Section 4 of  \cite{czg-selfdual},  we obtain from  \cite{czg-selfdual}  that $\delta=\restrict{\delta_{\Fl(x,y,z)}}{\Fl(y_1,\dots,y_4)}$ in \cite[Lemma 4.1]{czg-selfdual} denotes a dual automorphism of 
$\fl4=\Fl(y_1,\dots,y_4)$ such that \eqref{eqBsthRlmTlBlDgL} holds also for $\delta$. Therefore, $\delta$ in  \cite[Lemma 4.1]{czg-selfdual} is the same as 
$\dswap4$ here, and the second half of  \cite[Lemma 4.1]{czg-selfdual} asserts that 
$\dswap4(x_1^i)=x_2^i$ and $\dswap4(x_2^i)=x_1^i$ for all meaningful $i$. 

So,  \cite[Lemma 4.1]{czg-selfdual} yields that 
$\dswap4(x_1^i)=x_2^i$ and $\dswap4(x_2^i)=x_1^i$ hold for all $i$ such that $2i<\kappa$. Therefore,
since $\dswap4$ is the restriction of $\delta_{\fl n}$ by \eqref{eqtxtdfrrStrdmgSzRmnG}, the set $\set{(x_1^{1+i}, x_2^{1+i}: 2i<\kappa}$ is selfdually positioned in $\fl n$. Consequently, so is the sublattice $\fl\kappa=[x_1^{1+i}, x_2^{1+i}: 2i<\kappa]_{\fl n}$. 
Finally, $\fl\kappa\subseteq [y_1,\dots, y_4]_{\fl n}=[a,\ovl a,b,\ovl b]_{\fl n}\subseteq\Sl(n)$. Hence, the inclusion map $\tau_{\kappa\lambda}\colon\fl\kappa\to \fl n=\fl\lambda$ is a totally symmetric embedding and  $\tau_{\kappa\lambda}(\fl\kappa)\subseteq [a,\ovl a,b,\ovl b]$.
This completes the proof of Lemma~\ref{lemmawcMsKlMm}.
\end{proof}

\section{The rest of the proofs}\label{sectionRestPrf}
\begin{proof}[Proof of Theorem \ref{thmSD}]
If $\fl\lambda$ has a selfdually positioned sublattice isomorphic to $\fl\kappa$, then 
$\max\set{\kappa,\aleph_0} =|\fl\kappa|\leq |\fl\lambda|=
\max\set{\lambda,\aleph_0}$,
as required. Conversely, assume that $\max\set{\kappa,\aleph_0}\leq \max\set{\lambda,\aleph_0}$. To specify the free generators, we let 
$\fl\kappa=\Fl(y_i:i < \kappa)$ and $\fl\lambda=\Fl(x_i:i<\lambda)$. 
We can assume that $\kappa>\lambda$, since otherwise the sublattice
$[x_i:i<\kappa]$ generated by $\set{x_i:i<\kappa}$ in $\fl\lambda$ is selfdually positioned and it is isomorphic to $\fl\kappa$. 
The inequality $\kappa>\lambda$ together with $\max\set{\kappa,\aleph_0}\leq \max\set{\lambda,\aleph_0}$ and $3\leq\kappa$ give that $4\leq \kappa\leq \aleph_0$ and $\lambda\in\nfromo$.
We can assume that $\kappa=2k+1\geq 4$ is an odd integer, since otherwise Lemma~\ref{lemmawcMsKlMm} gives a totally symmetric embedding $\phi\colon\fl\kappa\to\fl\lambda$ and $\phi(\fl\kappa)$ does the job.  Again by Lemma~\ref{lemmawcMsKlMm}, we can pick a  $2k$-element subset $C=\set{c_i:i<2k}$ of the sublattice $[a,\ovl a, b,\ovl b]$ of $\fl\lambda$ such that 
the sublattice $[C]$ is selfdually positioned in $\fl\lambda$ and $[C]$ is freely generated by $C=\set{c_i:i<2k}$.  
Since the natural dual automorphism $\delta_{\fl\lambda}$ from \eqref{eqpbxnatduAl} preserves double irreducibility in $[C]$, it follows, after slight notational changes, from
\eqref{aligntxtDfkrZntNshG} and \eqref{eqtxtFpNslmnTwszpThG} that the set $C$ itself is selfdually positioned, that is, $\delta_{\fl\lambda}(C)=C$. Let $D=C\cup\set{x_0}$. Since $\delta_{\fl\lambda}(x_0)=x_0$ by definition, $\delta_{\fl\lambda}(D)=D$. This yields that $[D]=[D]_{\fl\lambda}$, the sublattice generated by $D$, is selfdually positioned, that is, 
$\delta_{\fl\lambda}([D])=[D]$. Therefore, since $|D|=2k+1=\kappa$, 
we need to show only that $D$ freely generates. By duality and Lemma~\ref{lemmafjnFrGSSt}, it suffices to exclude that
\begin{align}
x_0&\leq c_0 \vee c_1 \vee \dots \vee c_{2k-1}, \quad \text{ or }\label{alignXclkCdtGa}\\
c_j&\leq x_0 \vee \bigvee _{i\in [2k)\setminus\set j}c_i\quad
\text{ for some } j\in[2k).\label{alignXclkCdtGb}
\end{align}
We know from Lemma~\ref{lemmakeylemma} that the sublattice $S:=[a,\ovl a, b,\ovl b,x_0]$ of $\fl n$ is freely generated by the set $\set{a,\ovl a, b,\ovl b,x_0}$. Therefore, the self-maps
\[
\xi_1:=\begin{pmatrix}
a& \ovl a& b & \ovl b & x_0\cr
0_S& 0_S& 0_S & 0_S & 1_S
\end{pmatrix}
\quad\text{and}\quad
\xi_2:=\begin{pmatrix}
a& \ovl a& b & \ovl b & x_0\cr
a& \ovl a& b & \ovl b &0_S
\end{pmatrix}
\]
extend to endomorphisms $\what\xi_1\colon S\to S$ and 
$\what\xi_2\colon S\to S$, respectively. 
Using the inclusion $C\subseteq [a,\ovl a, b,\ovl b]$, we obtain that  $\xi_1([a,\ovl a, b,\ovl b])=\set{0_S}$. Taking the equality
$\xi_1(x_0)=1_S$ also into account,  we obtain that the endomorphism $\what \xi_1$ does not preserve inequality \eqref{alignXclkCdtGa}. Hence, \eqref{alignXclkCdtGa} fails, as required. Next, suppose that  \eqref{alignXclkCdtGb} holds. 
Using $C\subseteq [a,\ovl a, b,\ovl b]$, it follows that the restriction of  $\what\xi_2$
to $[a,\ovl a, b,\ovl b]$ is the identity map. Therefore, since $\what\xi_2$ is order-preserving, its application to  \eqref{alignXclkCdtGb} yields that $c_j\leq \bigvee_{i\in [2k)\setminus\set j}c_i$. But this is a contradiction since $C$ freely generates, and we conclude that  \eqref{alignXclkCdtGb} fails, as required. 
\end{proof}

\begin{proof}[Proof of Corollary~\ref{corolDA}]
In order to prove the implication  \eqref{corolDAb} $\Rightarrow$ \eqref{corolDAa}, 
assume that \eqref{corolDAb} holds. We can also assume that $\kappa\neq\lambda$ since otherwise the identity map  $\fl\kappa\to\fl\kappa=\fl\lambda$ does the job.  As it is pointed out right after \eqref{eqpbxdnwlmghTrzzdtnbQ},  $[\Daut(\fl\lambda):\Aut(\fl\lambda)]=2$. Hence, with  $\delta_{\fl\lambda}$ from \eqref{eqpbxnatduAl},
\begin{equation}
\Daut(\fl\lambda)=\Aut(\fl\lambda)\cup\set{\delta_{\fl\lambda}\circ\phi: \phi\in \Aut(\fl\lambda)}.
\label{eqkndXsTWsktt}
\end{equation}
Thus, the embedding given by  Lemma~\ref{lemmawcMsKlMm} has a \DAstr-symmetric range. Hence, \eqref{corolDAb} $\Rightarrow$ \eqref{corolDAa}.

Before proving the converse implications, we formulate and verify some observations, some of which will be useful also in the proof of Corollary~\ref{corolaUtsym} later. This is why instead of assuming \DAstr-symmetry, we often assume less, the usual symmetry (with respect to automorphisms).
Since $\Aut(\fl X)$ acts transitively on the set $X$ of free generators, it follows  trivially that
\begin{equation}
\parbox{6.2cm}{if $S$ is a symmetric sublattice of $\Fl(X)$ such that $S\cap X\neq\emptyset$, then $S=\fl\lambda$.}
\label{eqpbxXhbBnKrCsdJh}
\end{equation}
As a straightforward consequence of \eqref{aligntxtDfkrZntNshG}, observe that 
\begin{equation}
\parbox{7.5cm}{if $\phi\colon \fl\kappa\to\fl\lambda$ is an arbitrary embedding and $S:=\phi(\fl\kappa)$, then $|\Dirr S|=\kappa$.}
\label{eqpbxhmhzlRfgldzlvs}
\end{equation}
Since automorphisms and dual automorphisms preserve double primeness, we obtain the following observation.
\begin{equation}
\parbox{9cm}{Let $S$ be a symmetric sublattice of $\fl\lambda$; then $\restrict\tau{\Dirr S}\colon \Dirr S\to\Dirr S$ is a bijective map for every $\tau\in \Aut(\fl\lambda)$. If, in addition, $S$ is \DAstr-symmetric, then the same holds even for every $\tau\in\Daut(\fl\lambda)$.} \label{eqtxtlpFplmsSnVlTNm}
\end{equation}

For $u\in\fl\lambda$, the set $\set{\tau(u): \tau\in\Aut(\fl\lambda)}$ will be called the \emph{orbit} of $u$ (with respect to automorphisms). We are going to prove the following property of orbits. 
\begin{equation}
\parbox{8.6cm}{If $\lambda\geq \aleph_0$, $S$ is a symmetric sublattice of $\fl\lambda$, and $u\in S$, then $|S|=\lambda= |\set{\tau(u): \tau\in\Aut(\fl\lambda)}|$.}
\label{pbxdznGhHmQprnhGr}
\end{equation}
In order to show \eqref{pbxdznGhHmQprnhGr}, let $S$ be a symmetric sublattice of $\fl\lambda=\fl X$, where $|X|=\lambda$, and let $u\in S$. Obviously, $|S|\leq |\fl\lambda|=\lambda$. 
 It is clear by \eqref{eqpbxtrTtRlMnts} that there is a finite subset $Y\subseteq X$ such that $u$ is in the sublattice $[Y]$ generated by $Y$. By the rudiments of cardinal arithmetics, there is a family $\set{\pi_i : i<\lambda}$ of permutations of $X$ such that 
$\phi_i(Y)\cap\phi_j(Y)=\emptyset$ for $i\neq j$. Each of these $\pi_i$ extends to an automorphism $\ketparamaut \pi i$ of $\fl\lambda$. If $i\neq j$, then the map 
\[
X\to \alg 2, \quad x\mapsto
\begin{cases}
1,&\text{if }x\in \pi_i(Y),\cr
0,&\text{if }x\notin \pi_i(Y),\text{ in particular, if }x\in\pi_j(Y)\cr
\end{cases}
\]
extends to a lattice homomorphism $\fl\lambda\to\alg 2$. 
Since this homomorphism maps $\ketparamaut \pi i(u)\in [\pi_i(Y)]$ and 
 $\ketparamaut \pi j(u)\in [\pi_j(Y)]$ to 1 and 0, respectively, we obtain that   $\ketparamaut \pi i(u)\neq \ketparamaut \pi j(u)$. Furthermore, $\set{\tau(
u): \tau\in\Aut(\fl\lambda)}\subseteq S$ since $S$ is a symmetric sublattice. Hence, 
\[\lambda=|\set{\pi_i(u): i<\lambda}|\leq 
|\set{\tau(u):  \tau\in\Aut(\fl\lambda)}| \leq|S|\leq \lambda,
\]
which proves \eqref{pbxdznGhHmQprnhGr}. 
We also need the following consequence of \eqref{pbxdznGhHmQprnhGr}.
\begin{equation}
\parbox{6.6cm}{If $\lambda\geq \aleph_0$, $S$ is a symmetric sublattice of $\fl\lambda$, and $\Dirr S\neq \emptyset$, then $|\Dirr S|=\lambda$.}
\label{pbxdznthHmQpfcskmpdLbnDGr}
\end{equation}
In order to show this, let $u\in\Dirr S$. Since $S$ is symmetric, the restriction $\restrict\tau S$ of an automorphism $\tau\in\Aut(\fl\lambda)$ to $S$ is an automorphism of $S$. Hence, $\tau(u)=\restrict\tau S(u)$ also belongs to $\Dirr S$. Thus, we conclude  from \eqref{pbxdznGhHmQprnhGr} that $\lambda\leq |\Dirr S|\leq |\fl\lambda|=\lambda$, implying the validity of \eqref{pbxdznthHmQpfcskmpdLbnDGr}.
In the observation below, $\delta=\delta_{\fl\lambda}$ is the natural dual automorphism introduced in \eqref{eqpbxnatduAl}.
An \emph{involution} on a set $Y$ is a map $Y\to Y$ whose square is the identity map on $Y$. 
\begin{equation}
\parbox{9cm}{If $S$ is a \DAstr-symmetric sublattice of $\fl\lambda$, then the restriction $\restrict\delta{\Dirr S}$
of $\delta$ to $\Dirr S$ is an involution on $\Dirr S$.}
\label{eqpbxhmrKpLkBtNnS}
\end{equation}
 Since every restriction of an involution is again an involution, \eqref{eqpbxhmrKpLkBtNnS} follows immediately from \eqref{eqtxtlpFplmsSnVlTNm}. 
Next, we are going to prove that 
\begin{equation}
\parbox{9cm}{if $\kappa\neq\lambda$ and there is an embedding $\fl\kappa\to\fl\lambda$ with \DAstr-symmetric range, then 
$\kappa$ is not an odd integer.}
\label{eqpbxdzBmztpLsjflS}
\end{equation}
Suppose the contrary,  and  for an odd $\kappa\in\nfromo$, let 
$\phi\colon \fl\kappa\to\fl\lambda$ with range $S:=\phi(\fl\kappa)$ 
witness the failure of \eqref{eqpbxdzBmztpLsjflS}. We know from \eqref{eqpbxhmrKpLkBtNnS}  that $\restrict\delta{\Dirr S}$ is an involution  on $\Dirr S$. If $\restrict\delta{\Dirr S}$ has a fixed point $u\in \Dirr S$, then $u= \restrict\delta{\Dirr S}(u)=\delta(u)$ is one of the free generators of $\fl\lambda$ by \eqref{eqtxtFpNslmnTwszpThG}, whereby \eqref{eqpbxXhbBnKrCsdJh} gives the equality in $\fl\kappa\cong S= \fl\lambda$, which implies $\kappa=\lambda$ by \eqref{aligntxtDfkrZntNshG}, contradicting $\kappa\neq \lambda$. Hence, $\restrict\delta{\Dirr S}$ has no fixed point. By \eqref{eqpbxhmhzlRfgldzlvs}, this fixed-point-free involution acts on a $\kappa$-element set. Thus, $\kappa$ is not an odd integer, proving \eqref{eqpbxdzBmztpLsjflS}.

Now, armed with \eqref{pbxdznGhHmQprnhGr}, \eqref{pbxdznthHmQpfcskmpdLbnDGr}, and \eqref{eqpbxdzBmztpLsjflS},   we are in the position to prove that  
 \eqref{corolDAa} implies  \eqref{corolDAb}. Assume that \eqref{corolDAa} holds, and let $\phi\colon \fl\kappa\to\fl\lambda$ be an embedding with \DAstr-symmetric
range $S:=\phi(\fl\kappa)$. We can also assume that $\kappa\neq\lambda$ since there is nothing to prove otherwise.
Since $\phi$ is an embedding, $|\fl\kappa|\leq |\fl\lambda|$. There are two cases, depending on $\lambda$. First, if $\lambda<\aleph_0$, then 
$\kappa$ is not an odd integer by \eqref{eqpbxdzBmztpLsjflS} and, furthermore, $|\fl\kappa|\leq |\fl\lambda|=\aleph_0$ gives that $\kappa\leq \aleph_0$. Hence, \eqref{corolDAb} holds in this case.  Second, if $\lambda\geq\aleph_0$, then 
\begin{equation}
\kappa \overset{ \eqref{aligntxtDfkrZntNshG} } =|\Dirr{\fl\kappa}| = |\Dirr S| \overset{\eqref{pbxdznthHmQpfcskmpdLbnDGr}}= \lambda,
\label{eqcsRpfLnTcC}
\end{equation}
and  \eqref{corolDAb}  holds again.
The proof of  Corollary~\ref{corolDA} is complete.
\end{proof}

\begin{proof}[Proof of Theorem \ref{thmmain}]
It suffices to prove part \eqref{thmmaina}, since it implies 
 part \eqref{thmmainb}.
Let $3\leq \lambda\in\nfromo$, and assume that $\kappa=\aleph_0$ or $3\leq \kappa\in\nfromo$ is even. Then there exists a totally symmetric embedding from $\fl\kappa$ to $\fl\lambda$ by Lemma~\ref{lemmawcMsKlMm}.

Conversely, assume that $3\leq\kappa$, 
$3\leq\lambda$, and there exists a totally symmetric embedding $\phi\colon \fl\kappa\to\fl\lambda$. Let $S=\phi(\fl\kappa)$ denote the range of $\phi$; it consists of some symmetric elements of $\fl\lambda$. 
It follows that $\lambda<\aleph_0$, because otherwise there would be no symmetric element in $\fl\lambda$. Thus, we have also that $\kappa\leq \aleph_0$, because  $\kappa\leq |\fl\kappa|\leq |\fl\lambda|=\aleph_0$.
Since $S$ is invariant under the natural dual automorphism $\delta:=\delta_{\fl\lambda}$ and it is symmetric, even element-wise symmetric, \eqref{eqkndXsTWsktt} shows that $S$ is \DAstr-symmetric.
By  \eqref{eqpbxhmrKpLkBtNnS}, $\restrict\delta{\Dirr S}$ is an involution  on $\Dirr S$. 
No free generator of $\fl\lambda$ is a symmetric element of $\fl\lambda$, whereby $S$ 
is disjoint from the set of free generators of $\fl\lambda$. If  $\restrict\delta{\Dirr S}$ had a fixed point $u$, then $u$ would be a fixed point of $\delta$, so \eqref{eqtxtFpNslmnTwszpThG} would imply that $u\in \Dirr S\subseteq S$ is a free generator of $\fl\lambda$, contradicting the above-mentioned disjointness. Thus, $\restrict\delta{\Dirr S}$ has no  fixed point. 
By \eqref{eqpbxhmhzlRfgldzlvs}, the fixed-point-free involution $\restrict\delta{\Dirr S}$ acts on the $\kappa$-element set $\Dirr S$, and we conclude that $\kappa$ is not an odd integer. That is, 
$\kappa=\aleph_0$ or $\kappa\in\nfromo$ is even,  completing the proof.
\end{proof}

\begin{proof}[Proof of Corollary~\ref{corolaUtsym}]
In order to prove the ``if'' part, we can assume that $\kappa\neq\lambda$ since otherwise the identity map of $\fl\kappa$ is a required embedding. So $3\leq\lambda\in \nfromo$ and $3\leq \kappa\leq \aleph_0$. By Theorem~\ref{thmmain}\eqref{thmmaina}, $\fl\lambda$ has an element-wise symmetric sublattice $S$ such that $S\cong\fl{\aleph_0}$.
Since $\kappa\leq \aleph_0$, $\fl\lambda$ has also a sublattice $S'$ such that $\fl\kappa\cong S'$. Clearly, any isomorphism from $\fl\kappa \to S'$ is an embedding of $\fl\kappa$ into $\fl\lambda$ with symmetric range; in fact, with element-wise symmetric range. This proves the ``if'' part.

In order to prove the ``only if'' part, assume that there is an embedding
$\phi\colon\fl\kappa\to \fl\lambda$ with symmetric range $S$.
Clearly,  $|\fl\kappa|\leq |\fl\lambda|$.  
Depending on $\lambda$, there are two cases to consider. 
First, if $\lambda<\aleph_0$, then $|\fl\kappa|\leq |\fl\lambda|=\aleph_0$ yields that  $\kappa\leq \aleph_0$, as required. Second, if $\lambda\geq\aleph_0$, then \eqref{eqcsRpfLnTcC} applies and $\kappa=\lambda$, again as required.
The proof is complete. 
\end{proof}

\section{A computer program and its background}
\label{sectcompProgr}

\subsection*{Historical background}
There are various known algorithms to solve the word problem of free lattices and that of finitely presented lattices. They are  discussed in Sections 8 and 9 of Chapter XI of the monograph Freese, Je\v zek, and Nation~\cite{fjnmonograph}; see also Dean~\cite{dean-alg}, Evans~\cite{evans-alg}, McKinsey~\cite{McKinseyalg}, and Skolem~\cite{skolemalg} for the original papers. In addition to this list, there is an additional algorithm given in Cz\'edli~\cite{czg-latwp}. 
We know from \cite{fjnmonograph} that the  algorithms given by Skolem, Freese, and Herrmann run in polynomial time;  so does the one given in \cite{czg-latwp}. However, it is only Whitman's algorithm  with the modifications explained in \cite{fjnmonograph} that is fast enough for our purposes.

\subsection*{A new computer program}
The first author has developed a  Dev-Pascal 1.9.2 (Freepascal) program for the word problem of free lattices. This problem is  based on the Freese-Whitman algorithm, as it is given in  Freese, Je\v zek, and Nation~\cite{fjnmonograph}.  The program runs in Windows environment (tested only under Windows 10), and it can be downloaded from the  author's website.  The program takes its input from a text file; several  sample input files are also donwloadable. We used this program on our personal computer with IntelCore i5-4440 CPU, 3.10 GHz, and 8.00 GB RAM. 

\subsection*{Results achieved with the computer program}
First, we used the program to give alternative proofs. In particular, we used~it   
\begin{equation}
\text{to prove the (Key) Lemma~\ref{lemmakeylemma} for $n=4$.}
\label{eqtxtPrPrfvH}
\end{equation}
Also, we used the program to prove that 
\begin{equation}
\parbox{9.2cm}{for $n=3$,
the Key Lemma remains valid if we replace $m_1$, $m_2$, $m_3$ and $m_4$ by $m_5$, $m_7$, $m_8$, and $m_9$, respectively;}
\label{eqpbxPrPskGfvhT}
\end{equation}
this gives an alternative proof of Corollary~\ref{corolnhRm}. By the paragraph preceding \eqref{eqdzbnTpQrgNmGlVmh}, it would not be difficult to show that the stipulation $n=3$ can be omitted from \eqref{eqpbxPrPskGfvhT}, but this or a similar strengthening of  \eqref{eqpbxPrPskGfvhT} is not pursued.

In addition to reaffirming some results from the previous sections, we could use the program to  find an entirely new construction to prove Corollary~\ref{corolnhRm}. In order to describe it, 
we use the notation introduced in Remark~\ref{remcspRtlrS} to define a join-homomorphism $\jhom\colon \fl 3\to \Sl(3)$ and a meet-homomorphism  $\mhom\colon \fl 3\to \Sl(3)$ by the rules
\begin{equation}
\jhom(u):=\bigvee_{\sigma\in \Sym_3} \aut\sigma(u)\quad\text{and}\quad
\mhom(u):=\bigwedge_{\sigma\in \Sym_3} \aut\sigma(u).
\end{equation}
In order to ease the notation, we will write $x$, $y$, and $z$ instead of $x_0$, $x_1$, and $x_2$, respectively.
Note that the program recognizes (appropriate commands for) $\jhom$ and $\mhom$ in input files. 
Take the following ternary terms, that is, elements of $\fl 3=\Fl(x,y,z)$.
\allowdisplaybreaks{
\begin{align}
a_0&= \jhom\Bigl(\bigl(((x\vee y)\wedge z)\vee y\bigr) \wedge  \bigl(((y\vee x)\wedge z)\vee x\bigr)\Bigr), \cr
a'&=  \mhom\Bigl(\bigl(((a_0\wedge x)\vee y)\wedge z \bigr)\vee \bigl(((z\wedge x)\vee y)\wedge a_0\bigr)\Bigr),\,\,\text{ and}
\label{eqneewa} \\
b'& = \mhom\Bigl(\bigl((((x\vee y)\wedge (x\vee z))\vee a')\wedge x\bigr) \vee  \bigl(((x\wedge a_0)\vee y)\wedge z\bigr)\Bigl).  \label{eqneewb}
\end{align}}%
With $a'$ and $b'$ from \eqref{eqneewa} and \eqref{eqneewb} and their duals, $\ovl {a'}$ and $\ovl {b'}$, the program proved that 
\begin{equation}
\text{ $\set{a', \ovl{ a'}, b, \ovl {b'}}$ freely generates a sublattice of $\Fl(x,y,z)$,}
\label{eqtxthtRmspRfshlQBt}
\end{equation}
which obviously implies Corollary~\ref{corolnhRm}. Note that for each of \eqref{eqtxtPrPrfvH}, \eqref{eqpbxPrPskGfvhT}, and \eqref{eqtxthtRmspRfshlQBt}, the program ran less than a millisecond on our computer.

Finally, for a lattice term $t$, we define the \emph{total number} $\tno(t)$ of \emph{variables} of $t$ by induction as follows: $\tno(t)=1$ if $t$ is a variable and 
\[\tno(t_1\vee t_2) = \tno(t_1\wedge t_2) = \tno(t_1)+\tno(t_2).
\]
Note that, say, $x=x\wedge(x\vee y)$ in $\Fl(x,y,z)$ but $\tno(x) =1$ is distinct from $\tno(x\wedge(x\vee y))=3$. Hence, as opposed to what \eqref{eqpbxtrTtRlMnts} suggests, we do not define $\tno$ for the elements of $\Fl(x,y,z)$.
For a set $\set{t_1,\dots, t_k}$  of terms,  let 
$\tno(\set{t_1,\dots, t_k})=\tno(t_1)+\dots+\tno(t_k)
$. 
Table~\ref{eqtablE} shows how the function $\tno$  compares the \emph{terms} describing the free generating set given in \eqref{eqshhbkvlhkkvlb} for $n=3$ and those given in \eqref{eqneewa} and  \eqref{eqtxthtRmspRfshlQBt}.
Another difference between  \eqref{eqshhbkvlhkkvlb}  and 
 \eqref{eqtxthtRmspRfshlQBt} is that,  as opposed to the set $\set{a,\ovl a, b,\ovl b, x_0}$ from (the Key) Lemma~\ref{lemmakeylemma}, the program shows that $\set{ a', \ovl{ a'}, b, \ovl {b'},x}$ does \emph{not} generate freely.

\begin{table}
\begin{equation}
\lower  0.6 cm
\vbox{\tabskip=0pt\offinterlineskip
\halign{\strut #&\vrule#\tabskip=0pt plus 0pt&
#\hfill&\vrule\vrule\vrule#&
\hfill#&\vrule#&
\hfill#&\vrule#&
\hfill#&\vrule#&
\hfill#&\vrule#&
\hfill#&\vrule#&
\hfill#&\vrule#&
\hfill#&\vrule#&
\hfill#&\vrule#&
\hfill#&\vrule#&
\hfill#&\vrule#&
\hfill#&\vrule#&
#\hfill\vrule\vrule\cr
\vonal
&&\hfill    && 1st generator\kr&&\kr{}2nd generator\kr&& $\phantom{\int_{I}^{I}}\kern-9pt \tno($generating set$)$&\cr
\vonal\vonal\vonal
&&\kr{}\eqref{eqshhbkvlhkkvlb}$_{n=3}$\kr&&$\phantom{\int_{I}^{I}}\kern-9pt\tno(a)=108$\kr&&$\tno(b)=228$\kr&&$\tno(\set{a,\ovl a, b,\ovl b})=672$\kr&\cr
\vonal
&&\kr{}\eqref{eqtxthtRmspRfshlQBt}\kr&&$\phantom{\int_{I}^{I}}\kern-9pt\tno(a')=612$\kr&&\kr$\tno(b')=4008$\kr&\kr&$\tno(\set{a',\ovl{a'}, b',\ovl{b'}})=9240$\kr&\cr
\vonal}} 
\label{eqtablE}
\end{equation}
\end{table}

\end{document}